\newtheorem{theorem}{Theorem}
\newtheorem{lemma}{Lemma}
\newtheorem{proposition}{Proposition}
\newtheorem{corollary}{Corollary}
\newtheorem{claim}{Claim}
 \newtheorem{thm}{Theorem}[section]
 \newtheorem{lem}[thm]{Lemma}
 \theoremstyle{definition}
 \theoremstyle{remark}
 \newtheorem{rem}[thm]{Remark}
 \numberwithin{equation}{section}
\newcommand{\vertiii}[1]{{\left\vert\kern-0.25ex\left\vert\kern-0.25ex\left\vert #1
    \right\vert\kern-0.25ex\right\vert\kern-0.25ex\right\vert}}
\newcommand{\f}[2]{\frac{#1}{#2}}
\newcommand{\al}{\alpha}
\newcommand{\be}{\beta}
\newcommand{\ga}{\gamma}
\newcommand{\Ga}{\Gamma}
\newcommand{\de}{\delta}
\newcommand{\De}{\Delta}
\newcommand{\eps}{\epsilon}
\newcommand{\p}{\partial}
\newcommand{\beq}{\begin{equation}}
\newcommand{\eeq}{\end{equation}}
\newcommand{\beqna}{\begin{eqnarray*}}
\newcommand{\eeqna}{\end{eqnarray*}}
\newcommand{\beqn}{\begin{equation*}}
\newcommand{\eeqn}{\end{equation*}}
\newcommand{\bp}{\begin{proof}}
\newcommand{\bprop}{\begin{proposition}}
\newcommand{\eprop}{\end{proposition}}
\newcommand{\bt}{\begin{theorem}}
\newcommand{\et}{\end{theorem}}
\newcommand{\bex}{\begin{Example}}
\newcommand{\eex}{\end{Example}}
\newcommand{\bc}{\begin{corollary}}
\newcommand{\ec}{\end{corollary}}
\newcommand{\bcl}{\begin{claim}}
\newcommand{\ecl}{\end{claim}}
\newcommand{\bl}{\begin{lemma}}
\newcommand{\el}{\end{lemma}}
\newcommand{\ep}{\epsilon}
\begin{document}

\title[Well-posedness and asymptotics  of a coordinate-free model]
{Well-posedness and asymptotics  of a coordinate-free model of flame fronts}

\thanks{The first author is grateful to the National Science Foundation for support under grant
DMS-1907684.}

\author{David M. Ambrose \and Fazel Hadadifard \and J. Douglas Wright}
 \address{Drexel University,
 	Department of Mathematics, Philadelphia, PA, 19104 USA}
\email{dma68@drexel.edu}
 \address{Drexel University,
 	Department of Mathematics, Philadelphia, PA,  19104 USA}
 \email{fh352@drexel.edu}
 \address{Drexel University,
 	Department of Mathematics, Philadelphia, PA,  19104 USA}
\email{jdw66@drexel.edu}

\subjclass{Primary 35B40, 35B65}

\keywords{asymptotic behavior,  curved flames}

\date{\today}

\begin{abstract}
	We investigate a coordinate-free model of flame fronts introduced by Frankel and Sivashinsky;
	this model has a parameter $\alpha$ which relates to how unstable the front might be.
	We first prove short-time well-posedness of the coordinate-free model, for any value of $\alpha>0.$
	We then argue that near the threshold $\al \approx 1,$ the solution stays arbitrarily close to the solution of the weakly nonlinear Kuramoto--Sivashinsky (KS) equation, as long as the initial values are close.

\end{abstract}

\maketitle
\small

\section{Introduction}

The Kuramoto-Sivashinsky equation, 
\begin{equation}\label{ff}
f_t+ \f{1}{2} f_x^2+ (\al- 1) f_{x x}+ 4 f_{xxxx}= 0,
\end{equation}
is a weakly nonlinear model for flame fronts \cite{kuramoto}, \cite{sivashinsky}.  
Frankel and Sivashinsky have shown that it
can be formally derived from coordinate-free models \cite{frankelSivashinsky1987} of flame propagation.
In such a coordinate-free model, the normal velocity of the front is specified in
terms of intrinsic geometric information such as curvature and arclength.
One such model put forward by Frankel and Sivashinsky is
%
 \begin{equation}\label{vn}
V_n= 1+ (\al- 1) \kappa+ \bigg(1+ \f{1}{2} \al^2\bigg) \kappa^2+ \bigg(2 \al+ 5 \al^2- \f{1}{3} \al^3\bigg) \kappa^3+ \al^2 (\al+ 3)  \kappa_{ss},
\end{equation}
where $V_{n}$ is the normal velocity of the front, $\kappa$ is the curvature of the front, $s$ is arclength,
and $\alpha$ is a parameter measuring instability of the interface.
Frankel and Sivashinsky perform asymptotic analysis of \eqref{vn}  in the case $\al \approx 1,$ 
finding the simplified coordinate-free model
 \begin{equation}\label{vn1}
V_n= 1+ (\al- 1) \kappa+  4  \kappa_{ss}.
\end{equation}

As discussed by Brauner et al. \cite{braunerEtAl},
there are two primary destabilization mechanisms for premixed gas combustion: hydrodynamic instability (stemming
from thermal expansion of the gas), and thermal-diffusive instability. 
The derivation of the models \eqref{vn} and \eqref{vn1} in \cite{frankelSivashinsky1987} starts from a constant
density flame model, neglecting thermal expansion of the gas.   
Thus these are models exploring thermal-diffusive instability.  
This instability generates cellular structures which may be modeled with
free interface problems \cite{lunardi1}, \cite{lunardi2}, 
and models such as \eqref{vn} and \eqref{vn1} give the velocity of this interface.
In addition to \cite{frankelSivashinsky1987}, coordinate-free models for flame front propagation have been 
developed in \cite{frankel2003} and \cite{frankelSivashinsky1988}. Some analytical studies have been made
of these models, such as studying a quasi-steady problem \cite{roytburd}, \cite{braunerQS}.

The Kuramoto-Sivashinsky equation as given in \eqref{ff} is a form of the more general Kuramoto-Sivashinsky 
equation
\begin{equation}\label{generalKS}
\phi_{t}+\frac{1}{2}|\nabla\phi|^{2}=-c_{1}^{2}\Delta^{2}u-c_{2}^{2}\Delta u,
\end{equation}
in the case of one spatial dimension.  The two linear terms on the right-hand side play different roles, as
the fourth-order term is stabilizing and makes the problem well-posed, while the second-order term is 
destabilizing and can lead to growth of solutions.  The 
interaction of the nonlinear term on the left-hand side with the linear terms leads to rich and highly nontrivial 
dynamics, especially given the lack of a maximum principle for the equation owing to its fourth-order nature.
(We mention that there are versions of the coordinate-free models such as \eqref{vn} available in higher dimension
as well \cite{frankelSivashinsky1988}.)

The Kuramoto-Sivashinsky equation has been widely studied over the years, with global existence of solutions
and stability of the zero solution both established in one spatial dimension \cite{goodman}, \cite{NST}, \cite{TA}.
Detailed estimates have been developed in one spatial dimension for the dependence of the solutions on the size of the
periodic domain \cite{giacomelliOtto}, \cite{otto2}.
Many results for the Kuramoto-Sivashinsky equation in one spatial dimension rely on structure not present
in higher-dimensional problems, especially that an estimate for the $L^{2}$ norm of the first spatial derivative
of the unknown is available.  In higher dimensions this estimate is not available, and there are fewer results.
If the right-hand side of \eqref{generalKS} is modified to instead be $c_{1}^{2}\Delta u+c_{2}^{2}u,$ then a 
maximum principle is available and this structure may be used to find some global existence results \cite{goodman},
\cite{molinet}; the equation is then known instead as the Burgers-Sivashinsky equation.  Larios and 
Yamazaki have
also leveraged this structure for a system which blends features of the Kuramoto-Sivashinsky and Burgers-Sivashinsky
models \cite{larios}.  For the full Kuramoto-Sivashinsky equation in two spatial dimensions, 
Sell and Taboada have proven global existence of solutions in thin domains \cite{sell}, and the first author and
Mazzucato have shown global existence in the absence of linearly growing modes (which happens when the domain
is a sufficiently small torus) \cite{ambroseMazzucato}.  Additional results for the Kuramoto-Sivashinsky equation
on thin domains may be found in \cite{newThinResults} and \cite{molinet2}.

The distinction between known behavior in one spatial dimension and two spatial dimensions indicates
that the structures present in \eqref{ff} used to demonstrate, for example, global existence of solutions are 
perhaps a bit delicate and may not be present in closely related systems.  Indeed, while Frankel and Sivashinsky
have formally derived \eqref{ff} from the coordinate-free models \eqref{vn} and \eqref{vn1}, the authors are unaware
of any analytical theory for these relationships.  While the question of global existence of solutions for the
coordinate-free models remains open, we demonstrate short-time well-posedness here, focusing on
\eqref{vn1} for simplicity, and show rigorously the connection between solutions of \eqref{vn1} and \eqref{ff}.

There is a long history of demonstrating that weakly nonlinear models serve as valid approximations for more
fully nonlinear models; a key example of such work is the proof that the Korteweg-de Vries equation is a good
approximation of the irrotational Euler equations with a free surface \cite{kdv1}, \cite{kdv2}, \cite{kdv3}.  For more 
such works in the theory of water waves, the interested reader might consult the book of Lannes and the 
references therein \cite{lannes}.  While the Kuramoto-Sivashinsky equation is a widely studied weakly nonlinear model
for the propagation of flame fronts, the authors are unaware of any prior proofs of its validity in approximating more
highly nonlinear models.  The result in the literature most similar to the present work appears to be the main result  
of \cite{braunerEtAl}, in which solutions of the Kuramoto-Sivashinsky equation are shown to remain close to 
solutions of another weakly nonlinear model; this weakly nonlinear model is derived from coordinate-free models
similar to \eqref{vn}, but also incorporating temperature effects.

As we will first prove well-posedness of the initial value problem for the
coordinate-free model given by \eqref{vn1}, we first convert it into an evolutionary problem, which requires
setting coordinates.  We do so with an eye towards our approximation theorem, and so not making the most
general possible choice.  As the approximation theorem we prove is for the Kuramoto-Sivashinsky equation, and
the flame front in the Kuramoto-Sivashinsky equation is parameterized as a graph over the horizontal coordinate, $x,$
we thus make this choice of frame for the coordinate-free model.  We make the relevant calculations in the following
Section \ref{formulation}.  

This choice of restricting \eqref{vn1} to the case of a graph over the horizontal coordinate is not 
a limitation on our well-posedness theory; indeed it would be no more difficult to treat \eqref{vn1} for flame
fronts which could have multi-valued height or which might be closed curves.  To treat such scenarios, the 
parameterization of the curve could be set using tangent angle and arclength, as was done for 
interfaces between fluids in the numerical work of Hou, Lowengrub, and Shelley \cite{HLS1}, \cite{HLS2}.
The formulation of Hou, Lowengrub, and Shelley was subsequently used by the first author and collaborators
a number of times to prove well-posedness of initial value problems in interfacial fluid mechanics, for example in
the works \cite{ambroseThesis}, \cite{ambroseSiegel3}, \cite{liuAmbrose1}.  The advantage of the tangent angle
and arclength formulation is that these are naturally related to the curvature, and the curvature of the front
is what appears on the right-hand sides of \eqref{vn} and \eqref{vn1}.  The first author and Akers have implemented
numerical methods to compute the propagation of fronts using
the angle-arclength formulation for the models \eqref{vn} and \eqref{vn1} using further ideas from \cite{HLS1}
in the preprint \cite{akersAmbrose}.

\subsection{Reformulation: Setting coordinates}\label{formulation}
In order to compare the  equations \eqref{ff} with that of  \eqref{vn}, we need to have a more convenient form of the equation \eqref{vn}, i.e. change  the coordinates in \eqref{vn} from $(s, t)$ to $(x, t)$. Clearly we need to rewrite $V_n$ and $\kappa_{ss}$ in the new variables. \\

{\bf{Function $V_n$}:} For any curve $(x(\be, t), y(\be, \tau) )$ we can write the motion as a combination of the normal vector $n= \f{(y_{\be}, -x_{\be})}{|(y_{\be},- x_{\be})|}$ and the tangent vector $T= \f{(x_{\be}, y_{\be})}{|(x_{\be}, y_{\be})|}$. Furthermore, we have the following decomposition of $(x, y)_t$
\begin{equation}
(x,y)_t= V_n \cdot n+ V_{\tau} \cdot T,
\end{equation}
where $V_n$ is as it is defined above, and $V_{\tau}$ is related to the choice of the parameters.   As it is mentioned above, our model covers the case of $(x, y)= (x, f(x))$ and $x_t= 0$ (i.e. $x= \be$), therefore
$$
x_t= \f{ y_x V_n}{\sqrt{1+ y_x^2}}+ \f{V_{\tau}}{\sqrt{1+ y_x^2}}= 0 \Rightarrow V_{\tau}=-  y_x V_n.
$$
We can use the above to find $y_t$. Indeed,
\begin{eqnarray*}
y_t= \f{- V_n}{\sqrt{1+ y_x^2}}+ \f{y_x \cdot V_{\tau}}{\sqrt{1+ y_x^2}}= \f{- (1+ y_x^2) \cdot V_n}{\sqrt{1+ y_x^2}} =- \sqrt{1+ y_x^2} \cdot V_n.
\end{eqnarray*}
This clearly suggests that
\begin{eqnarray}\label{Vn}
V_n= \f{- y_t}{\sqrt{1+ y_x^2}}.
\end{eqnarray}
{\bf{Function $\kappa_{ss}$}:} Note that $\f{d s}{dx}= \sqrt{1+ y_x^2}$, therefore
\begin{eqnarray*}
\f{d \kappa}{d x}= \f{d \kappa}{ds} \cdot \f{d s}{d x} = \f{d \kappa}{ds} \cdot \sqrt{1+ y_x^2}
\end{eqnarray*} 
and consequently,
\begin{eqnarray*}
	\f{d^2 \kappa}{d x^2}&=& \f{d}{dx}\bigg(  \f{d \kappa}{ds} \cdot \sqrt{1+ y_x^2}\bigg)= \f{d^2 \kappa}{d s^2} \cdot \f{d s}{d x} \cdot \sqrt{1+ y_x^2}+ \f{d \kappa}{d s} \cdot \f{y_x y_{xx}}{\sqrt{1+ y_x^2}}\\
	&=& \f{d^2 \kappa}{d s^2}  \cdot (1+ y_x^2)+ \f{d \kappa}{d x} \cdot \f{y_x y_{xx}}{1+ y_x^2}
\end{eqnarray*}
In other words,
\begin{eqnarray}\label{ks}
\f{d^2 \kappa}{d s^2}= \f{1}{1+ y_x^2} \cdot \f{d^2 \kappa}{d x^2}- \f{y_ x y_{xx}}{(1+ y_x^2)^2} \cdot \f{d \kappa}{d x}
\end{eqnarray}
Now we insert \eqref{Vn} and \eqref{ks} into \eqref{vn} and get the following equation
\begin{eqnarray}\label{yt}
	\begin{cases}
y_t +  \f{(\al- 1) \cdot y_{xx}}{1+ y_x^2}+  \bigg(1+ \f{1}{2} \al^2\bigg) \f{y_{xx}^2}{(1+ y_x^2)^{\f{5}{2}}}+ \bigg(2 \al+ 5 \al^2- \f{1}{3} \al^3\bigg) \f{y_{xx}^3}{(1+ y_x^2)^4}+ \f{\al^2 (\al+ 3)}{\sqrt{1+ y_x^2}} \cdot \f{d^2 \kappa}{d x^2}+\\ 
\hspace{1em}+ \sqrt{1+ y_x^2}= \al^2 (\al+ 3)  y_x \cdot \kappa \cdot \f{d \kappa}{d x},\\
y(x, 0)= y_0(x).
	\end{cases}
\end{eqnarray}
where,
\begin{eqnarray*}
\f{d \kappa}{d x}&=& \f{y_{xxx}}{(1+ y_x^2)^{\f{3}{2}}}- \f{3 y_x \cdot (y_{xx})^2}{(1+ y_x^2)^{\f{5}{2}}},\\
\f{d^2 \kappa}{d x^2}&=& \f{y_{xxxx}}{(1+ y_x^2)^{\f{3}{2}}}- \f{ 3 (y_{xx})^3+ 9 y_x y_{xx} y_{xxx}}{(1+ y_x^2)^{\f{5}{2}}}+ \f{15 (y_x)^2 (y_{xx})^3 }{(1+ y_x^2)^{\f{7}{2}}}.
\end{eqnarray*}

In section \ref{sec:2.1} we recall some definitions, standard estimates from Harmonic analysis, as well as a form of Gronwall's inequality which fits our Grnonwall's type inequalities. In section \ref{sec:3} we present the existence of the solution of the equation \eqref{yt} in $H^4$.  In other words, section \ref{sec:3} covers the proof of Theorem \ref{thm1}. This is done via an approximate equation.   Finally, in section \ref{sec:5} we present a proof of Theorem \ref{thm2}. This is done via a coordinate scaling, where the scaling has been chosen carefully. 
\section{Preliminaries}

\subsection{Fourier series, function spaces and mulitpliers} 
\label{sec:2.1}   
We will consider periodic function spaces, although this is not essential.  A sufficiently regular 
function $f$ on a periodic interval
may be written with its Fourier series,
\begin{equation}\nonumber
f(x)=\sum_{p\in\mathbb{Z}}\hat{f}(p)e^{ipx}.
\end{equation}
Consequently, since $\widehat{-\De f}(p)= |p|^2 \hat{f}(p)$, we define the operators $|\nabla|^a:=(-\De)^{a/2}, a>0$, via its action on the Fourier side $\widehat{|\nabla|^a f}(p)= |p|^a \hat{f}(p)$.

The $L^p$ spaces are defined by the norm   $  \|f\|_{L^p}= \bigg( \int |f(x)|^p\ dx\bigg)^{\f{1}{p}}$. 
For $p\in (1, \infty)$,  the Sobolev spaces are the closure of the Schwartz functions in the norm $\|f\|_{W^{k, p}}= \|f\|_{L^p}+ \sum_{|\al| \leq k} \|\partial^{\al} f\|_{L^p}$,  while for a non-integer $s$ one takes   
\begin{equation*}
\|f\|_{W^{s, p}}= \|(1- \De)^{s/2} f\|_{L^p}\sim \|f\|_{L^p}+ \||\nabla|^s f\|_{L^p}.
\end{equation*}
The Sobolev embedding theorem states $\|f\|_{L^p(T^1)} \leq C \||\nabla|^s f\|_{L^q(T^1)}$, where $1<p<q<\infty$ and 
$ \f{1}{p}- \f{1}{q}= s$, with the usual modification for $p=\infty$, namely 
$\|f\|_{L^\infty(T^1)} \leq C_s \|  f\|_{W^{s,q}(T^1)}$, $s> \f{1}{p}$.  
Another useful ingredient will be  the Gagliardo - Nirenberg interpolation inequality,
\begin{equation*}
\| |\nabla|^s f\|_{L^p} \leq \||\nabla|^{s_1} f\|^{\theta}_{L^q} \||\nabla|^{s_2} f\|^{1- \theta}_{L^r},
\end{equation*}
where   $s= \theta s_1+ (1- \theta) s_2$  and $\f{1}{p}= \f{\theta}{q}+ \f{1- \theta}{r}$.   

Throughout this work we make use of a particular version of mollifier operators $\mathcal{J}^{\delta}$, $0< \delta<<1$, which represent the truncation of the Fourier series, zeroing out modes with wave number larger than $\frac{1}{\delta}$. We frequently use the following two essential properties of the mollifiers, which can be easily proved in a straightforward way using the Hausdorff-Young inequality, or alternatively the Plancherel theorem,  
\begin{eqnarray}\label{m0}
\|\mathcal{J}^{\delta} f\|_{H^s} &\leq& \|f\|_{H^s}\\ \label{m1}
\|\mathcal{J}^{\delta} \partial^s f\|_{L^2} &\leq& \frac{C}{\delta^s} \|f\|_{L^2}.
\end{eqnarray}
Note that the operator $\mathcal{J}^{\delta}$ is both a self-adjoint operator and a projection, i.e. $\mathcal{J}^{\delta} (\mathcal{J}^{\delta} f)= \mathcal{J}^{\delta} f$. Moreover,  it commutes with the derivative operator, $\mathcal{J}^{\delta} \partial f= \partial \mathcal{J}^{\delta} f$. 

\subsection{Gronwall's inequality}
 We need the following two versions of the Gronwall's inequality:
\begin{lem}\label{gr}
	Let the functions $x, a, b,$ and $k$ be continuous and nonnegative
	on the interval 
	$J = [\al, \be]$, and let $n$ be a positive integer $(n \geq 2).$ Assume $\frac{a}{b}$ 	
	is a nondecreasing function. If
	\begin{eqnarray}
	x(t) \leq a(t)+ b(t) \int_\al^t k(s) x^n(s) ds,\ \ \ t \in J,
	\end{eqnarray}
	then 
	\begin{equation}
	x(t) \leq a(t) \bigg\{1- (n-1) \int_{\al}^t k(s) b(s) a^{n-1}(s) ds \bigg\}^{\frac{1}{n- 1}},\ \ \ \al \leq t \leq \be_n,
	\end{equation}
	where $\beta_{n}$ is given by
	\begin{eqnarray}\label{tim}
	\be_n= \sup \bigg\{ t \in J: (n-1) \int_{\al}^t k(s) b(s) a^{n-1}(s) ds < 1 \bigg\}.
	\end{eqnarray}
	
\end{lem}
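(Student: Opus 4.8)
The plan is to reduce the integral inequality to a scalar Bernoulli-type \emph{differential} inequality and then integrate it explicitly. First I would set $u(t)=\int_\al^t k(s)x^n(s)\,ds$, so that $u(\al)=0$, $u'(t)=k(t)x^n(t)$, and the hypothesis reads $x(t)\le a(t)+b(t)u(t)$ on $J$. Assuming for the moment that $a$ is strictly positive (the general nonnegative case is recovered at the end by replacing $a$ with $a+\ve b$ and letting $\ve\to 0^+$, which preserves both the hypothesis and the monotonicity of $a/b$, since $(a+\ve b)/b=a/b+\ve$), I would introduce the normalized quantity
\begin{equation*}
q(t)=\frac{a(t)+b(t)u(t)}{a(t)}=1+\frac{b(t)}{a(t)}\,u(t)\ge 1,
\end{equation*}
so that $x(t)\le a(t)q(t)$ and $q(\al)=1$.

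The crucial step is to differentiate $q$ and exploit the monotonicity hypothesis. Since $a/b$ is nondecreasing and positive, its reciprocal $b/a$ is nonincreasing, so $(b/a)'\le 0$; as $u\ge 0$ and $k\ge 0$, the term $(b/a)'u$ is nonpositive and may be discarded. This gives
\begin{equation*}
q'(t)=\Big(\tfrac{b}{a}\Big)'(t)\,u(t)+\frac{b(t)}{a(t)}u'(t)\le \frac{b(t)}{a(t)}k(t)x^n(t)\le \frac{b(t)}{a(t)}k(t)\big(a(t)q(t)\big)^n=k(t)b(t)a^{n-1}(t)\,q^n(t),
\end{equation*}
a Bernoulli differential inequality for $q$ with $q(\al)=1$. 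I expect this manipulation --- producing exactly the single power $a^{n-1}$ and one factor of $b$ in the coefficient, which is precisely the kernel appearing in \eqref{tim} --- to be the main place where the hypothesis that $a/b$ is nondecreasing is essential, and hence the step that must be set up carefully.

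The remaining steps are routine ODE bookkeeping, with care about the directions of the inequalities. Dividing by $q^n>0$ and writing the left side as $\tfrac{1}{1-n}\tfrac{d}{dt}q^{1-n}$, then multiplying by $1-n<0$ (which reverses the inequality) and integrating from $\al$ to $t$ using $q(\al)=1$, yields
\begin{equation*}
q^{1-n}(t)\ge 1-(n-1)\int_\al^t k(s)b(s)a^{n-1}(s)\,ds.
\end{equation*}
For $t<\ben$ the right-hand side is strictly positive by the definition \eqref{tim}, so raising to the power $\tfrac{1}{1-n}$ --- a decreasing function on the positive reals, which again reverses the inequality --- gives $q(t)\le \big(1-(n-1)\int_\al^t kb\,a^{n-1}\big)^{1/(1-n)}$. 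Multiplying by $a(t)$ and recalling $x\le aq$ produces the claimed bound on $[\al,\ben)$; finally I would send $\ve\to 0^+$ in the regularized problem to cover the case where $a$ is merely nonnegative, using continuity of all data on the compact interval $J$. (The exponent produced by this argument is $\tfrac{1}{1-n}=-\tfrac{1}{n-1}$, consistent with the bound blowing up as the integral approaches $\tfrac{1}{n-1}$, i.e.\ as $t\uparrow\ben$.)
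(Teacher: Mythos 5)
The first thing to note is that the paper contains no proof of this lemma to compare against: immediately after stating the two Gronwall lemmas, the authors say that Lemma \ref{gr} is quoted from \cite[Theorem 25]{SSD} (it is the classical Willett--Wong inequality), and they only prove Lemma \ref{grr}. Your argument is, in substance, the standard proof of that classical result: normalize by $a$ (after the harmless regularization $a\mapsto a+\varepsilon b$, which is legitimate since $(a+\varepsilon b)/b=a/b+\varepsilon$ is still nondecreasing), use the monotonicity of $a/b$ to reduce to the Bernoulli inequality $q'\le k\,b\,a^{n-1}q^{n}$ with $q(\alpha)=1$, and integrate with the correct inequality reversals. Your parenthetical remark about the exponent is also correct and worth stressing: the argument produces $\{\,\cdot\,\}^{1/(1-n)}=\{\,\cdot\,\}^{-1/(n-1)}$, and this is the right exponent. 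The exponent $+\frac{1}{n-1}$ printed in the paper's statement is a sign typo: with a positive exponent the right-hand side would be bounded by $a(t)$ and would tend to $0$ as $t\uparrow\beta_n$ instead of blowing up there, and it is the negative exponent that is consistent with how the lemma is applied in the proof of Lemma \ref{lem00} to produce the existence time \eqref{tt00}.

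There is, however, one step in your writeup that is not justified under the stated hypotheses: you differentiate $b/a$, but $a$ and $b$ are only assumed continuous, so $(b/a)'$ need not exist (monotonicity gives differentiability only almost everywhere, and since $q=1+(b/a)u$ may then fail to be absolutely continuous, passing from an a.e. differential inequality to its integrated form requires extra measure-theoretic justification). The fix is to use the monotonicity pointwise rather than infinitesimally. For $s\le t$ one has $b(t)/a(t)\le b(s)/a(s)$, so, since $kx^n\ge 0$,
\begin{equation*}
q(t)=1+\frac{b(t)}{a(t)}\int_\alpha^t k(s)x^n(s)\,ds\;\le\;1+\int_\alpha^t k(s)\frac{b(s)}{a(s)}x^n(s)\,ds\;\le\;1+\int_\alpha^t k(s)b(s)a^{n-1}(s)q^n(s)\,ds=:Q(t),
\end{equation*}
where the last inequality uses $x\le aq$. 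The majorant $Q$ is $C^1$ (an integral of continuous functions), with $Q(\alpha)=1$, $q\le Q$, and $Q'(t)=k(t)b(t)a^{n-1}(t)q^n(t)\le k(t)b(t)a^{n-1}(t)Q^n(t)$. Your Bernoulli computation then applies verbatim to $Q$, giving $Q(t)\le\bigl\{1-(n-1)\int_\alpha^t k(s)b(s)a^{n-1}(s)\,ds\bigr\}^{1/(1-n)}$ for $t<\beta_n$, hence the claimed bound for $x\le aq\le aQ$; the endpoint $t=\beta_n$ follows by continuity (or trivially if the bracket vanishes there), and your $\varepsilon\to0^+$ limit handles merely nonnegative $a$. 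With that one repair the proof is complete.
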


\begin{lem}\label{grr}
	Fix $\tau_*$ and $\Ga_*> 0$. Assume the function $E(t)$ satisfies the relation 
	 \begin{eqnarray}
	\f{d}{d t} E(t) \leq \al E(t)+ \be E^2(t)+ \eps^{n} \bigg(E(t) \bigg)^{m},
	\end{eqnarray}
	where $0< \eps<< 1$, $n \geq 0$, and $m\geq 1$.  Then there exists $E_*$ and $\eps_*$ so that for any $E(0)= E_0 \leq E_*$ and $0< \eps \leq \eps_*$
	\begin{eqnarray}
	\sup_{0< \tau< \tau_*} |E(t)| \leq \Ga_*.
	\end{eqnarray}
\end{lem}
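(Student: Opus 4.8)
The plan is to run a continuity (bootstrap) argument that traps $E(t)$ inside the region where the nonlinear terms are subordinate to a single linear one. First I would record the standing assumption that $E\geq 0$ (it is an energy), so that $|E(t)|=E(t)$ and only an upper bound is needed; this is essential, since the hypothesis controls $\dot E$ only from above and a priori gives no lower bound. I would then dispose of $\eps$ cheaply: because $n\geq 0$, taking $\eps_*=1$ forces $\eps^{n}\le 1$ for every $0<\eps\le\eps_*$, so the inequality may be replaced by $\dot E\le \alpha E+\beta E^{2}+E^{m}$, and smaller $\eps$ can only help. Thus in this lemma the role of $\eps_*$ is merely to guarantee $\eps^{n}\le 1$.

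The crux is the observation that on the set where $E\le\Gamma_*$ the entire right-hand side is dominated by a linear term. Using $E\le\Gamma_*$ together with $m\ge 1$ gives $E^{2}\le\Gamma_* E$ and $E^{m}=E\cdot E^{m-1}\le\Gamma_*^{\,m-1}E$, so that $\dot E\le K E$ with $K:=|\alpha|+|\beta|\Gamma_*+\Gamma_*^{\,m-1}$, a constant depending only on the given data $\Gamma_*,\alpha,\beta,m$. Integrating this linear inequality with the integrating factor $e^{-Kt}$ yields $E(t)\le E_{0}e^{Kt}\le E_{0}e^{K\tau_*}$ for as long as $E$ stays $\le\Gamma_*$. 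I would then set $E_*:=\tfrac12\Gamma_* e^{-K\tau_*}$, so that any $E_0\le E_*$ forces $E(t)\le\tfrac12\Gamma_*$ on that region, i.e. strictly below the threshold $\Gamma_*$.

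To upgrade this conditional estimate to an unconditional bound on all of $[0,\tau_*]$, I would set $T:=\sup\{t\in[0,\tau_*]:E(s)\le\Gamma_*\ \text{for all}\ s\in[0,t]\}$. Continuity of $E$ and $E(0)\le\Gamma_*/2$ give $T>0$, and the linear estimate above applies on $[0,T]$, producing $E(T)\le\tfrac12\Gamma_*<\Gamma_*$. If $T<\tau_*$, continuity would keep $E$ below $\Gamma_*$ slightly beyond $T$, contradicting the maximality of $T$; hence $T=\tau_*$, and $E(t)\le\tfrac12\Gamma_*\le\Gamma_*$ throughout, which is exactly the claim $\sup_{0\le t\le\tau_*}|E(t)|\le\Gamma_*$. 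As a byproduct this shows the solution cannot blow up before $\tau_*$.

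The main obstacle to anticipate is precisely this possibility of finite-time blowup driven by the $\beta E^{2}$ and $\eps^{n}E^{m}$ terms: the differential inequality by itself permits trajectories escaping to $+\infty$ before $\tau_*$. The bootstrap resolves this by exploiting that the interval $[0,\tau_*]$ is fixed and $E_0$ is ours to shrink, so the solution never reaches the regime where the nonlinearity overwhelms linear growth over a bounded time. The only genuine care needed is the order of quantifiers (fix $\Gamma_*,\tau_*$; then $K$; then $E_*$; then $\eps_*$) and the continuity step that closes the bootstrap. I note that one could instead feed the integral form of the inequality into the Bihari-type estimate of Lemma~\ref{gr}, but the linear-trapping argument above is cleaner and self-contained.
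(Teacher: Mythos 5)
Your proof is correct and takes essentially the same route as the paper's: both arguments dominate $\be E^2$ and $\eps^{n}E^{m}$ by $(\be\Ga_*+\Ga_*^{m-1})E$ on the region where $E\le\Ga_*$, apply the linear Gronwall inequality, take $\eps_*=1$, and choose $E_*$ of size $\Ga_* e^{-K\tau_*}$ so the solution is trapped below $\Ga_*$ up to time $\tau_*$. The only difference is packaging --- the paper works with the first hitting time $t_0$ of the level $\Ga_*$ and bounds it below, while you run a bootstrap with the strict improvement $E\le\Ga_*/2$ --- and your version states explicitly the sign convention $E\ge 0$ (used implicitly in the paper) and writes the exponent in $E_*$ with the correct negative sign.
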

Both of these versions of Gronwall's inequality are known. Here we give our own proof for Lemma \ref{grr}. One can find the proof of Lemma \ref{gr} in  \cite[Theorem 25]{SSD}.
\begin{proof}
	In order to prove Lemma \ref{grr} fix $\Ga_*$, and let $E(t_0)$ be the first time at which $E(t_0)= \Ga_*$ (if for all $t> 0$, $E(t_0)< \Ga_*$ then let $t_0= \infty$, in which case the proof is completed). Hence, for any $t \in [0, t_0]$ we have $E^m \leq \Ga_*^{m-1} E$. Therefore,
	\begin{eqnarray}
	\f{d}{d t} E(t) \leq \bigg(\al + \be \Ga_*+ \eps^{n} \Ga_*^{m- 1} \bigg) E(t).
	\end{eqnarray}
	Now we apply the routine Gronwall's inequality to this relation, and we get, for any $t \in [0, t_0]$
	\begin{eqnarray}
	E(t) \leq \exp\bigg((\al + \be \Ga_*+ \eps^{n} \Ga_*^{m- 1} ) t\bigg) E_0.
	\end{eqnarray}
	At $t= t_0$, we have $E(t_0)= \Ga_*$, hence
	\begin{eqnarray*}
	\Ga_* \leq \exp\bigg((\al + \be \Ga_*+ \eps^{n} \Ga_*^{m- 1} ) t_0\bigg) E_0,
	\end{eqnarray*}
which implies 
\begin{eqnarray*}
 t_0 \geq \f{\ln\left(\frac{\Ga_*}{E_0}\right)}{\al + \be \Ga_*+ \eps^{n} \Ga_*^{m- 1}} =:
 \tau_0(\Ga_*,E_0,\eps).
\end{eqnarray*} 
Note that $\tau_0(\Ga_*, E_0, \eps)$ is decreasing with $\eps$ and with $E_0$. What we have shown so far asserts that if $0 \leq t \leq \tau_0(\Ga_*, E_0, \eps)$, then
\begin{eqnarray}\label{ee}
E(t) \leq \Ga_*.
\end{eqnarray}
Now fix a time $t_*$, and $\Ga_*$ as well as $\eps \leq 1:= \eps_*$, and solve $\tau_0(\Ga_*, E_0, \eps)= t_*$ for $E_*$,
namely
\begin{eqnarray}
E_*= \Ga_* \exp\bigg( (\al + \be \Ga_*+ \eps^{n} \Ga_*^{m- 1}) t_*\bigg).
\end{eqnarray}
Now we claim that with $t_*, \Ga_*$ and $E_*$ as above, then if $E_0 \leq E_*$ and $\eps< 1$ we have 
\begin{eqnarray}
\sup_{0< \tau< \tau_*} |E(t)| \leq \Ga_*.
\end{eqnarray}
Indeed, by \eqref{ee} we have $E(t) \leq \Ga_*$ for $0 \leq t \leq \tau_0(\Ga_*, E_0, \eps)$. Since $\tau_0(\Ga_*, E_0, \eps)$ is decreasing with respect to $E_0$ and $\eps$, we know
\begin{eqnarray*}
t_*= \tau_0(\Ga_*, E_*, 1) \leq \tau_0(\Ga_*, E_0, \eps).
\end{eqnarray*}
Thus
\begin{eqnarray*}
\{t: 0 \leq t \leq t_*\} \subset \{t: 0 \leq t \leq \tau_0(\Ga_*, E_0, \eps)\},
\end{eqnarray*}
and we get 
\begin{eqnarray}
\sup_{0< \tau< \tau_*} |E(t)| \leq \Ga_*.
\end{eqnarray}
\end{proof}

\subsection{Main Result}
As it is mentioned before we pursue two main goals in this article. First we aim to prove the well-posedness of the initial value
problem associated to 
\eqref{yt}. This is the content of Theorem \ref{thm1}. Our second goal is to show that the solution to equation \eqref{yt} stays close enough to the solution of the equation \eqref{ff}, in a sense to be made precise. In Theorem \ref{thm2} we present the related result.
\begin{thm}\label{thm1}
	Let $y(0) \in H^{5}$ be given. Then there exists a time $T= T(\|y(0)\|_{H^4})$ and a function $y \in C([0, T], H^5)$ which 
	satisfies \eqref{yt}, and the initial condition $y (\cdot, 0)= y(0)$.
\end{thm}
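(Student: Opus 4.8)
The plan is to treat \eqref{yt} as a quasilinear fourth-order \emph{parabolic} evolution and to construct solutions by a mollifier regularization, closing the argument with a $\delta$-uniform energy estimate. Solving \eqref{yt} for $y_t$, write it as $y_t=\Phi(y)$, where the top-order part is isolated as
\begin{equation*}
\Phi(y)=-\,a(y_x)\,y_{xxxx}+\mathcal{R}(y),\qquad a(y_x)=\f{\al^2(\al+3)}{(1+y_x^2)^2},
\end{equation*}
and $\mathcal{R}(y)$ collects every term containing at most three derivatives of $y$ (the quadratic and cubic curvature terms, the lower-order pieces of $d^2\kappa/dx^2$, the term $\sqrt{1+y_x^2}$, and the right-hand side $\al^2(\al+3)\,y_x\,\kappa\,d\kappa/dx$). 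The decisive structural fact is that, since $\al>0$, the leading coefficient satisfies $a(y_x)\ge c_0>0$ whenever $\|y_x\|_{L^\infty}$ is bounded; the fourth-order operator is therefore dissipative, which is exactly what makes the initial value problem well-posed forward in time.

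First I would produce approximate solutions using the projections $\mathcal{J}^{\delta}$ of Section \ref{sec:2.1}. Consider the regularized problem
\begin{equation*}
\partial_t y^\delta=\mathcal{J}^{\delta}\,\Phi\!\big(\mathcal{J}^{\delta} y^\delta\big),\qquad y^\delta(\cdot,0)=\mathcal{J}^{\delta} y(0).
\end{equation*}
Since $\mathcal{J}^{\delta} y(0)$ and the right-hand side lie in the range of $\mathcal{J}^{\delta}$, the solution $y^\delta$ remains a trigonometric polynomial with wavenumbers at most $1/\delta$, so this is in fact a finite-dimensional system of ODEs; the derivative loss in $\Phi$ is harmless because of \eqref{m1}, and the right-hand side is locally Lipschitz, so the Picard--Lindel\"of theorem gives a unique $y^\delta\in C^1([0,T_\delta),H^5)$ on a maximal interval.

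The heart of the matter is a $\delta$-uniform energy estimate. Because $\mathcal{J}^{\delta}$ is a self-adjoint projection that commutes with $\partial_x$, and because $y^\delta=\mathcal{J}^{\delta} y^\delta$, differentiating the $H^4$ norm gives $\tfrac12\tfrac{d}{dt}\|y^\delta\|_{H^4}^2=\langle \Phi(y^\delta),y^\delta\rangle_{H^4}$, in which the mollifier has effectively disappeared. The leading term contributes the good dissipation
\begin{equation*}
-\int a(y^\delta_x)\,\big(\partial_x^6 y^\delta\big)^2\,dx\ \le\ -\,c_0\,\|\partial_x^6 y^\delta\|_{L^2}^2,
\end{equation*}
valid as long as $\|y^\delta_x\|_{L^\infty}$, hence $\|y^\delta\|_{H^4}$ through Sobolev embedding, remains bounded. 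Every remaining contribution --- the commutators of $\partial_x^4$ with the variable coefficient $a(y_x)$, and all the terms arising from $\mathcal{R}$ --- carries, after integration by parts, at most a single factor of $\partial_x^6 y^\delta$ to the first power, the other factors involving no more than five derivatives. By the Gagliardo--Nirenberg and Young inequalities each of these is bounded by $\tfrac{c_0}{2}\|\partial_x^6 y^\delta\|_{L^2}^2$ plus a polynomial $P(\|y^\delta\|_{H^4})$, so the dissipation absorbs all top-order terms and one is left with $\tfrac{d}{dt}\|y^\delta\|_{H^4}^2\le P(\|y^\delta\|_{H^4})$. Comparison with the corresponding ODE (or Lemma \ref{gr}) then yields a time $T=T(\|y(0)\|_{H^4})>0$ and a bound $\sup_\delta\sup_{[0,T]}\|y^\delta\|_{H^4}\le M$ independent of $\delta$; in particular $T_\delta>T$. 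Running the same estimate one order higher, with the $H^4$ bound now controlling the coefficients, gives $\sup_\delta\sup_{[0,T]}\|y^\delta\|_{H^5}\le M'$, and this is the step that uses the $H^5$ regularity of the data.

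Finally I would pass to the limit $\delta\to0$. The uniform $H^5$ bound together with the bound on $\partial_t y^\delta$ read off from the equation in a weaker norm yields, by Aubin--Lions and Arzel\`a--Ascoli, a subsequence converging to some $y$, weak-$\ast$ in $L^\infty([0,T],H^5)$ and strongly in $C([0,T],H^{s'})$ for every $s'<5$; since $\mathcal{J}^{\delta}\to\mathrm{Id}$ strongly, the limit solves \eqref{yt} with $y(\cdot,0)=y(0)$, and a standard parabolic (energy-balance) argument upgrades the convergence to $y\in C([0,T],H^5)$. I expect the main obstacle to be precisely the quasilinear top-order term: the energy estimate must simultaneously exploit the positivity $a(y_x)\ge c_0$ and control both the commutator of $\partial_x^4$ with $a(y_x)$ and the fraction-laden lower-order terms of $\mathcal{R}$, absorbing every fifth- and sixth-derivative contribution into the dissipation so that the inequality closes with $T$ depending on $\|y(0)\|_{H^4}$ alone.
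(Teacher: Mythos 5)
Your proposal follows essentially the same route as the paper: mollified approximate equations solved via the Picard theorem, a $\delta$-uniform $H^4$ energy estimate in which the degenerate fourth-order dissipation absorbs all top-order contributions after integration by parts (yielding $T=T(\|y(0)\|_{H^4})$ via a nonlinear Gronwall argument), a second estimate at the $H^5$ level using the $H^4$ bound, and a compactness passage to the limit. The only cosmetic differences are that the paper keeps the weighted dissipation $\int (\mathcal{J}^{\delta}\partial_x^6 y^{\delta})^2/(1+(\mathcal{J}^{\delta}y^{\delta}_x)^2)^2\,dx$ on the left rather than a lower bound $c_0\|\partial_x^6 y^{\delta}\|_{L^2}^2$, and it passes to the limit through Banach--Alaoglu together with the integral representation of the solution where you invoke Aubin--Lions, both serving the same purpose.
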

\begin{thm}\label{thm2}
	Fix $\tau_{*}>0$ and $\Ga_{*}> 0.$ Then there exists $\eps_*$ and $E_*$ so that whenever $0< \eps< \eps_*$ and $\|U_0(\cdot)\|_{H^4} \leq E_{*},$ the following hold:
	
	Let $y(x, t)$ be the solution of \eqref{yt} with $\al- 1= \eps$, and 
	\begin{eqnarray}\label{In}
	y(x, 0)= \eps U_0(\sqrt{\eps} x).
	\end{eqnarray}
	Let $U(\xi, \tau)$ be the solution of  the Kuramoto-Sivashinsky equation
	\begin{eqnarray}\label{UU}
	\partial_{\tau} U+ \f{1}{2} (\partial_{\xi} U)^2+ \partial_{\xi}^2 U+  4 \partial_{\xi}^4 U= 0,
	\end{eqnarray}
	with $U(\xi, 0)= U_0(\xi)$. Then
		\begin{eqnarray}\label{main}
	\sup_{0< t< \f{\tau_*}{\eps^2}} \|y(\cdot, t)+ t-  \eps U(\sqrt{\eps}\ \cdot, \eps^2 t)\|_{L^2} \leq \Ga_* \eps^{\f{7}{4}}.
	\end{eqnarray}
\end{thm}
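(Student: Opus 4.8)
The plan is to pass to the weakly nonlinear scaling in which \eqref{yt} becomes a small perturbation of the Kuramoto--Sivashinsky equation \eqref{UU}, compare the two solutions \emph{in the scaled variables}, and then translate the comparison back. Motivated by the data \eqref{In}, I would seek $y$ in the form $y(x,t) = -t + \eps\, V(\sqrt{\eps}\,x,\ \eps^2 t)$, i.e. set $\xi = \sqrt\eps\,x$, $\tau = \eps^2 t$ and $V(\xi,\tau)=\eps^{-1}\!\big(y(\xi/\sqrt\eps,\tau/\eps^2)+\tau/\eps^2\big)$, so that $V(\cdot,0)=U_0$. Then $\partial_x^k y = \eps^{1+k/2}\partial_\xi^k V$ and $\partial_t y = -1+\eps^3\partial_\tau V$, so $y_x=O(\eps^{3/2})$ is small and $\sqrt{1+y_x^2}=1+\tfrac12\eps^3 V_\xi^2+O(\eps^6)$, the $-1$ from $y_t$ cancelling the leading $1$ of the arclength term. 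Substituting into \eqref{yt}, using $\al-1=\eps$ and $\al^2(\al+3)=4+9\eps+O(\eps^2)$, and dividing by $\eps^3$, the order-$\eps^0$ part reproduces \eqref{UU} exactly while every remaining contribution carries a positive power of $\eps$. Folding the $O(\eps)$ correction of the fourth-order coefficient into the principal part, I obtain
\begin{equation*}
\partial_\tau V + \tfrac12(\partial_\xi V)^2 + \partial_\xi^2 V + \mu(\eps)\,\partial_\xi^4 V = \eps\,\mathcal R_\eps[V],\qquad \mu(\eps)=4+9\eps+O(\eps^2),
\end{equation*}
where $\mathcal R_\eps[V]$ is a finite sum of products of derivatives of $V$ of order at most three (the genuinely higher-order pieces each carrying an extra factor of $\eps$), so that $\|\mathcal R_\eps[V]\|_{L^2}$ is dominated by a fixed power of $\|V\|_{H^4}$.

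Since $\mu(\eps)>0$ for small $\eps$, the fourth-order term remains dissipative, and a standard $H^4$ energy estimate for the displayed equation produces a differential inequality of exactly the form $\tfrac{d}{d\tau}\mathcal E \le \al\mathcal E + \be\mathcal E^2 + \eps^n\mathcal E^m$ (in the notation of Lemma \ref{grr}) for $\mathcal E(\tau)=\|V(\tau)\|_{H^4}^2$: the destabilizing second-order term and the transport/quadratic nonlinearity are absorbed into the $\mathcal E$ and $\mathcal E^2$ terms via Gagliardo--Nirenberg against the fourth-order dissipation, and $\eps\,\mathcal R_\eps$ yields the $\eps^n\mathcal E^m$ term. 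With $\mathcal E(0)=\|U_0\|_{H^4}^2\le E_*^2$, Lemma \ref{grr} furnishes a fixed bound $\sup_{0<\tau<\tau_*}\|V\|_{H^4}^2\le M$ for all sufficiently small $E_*,\eps$; this continues the short-time solution of Theorem \ref{thm1} to the whole rescaled interval $[0,\tau_*]$ (that is, $t\in[0,\tau_*/\eps^2]$) with bounds uniform in $\eps$. The same estimate applied to the exact KS solution $U$ of \eqref{UU} gives $\sup_{[0,\tau_*]}\|U\|_{H^4}\le C(\tau_*)E_*$.

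Next I set $W=V-U$, so $W(\cdot,0)=0$, and subtract \eqref{UU} from the $V$-equation. Writing $V_\xi^2-U_\xi^2=2U_\xi W_\xi+W_\xi^2$ and moving the coefficient mismatch $(\mu-4)U_{\xi\xi\xi\xi}=O(\eps)U_{\xi\xi\xi\xi}$ to the right, this gives
\begin{equation*}
\partial_\tau W + U_\xi\partial_\xi W + \tfrac12(\partial_\xi W)^2 + \partial_\xi^2 W + \mu\,\partial_\xi^4 W = \eps\,\mathcal F,\qquad \|\mathcal F\|_{L^2}\le C(\tau_*)E_*,
\end{equation*}
the bound on $\mathcal F$ coming from the a priori control of $V$ and $U$. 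Multiplying by $W$ and integrating over the period, $\mu\int W\partial_\xi^4 W=\mu\|\partial_\xi^2 W\|_{L^2}^2\ge0$ supplies dissipation, while $\int W\partial_\xi^2 W=-\|\partial_\xi W\|_{L^2}^2$, the transport term $\int U_\xi W\partial_\xi W=-\tfrac12\int U_{\xi\xi}W^2$, and the cubic term $\tfrac12\int(\partial_\xi W)^2 W$ are each bounded by $\delta\|\partial_\xi^2 W\|_{L^2}^2+C\|W\|_{L^2}^2$ by interpolation together with the $L^\infty$ bounds on $U_{\xi\xi}$ and on $W$ (the latter from the a priori $H^4$ bound). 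Absorbing the $\delta$-terms leaves
\begin{equation*}
\frac{d}{d\tau}\|W\|_{L^2}^2 \le C\|W\|_{L^2}^2 + C\eps E_*\|W\|_{L^2},
\end{equation*}
and Gronwall on the fixed interval $[0,\tau_*]$ with $W(\cdot,0)=0$ gives $\sup_{0<\tau<\tau_*}\|W\|_{L^2(d\xi)}\le C(\tau_*)E_*\,\eps$. Choosing $E_*$ small enough that $C(\tau_*)E_*\le\Ga_*$ yields $\|W(\cdot,\tau)\|_{L^2(d\xi)}\le\Ga_*\eps$.

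Finally, undoing the change of variables, $y(x,t)+t-\eps U(\sqrt\eps\,x,\eps^2 t)=\eps\,W(\sqrt\eps\,x,\eps^2 t)$, and the one-dimensional scaling $\|g(\sqrt\eps\,\cdot)\|_{L^2(dx)}=\eps^{-1/4}\|g\|_{L^2(d\xi)}$ give $\|\eps W(\sqrt\eps\,\cdot,\tau)\|_{L^2(dx)}=\eps^{3/4}\|W(\cdot,\tau)\|_{L^2(d\xi)}$; as $t$ ranges over $[0,\tau_*/\eps^2]$, $\tau=\eps^2 t$ ranges over $[0,\tau_*]$, and the previous step produces exactly the bound $\Ga_*\eps^{7/4}$ of \eqref{main}. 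The exponent decomposes as $\eps^{7/4}=\eps\cdot\eps^{-1/4}\cdot\eps$: the scaling amplitude, the $L^2$ Jacobian factor, and the $O(\eps)$ closeness of $W$. I expect the main obstacle to be the uniform-in-$\eps$ a priori control over the long $O(\eps^{-2})$ timescale: because KS carries a linearly destabilizing second-order term, naive energy estimates grow, so one must both resolve the apparent loss of derivatives in $\mathcal R_\eps$ (the fourth-order residual sits at the same order as the dissipation and is tamed only by keeping it in the principal part with coefficient $\mu(\eps)\approx4>0$, while every other residual carries a spare power of $\eps$) and invoke the tailored Gronwall inequality of Lemma \ref{grr} to forestall blow-up on $[0,\tau_*]$. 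Once those are in hand, the difference estimate is a routine forced-linear Gronwall argument.
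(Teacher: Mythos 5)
Your proposal is correct and follows essentially the same route as the paper: the identical scaling $y(x,t)=-t+\eps\,\Phi(\sqrt{\eps}x,\eps^{2}t)$, uniform-in-$\eps$ $H^{4}$ bounds for the scaled solution on $[0,\tau_{*}]$ closed with the tailored Gronwall inequality of Lemma \ref{grr} (keeping the variable-coefficient fourth-order term as the dissipative principal part, exactly as in Lemma \ref{lem3}), an $L^{2}$ Gronwall estimate for the difference $W=\Phi-U$ giving an $O(\eps)$ bound as in Lemma \ref{PhiU}, and the $\eps^{-1/4}$ change-of-variables factor producing the exponent $\f{7}{4}$ as in Remark \ref{remproof}. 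The only minor deviations are cosmetic: the paper invokes Tadmor's global $H^{4}$ bound for the KS solution (Lemma \ref{tad}) where you rerun the energy estimate, and your explicit tracking of the constant $C(\tau_{*})E_{*}\leq \Ga_{*}$ is, if anything, slightly more careful than the paper's bookkeeping.
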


The proofs of Theorems \ref{thm1} and \ref{thm2} are presented in Lemma \ref{lem0} and Remark \ref{remproof}, respectively.
\begin{rem}
	For simplicity in our calculations, we choose the initial data of the equation \eqref{UU} to be \eqref{In}. Our proofs, however, indicate that any other initial data close enough to $\f{1}{\eps} y_0(\f{x}{\eps})$ leads to the same result. See the proof of Lemma \ref{PhiU}. 
\end{rem}
\begin{rem}
	The time interval presented in Theorem \ref{thm1}  increases  for a smaller  $\|y(0)\|_{H^4}$. In fact  $T< C \ln\bigg(1+ \f{C}{\|y_0\|^{m- 2}_{H^4}}\bigg)$, for some positive $m$  to be defined in the sequel. 
\end{rem}

\section{Existence of the solution}\label{sec:3}
The first step toward the completion of the argument is to show that the equation  \eqref{yt} has an unique solution in some Sobolev spaces, over a time interval $[0, T]$, with $T$ to be determined. The proof follows the energy method. To that end, we first introduce approximate equations, where the approximation are introduced via a multiplier operator $\mathcal{J}^{\de}$. We next use the Picard Theorem to find
that the approximate equations admit unique solutions in some Sobolev spaces over a time interval $[0, T_{\de}]$. This $T_{\de}$ might be small (i.e., this time depends badly on the approximation parameter $\de$). Therefore, in an attempt to increase $T_{\de}$, we prove
 bounds on the solution which are uniform with respect to $\de.$ Once the uniform bounds are in hand, since norms of the solutions of the approximate equations are not increasing fast,  the solutions may be continued to a time interval $[0, T]$, where $T$ can be taken
 to be independent of $\de.$  Finally, with solutions existing on a uniform time interval, the limit may be taken as $\de$ vanishes, and this
 limit can be seen to satisfy the correct initial value problem. 

We define $y^{\delta}$ to be the solution of the following initial value problem:
\begin{equation}\label{yt0}
\begin{cases}
y^{\delta}_t +  (\al- 1) \mathcal{J}^{\delta} \bigg[\f{ \mathcal{J}^{\delta} y^{\delta}_{xx}}{1+ (\mathcal{J}^{\delta} y^{\delta}_x)^2}\bigg]+ \bigg(1+ \f{1}{2} \al^2\bigg) \mathcal{J}^{\delta} \bigg[\f{(\mathcal{J} y^{\delta}_{xx})^2}{(1+ (\mathcal{J}^{\delta} y^{\delta}_x)^2)^{\f{5}{2}}} \bigg]+  \bigg(2 \al+ 5 \al^2- \f{1}{3} \al^3\bigg) \mathcal{J}^{\delta} \bigg[\f{(\mathcal{J} y^{\delta}_{xx})^3}{(1+ (\mathcal{J}^{\delta} y^{\delta}_x)^2)^4} \bigg]+ \\
\hspace{2em}+\al^2 (\al+ 3)  \mathcal{J}^{\delta} \bigg[\f{1}{\sqrt{1+ (\mathcal{J}^{\delta} y^{\delta}_x)^2}} \cdot\f{d^2 \kappa^{\delta}}{d x^2}\bigg]
+   \mathcal{J}^{\delta} \bigg[\sqrt{1+ (\mathcal{J}^{\delta} y^{\delta}_x)^2}\bigg]=  \al^2 (\al+ 3)  \mathcal{J}^{\delta} \bigg[(\mathcal{J}^{\delta} y^{\delta}_x) \cdot \kappa^{\delta} \cdot \f{d \kappa^{\delta}}{d x}\bigg],\\
y^{\delta}(x, 0)=\mathcal{J}^{\delta}  y_0(x),
\end{cases}
\end{equation}
where
\begin{eqnarray}
	\kappa^{\delta}&=& \f{\mathcal{J}^{\delta} y^{\delta}_{xx}}{(1+ (\mathcal{J}^{\delta} y^{\delta}_x)^2)^{\f{3}{2}}},\\
\label{oneDerivativeKappa}	\f{d \kappa^{\delta}}{d x}&=& \f{\mathcal{J}^{\delta} y^{\delta}_{xxx}}{(1+ (\mathcal{J}^{\delta} y^{\delta}_x)^2)^{\f{3}{2}}}-3  \f{ (\mathcal{J}^{\delta} y^{\delta}_x) \cdot (\mathcal{J}^{\delta} y^{\delta}_{xx})^2}{(1+ (\mathcal{J}^{\delta} y^{\delta}_x)^2)^{\f{5}{2}}},\\
\label{twoDerivativesKappa}	\f{d^2 \kappa^{\delta}}{d x^2}&=& \f{\mathcal{J}^{\delta} y^{\delta}_{xxxx}}{(1+ (\mathcal{J}^{\delta} y^{\delta}_x)^2)^{\f{3}{2}}}- \f{ 3 (\mathcal{J}^{\delta} y^{\delta}_{xx})^3+ 9 (\mathcal{J}^{\delta} y^{\delta}_x) (\mathcal{J}^{\delta} y^{\delta}_{xx}) (\mathcal{J}^{\delta} y^{\delta}_{xxx})}{(1+ (\mathcal{J}^{\delta} y^{\delta}_x)^2)^{\f{5}{2}}}+ \f{15 ( \mathcal{J}^{\delta} y^{\delta}_x)^2 (\mathcal{J}^{\delta} y^{\delta}_{xx})^3 }{(1+ (\mathcal{J}^{\delta} y^{\delta}_x)^2)^{\f{7}{2}}}.
\end{eqnarray} 
We now present the first step toward the existence argument.  We show that the equation \eqref{yt0} admits a solution up to a small time $T_{\de}$. 
\begin{lem}
	Let $y(0) \in H^5$ be given. For any $\delta> 0$, for any $s\geq0,$ there is a time $T_{\delta}$ and a function $y^{\delta} \in C^1([0, T_\delta], H^{s})$ that satisfies \eqref{yt0}, as well as $y^{\delta} (\cdot, 0)= \mathcal{J}^{\delta}y(0)$.
\end{lem}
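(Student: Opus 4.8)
The plan is to read \eqref{yt0} not as a PDE on all of $H^s$ but as an ordinary differential equation posed on the finite-dimensional subspace onto which $\mathcal{J}^{\delta}$ projects. Write $V_{\delta}:=\mathcal{J}^{\delta}(L^2)$, the space of trigonometric polynomials whose wave numbers do not exceed $1/\delta$; this is finite dimensional, and on it all the norms $\|\cdot\|_{H^s}$, $s\geq 0$, are mutually equivalent. Once $y^{\delta}_t$ is isolated, every term on the right-hand side of \eqref{yt0} has the form $\mathcal{J}^{\delta}[\,\cdots\,]$ and therefore lies in $V_{\delta}$, and the initial datum $\mathcal{J}^{\delta}y(0)$ lies in $V_{\delta}$ as well. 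Hence the evolution closes on $V_{\delta}$, and it is natural to seek the solution there; this is precisely what will let a single time $T_{\delta}$ serve every $s$ at once.

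Concretely, I would rewrite \eqref{yt0} in the abstract form $\frac{d}{dt}y^{\delta}=F^{\delta}(y^{\delta})$, $y^{\delta}(0)=\mathcal{J}^{\delta}y(0)$, where $F^{\delta}\colon V_{\delta}\to V_{\delta}$ collects all the summands. For $y\in V_{\delta}$ the inner mollified derivatives reduce to honest derivatives, $\mathcal{J}^{\delta}y_x=y_x$, $\mathcal{J}^{\delta}y_{xx}=y_{xx}$, and so on (differentiation preserves the band limit), and every denominator occurring in $F^{\delta}$ is a power of $1+(\mathcal{J}^{\delta}y_x)^2\geq 1$. Thus $F^{\delta}$ is well defined and $V_{\delta}$-valued.

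The main step is to verify that $F^{\delta}$ is locally Lipschitz on $V_{\delta}$, and I would do this by factoring each summand through three maps: the bounded linear differentiation operators $y\mapsto(\partial_x^k y)_{k\le 4}$ on $V_{\delta}$; the pointwise rational nonlinearities $g(\partial_x y,\dots,\partial_x^4 y)$, which are smooth superposition operators on $C(\tone)$ because the relevant $g\colon\R^4\to\R$ are smooth everywhere their denominators are positive, and those denominators never drop below $1$; and finally the bounded linear projection $\mathcal{J}^{\delta}\colon C(\tone)\to V_{\delta}$. On a ball $\{\|y\|\le M\}$ the values $(\partial_x^k y(x))_k$ lie in a fixed compact set, so the superposition operators obey $\|g(u)-g(v)\|_{\infty}\le(\sup_K|\nabla g|)\max_k\|\partial_x^k(u-v)\|_{\infty}$; composing with the bounded linear maps, and using equivalence of all norms on $V_{\delta}$, shows $F^{\delta}$ is Lipschitz on bounded sets, with a constant depending on $\delta$ through the factors $\delta^{-k}$ implicit in differentiating band-limited functions (cf. \eqref{m1}) and on $M$.

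With the local Lipschitz bound in hand, the Picard theorem in the Banach (here finite-dimensional) space $V_{\delta}$ yields a unique maximal solution $y^{\delta}\in C^1([0,T_{\delta}],V_{\delta})$ with $y^{\delta}(0)=\mathcal{J}^{\delta}y(0)$; continuity of $t\mapsto F^{\delta}(y^{\delta}(t))$ upgrades the integral solution to a genuine $C^1$ solution. Since $V_{\delta}\subset H^s$ with equivalent norms for every $s\geq 0$, this $y^{\delta}$ belongs to $C^1([0,T_{\delta}],H^s)$ for all $s\geq 0$, which is the assertion. The one place requiring genuine care is the Lipschitz estimate for the rational nonlinearities; everything else is forced by the finite-dimensional reduction. (Here $T_{\delta}$ may, and generally will, degenerate as $\delta\to 0$, since the Lipschitz constant blows up like a power of $1/\delta$; removing that dependence is exactly the content of the uniform bounds established afterward.)
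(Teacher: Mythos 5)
Your proposal is correct, and it reaches the conclusion by a route that is more concrete than, and somewhat different from, the one the paper sketches. The paper's proof (which is only a few lines) invokes the Picard theorem in the Banach space $H^s$ itself: the point there is that "the abundance of mollifiers" makes the right-hand side of \eqref{yt0} a locally Lipschitz map from $H^s$ to $H^s$, because every derivative loss is absorbed by an outer or inner $\mathcal{J}^{\delta}$ via the smoothing estimate \eqref{m1}; this is the standard Majda--Bertozzi-style argument for mollified approximations. You instead observe that, since the data is $\mathcal{J}^{\delta}y(0)$ and every term of the vector field carries an outer $\mathcal{J}^{\delta}$, the dynamics closes on the finite-dimensional space $V_{\delta}=\mathcal{J}^{\delta}(L^2)$ of band-limited trigonometric polynomials, and you then run classical finite-dimensional ODE theory there, getting the local Lipschitz property from smoothness of the rational nonlinearities (denominators bounded below by $1$) together with equivalence of all norms on $V_{\delta}$. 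Your reduction buys two things: the statement "$y^{\delta}\in C^1([0,T_{\delta}],H^s)$ for every $s\geq 0$ with a single $T_{\delta}$" comes for free, since the solution lives in $V_{\delta}$ where all $H^s$ norms are equivalent (in the Banach-space route one must note separately that the Picard iterates, hence the solution, stay in the range of $\mathcal{J}^{\delta}$); and all infinite-dimensional functional analysis disappears. What it costs is generality: it leans on the periodic setting to make $V_{\delta}$ finite dimensional, whereas the paper remarks that periodicity "is not essential" --- on the line the range of the frequency cutoff is infinite dimensional and your argument would have to be rephrased in Banach-space form (using Bernstein-type inequalities in place of norm equivalence), which is essentially the paper's version. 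Both arguments are sound; yours is a legitimate, self-contained proof of the lemma as stated.
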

\begin{proof}
Since the initial data is mollified, it is in any Sobolev space.  With the abundance of mollifiers present on the right-hand side of the
evolution equation, it is not difficult to demonstrate that the relevant operator is a Lipschitz map.  The Picard Theorem applies, 
leading to the conclusion of the theorem.  We omit further details. 
\end{proof}

The next two lemmas concern some uniform bounds on the solution of the equation \eqref{yt0}. In the first lemma we prove an $H^4$ bound, and  we then use it in the subsequent lemma for a $H^5$ bound.
\begin{lem}\label{lem00}
	Assume $y^{\delta}$ is the solution of the equation \eqref{yt}. Then there exists $T= T(\al)$ and $C= C(y_0, \al)$, independent of $\delta$, so that for any  $0< t<   \f{\ln(1+ \f{\ga}{\|y_0\|^{m- 2}_{H^4}})}{\ga}$ ($m$ and $\ga$ to be defined later),
	\begin{eqnarray}
	\sup_{0< t< T} \|y^{\delta}\|^2_{H^4}+   \int  \f{ (\mathcal{J}^{\delta} y^{\delta}_{xx})^2+ (\partial_x^6 \mathcal{J}^{\delta} y^{\delta})^2}{(1+ (\mathcal{J}^{\delta} y^{\delta}_x)^2)^2}   dx \leq C
	\end{eqnarray}
\end{lem}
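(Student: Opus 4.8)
The plan is to run an energy estimate at the $H^4$ level for the mollified equation \eqref{yt0}, using its fourth-order parabolic structure to produce the two coercive integrals in the statement, while keeping every constant independent of $\delta$. Abbreviate $g:=\mathcal{J}^{\delta}y^{\delta}$, so that all nonlinear coefficients in \eqref{yt0} depend only on $g$, and take as energy $E(t):=\|y^{\delta}\|_{L^2}^2+\|\partial_x^4 y^{\delta}\|_{L^2}^2$, which is comparable to $\|y^{\delta}\|_{H^4}^2$. Differentiating $E$ and inserting $y^{\delta}_t$ from \eqref{yt0}, I would sort the resulting terms by differential order. The only top-order term is $\alpha^2(\alpha+3)\,\mathcal{J}^{\delta}\big[\tfrac{1}{\sqrt{1+g_x^2}}\tfrac{d^2\kappa^{\delta}}{dx^2}\big]$, whose principal part is $\alpha^2(\alpha+3)\,\mathcal{J}^{\delta}\big[\tfrac{g_{xxxx}}{(1+g_x^2)^2}\big]$. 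Pairing this operator, at the $\dot H^4$ level, with $\partial_x^4 y^{\delta}$ and, at the $L^2$ level, with $y^{\delta}$, and integrating by parts while using that $\mathcal{J}^{\delta}$ is self-adjoint and commutes with $\partial_x$, produces — up to lower-order commutators — precisely $-c\int\frac{(\partial_x^6 g)^2+(g_{xx})^2}{(1+g_x^2)^2}\,dx$ with $c=\alpha^2(\alpha+3)>0$ since $\alpha>0$. This is the coercive dissipation $\mathcal{D}$ appearing in the statement.

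The second step is to absorb everything else. Each remaining term — the destabilizing second-order term $(\alpha-1)\mathcal{J}^{\delta}[\tfrac{g_{xx}}{1+g_x^2}]$, the quadratic and cubic curvature terms, the arclength term $\mathcal{J}^{\delta}[\sqrt{1+g_x^2}]$, and the right-hand side $\alpha^2(\alpha+3)\mathcal{J}^{\delta}[g_x\,\kappa^{\delta}\,\kappa^{\delta}_x]$ — is of order strictly less than six when paired into the energy. Using the Gagliardo--Nirenberg interpolation inequality together with the embedding $H^s\hookrightarrow L^\infty$, I would bound each contribution by a polynomial in $E$ times a power $\theta<1$ of $\mathcal{D}$; Young's inequality then trades $\mathcal{D}^{\theta}$ for a small multiple of $\mathcal{D}$, absorbed into the coercive term, plus a polynomial-in-$E$ remainder. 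Crucially, by \eqref{m0}--\eqref{m1} and the fact that $\mathcal{J}^{\delta}$ is a bounded projection commuting with $\partial_x$, none of these bounds see the regularization parameter, so all constants are uniform in $\delta$.

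Combining the two steps yields a differential inequality of the form $\tfrac{d}{dt}E+c\,\mathcal{D}\le \gamma\big(E+E^{m/2}\big)$ for constants $c,\gamma>0$ and an integer $m$ fixed by the highest-degree nonlinearity. Discarding $c\,\mathcal{D}\ge 0$ and applying the nonlinear Gronwall inequality of Lemma \ref{gr} to the integrated form gives a bound $E(t)\le C(y_0,\alpha)$ valid exactly on the window $0<t<\tfrac{1}{\gamma}\ln\!\big(1+\gamma/\|y_0\|_{H^4}^{m-2}\big)$, which is the stated interval; reinserting this bound and integrating the full inequality over the interval then also controls the time integral of $\mathcal{D}$, completing the estimate. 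Since every constant along the way is $\delta$-independent, so is $C$.

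The main obstacle is the top-order integration by parts. When the derivatives are distributed by the Leibniz rule across $\tfrac{g_{xxxx}}{(1+g_x^2)^2}$, the terms in which a derivative falls on the variable weight $(1+g_x^2)^{-2}$ rather than on $g_{xxxx}$ generate products carrying up to five or six derivatives of $g$ in total. One must verify, by careful derivative counting and Gagliardo--Nirenberg interpolation, that every such commutator is genuinely subordinate to $\int\frac{(\partial_x^6 g)^2}{(1+g_x^2)^2}\,dx$, so that it is absorbed rather than competing with the dissipation; carrying this out simultaneously for the several distinct denominators $(1+g_x^2)^{k/2}$ occurring in \eqref{yt0}, and with constants independent of $\delta$, is the technical crux of the lemma.
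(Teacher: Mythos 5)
Your proposal follows essentially the same route as the paper's proof: the same energy $E(t)=\|y^{\delta}\|_{L^2}^2+\|\partial_x^4 y^{\delta}\|_{L^2}^2$, the same two-level ($L^2$ and $\dot H^4$) energy estimates extracting the coercive terms $\int \frac{(\mathcal{J}^{\delta}y^{\delta}_{xx})^2}{(1+(\mathcal{J}^{\delta}y^{\delta}_x)^2)^2}\,dx$ and $\int \frac{(\mathcal{J}^{\delta}\partial_x^6 y^{\delta})^2}{(1+(\mathcal{J}^{\delta}y^{\delta}_x)^2)^2}\,dx$ from the principal part of the curvature term, the same absorption of all remaining terms via Gagliardo--Nirenberg, Sobolev embedding, and Young's inequality into a small multiple of the dissipation plus polynomials in $E$, and the same closing step via the nonlinear Gronwall inequality of Lemma \ref{gr} yielding the time window $0<t<\frac{1}{\gamma}\ln\big(1+\gamma/\|y_0\|_{H^4}^{m-2}\big)$. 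The technical crux you identify (commutator terms when derivatives fall on the weight $(1+(\mathcal{J}^{\delta}y^{\delta}_x)^2)^{-2}$, handled with $\delta$-uniform constants) is exactly what the paper's term-by-term estimates $J_1,\dots,J_9$ carry out.
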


\begin{proof}
	During the proof, we assume that $\|y^{\delta}\|^2_{L^2}+ \|\partial^4_x y^{\delta}\|^2_{L^2} > 1$, otherwise  there is nothing  to prove. 
	
	In order to prove this lemma, we combine two energy estimates, one on $\|y^{\de}\|_{L^2}$, and the other one on $\|\partial^4_x y^{\de}\|_{L^2}$. Indeed,	
	\begin{eqnarray}\label{en00}
	&&\f{1}{2} \partial_t \|y^{\delta}\|^2_{L^2}+ (\al- 1) \int (\mathcal{J}^{\delta} y^{\delta}) \cdot  \bigg[\f{ (\mathcal{J}^{\delta} y^{\delta}_{xx})}{1+ (\mathcal{J}^{\delta} y^{\delta}_x)^2}\bigg] dx\\ \nonumber
	&&+ \bigg(1+ \f{1}{2} \al^2\bigg)  \int (\mathcal{J}^{\delta} y^{\delta}) \cdot\mathcal{J}^{\delta} \bigg[\f{(\mathcal{J} y^{\delta}_{xx})^2}{(1+ (\mathcal{J}^{\delta} y^{\delta}_x)^2)^{\f{5}{2}}} \bigg] dx+ \al^2 (\al+ 3) \int  (\mathcal{J}^{\delta} y^{\delta}) \cdot  \bigg[\f{1}{\sqrt{1+ (\mathcal{J}^{\delta} y^{\delta}_x)^2}} \cdot\f{d^2 \kappa^{\delta}}{d x^2}\bigg] dx\\ \nonumber
	&&+ \int (\mathcal{J}^{\delta} y^{\delta}) \cdot  \bigg[\sqrt{1+ (\mathcal{J}^{\delta} \mathcal{J}^{\delta} y^{\delta}_x)^2}\bigg] dx+  \bigg(2 \al+ 5 \al^2- \f{1}{3} \al^3\bigg)  \int (\mathcal{J}^{\delta} y^{\delta}) \cdot  \bigg[\f{(\mathcal{J} y^{\delta}_{xx})^3}{(1+ (\mathcal{J}^{\delta} y^{\delta}_x)^2)^4} \bigg] dx\\ \nonumber
	&& = \al^2 (\al+ 3) \int (\mathcal{J}^{\delta} y^{\delta}) \cdot  \bigg[(\mathcal{J}^{\delta}  y^{\delta}_x) \cdot \kappa^{\delta} \cdot \f{d \kappa^{\delta}}{d x}\bigg] dx.
	\end{eqnarray} 
We use integration by parts to arrive at a more convenient form for this expression.

The first term we simplify produces a useful term in the left hand side of \eqref{en00}, namely $\int  \f{ (\mathcal{J}^{\delta} y^{\delta}_{xx})^2}{(1+ (\mathcal{J}^{\delta} y^{\delta}_x)^2)^2}   dx$. Indeed, when we substitute from
\eqref{twoDerivativesKappa} into the fourth term on the left-hand side of \eqref{en00}, we find 
		\begin{eqnarray*}
		&&\al^2 (\al+ 3) \int  ( \mathcal{J}^{\delta} y^{\delta}) \cdot  \bigg[\f{1}{\sqrt{1+ (\mathcal{J}^{\delta} y^{\delta}_x)^2}} \cdot\f{d^2 \kappa^{\delta}}{d x^2}\bigg]\ dx= \al^2 (\al+ 3) \int  \f{(\mathcal{J}^{\delta} y^{\delta}) \cdot (\mathcal{J}^{\delta} y^{\delta}_{xxxx})}{(1+ (\mathcal{J}^{\delta} y^{\delta}_x)^2)^2}\ dx\\
		&&- 3 \al^2 (\al+ 3) \int \f{ (\mathcal{J}^{\delta} y^{\delta}) \cdot (\mathcal{J}^{\delta} y^{\delta}_{xx})^3}{(1+ (\mathcal{J}^{\delta} y^{\delta}_x)^2)^3}\ dx-  9 \al^2 (\al+ 3) \int \f{ (\mathcal{J}^{\delta} y^{\delta})  (\mathcal{J}^{\delta} y^{\delta}_x) (\mathcal{J}^{\delta} y^{\delta}_{xx}) (\mathcal{J}^{\delta} y^{\delta}_{xxx})}{(1+ (\mathcal{J}^{\delta} y^{\delta}_x)^2)^3}\ dx\\
		&&+ 15 \al^2 (\al+ 3) \int \f{ ( \mathcal{J}^{\delta} y^{\delta}) \cdot (\mathcal{J}^{\delta} y^{\delta}_x)^2 (\mathcal{J}^{\delta} y^{\delta}_{xx})^3 }{(1+ (\mathcal{J}^{\delta} y^{\delta}_x)^2)^4}\ dx.
	\end{eqnarray*}
The term we wish to draw out can now be found after integrating by parts twice:
	\begin{eqnarray*}
	&& \int  \f{(\mathcal{J}^{\delta} y^{\delta}) \cdot (\mathcal{J}^{\delta} y^{\delta}_{xxxx})}{(1+ (\mathcal{J}^{\delta} y^{\delta}_x)^2)^2}= -  \int \f{(\mathcal{J}^{\delta} y^{\delta}_{xxx}) \cdot (\mathcal{J}^{\delta} y^{\delta}_x)}{(1+ (\mathcal{J}^{\delta} y^{\delta}_{x})^2)^2} dx+ 4 \int \f{(\mathcal{J}^{\delta} y^{\delta}) \cdot (\mathcal{J}^{\delta} y^{\delta}_x) \cdot (\mathcal{J}^{\delta} y^{\delta}_{xx}) (\mathcal{J}^{\delta} y^{\delta}_{xxx})}{(1+ (\mathcal{J}^{\delta} y^{\delta}_x)^2)^3} dx\\
	&&=  \int \f{(\mathcal{J}^{\delta} y^{\delta}_{xx})^2}{(1+ (\mathcal{J}^{\delta} y^{\delta}_{x})^2)^2} dx- 4 \int \f{ (\mathcal{J}^{\delta} y^{\delta}_x)^2 \cdot (\mathcal{J}^{\delta} y^{\delta}_{xx})^2}{(1+ (\mathcal{J}^{\delta} y^{\delta}_{x})^2)^3} dx + 4 \int \f{(\mathcal{J}^{\delta} y^{\delta}) \cdot (\mathcal{J}^{\delta} y^{\delta}_x) \cdot (\mathcal{J}^{\delta} y^{\delta}_{xx}) (\mathcal{J}^{\delta} y^{\delta}_{xxx})}{(1+ (\mathcal{J}^{\delta} y^{\delta}_x)^2)^3} dx.
\end{eqnarray*}
Our conclusion is
	\begin{eqnarray*}
	&&\al^2 (\al+ 3) \int  (\mathcal{J}^{\delta} y^{\delta}) \cdot  \bigg[\f{1}{\sqrt{1+ (\mathcal{J}^{\delta} y^{\delta}_x)^2}} \cdot\f{d^2 \kappa^{\delta}}{d x^2}\bigg] dx= \al^2 (\al+ 3) \int \f{(\mathcal{J}^{\delta} y^{\delta}_{xx})^2 }{(1+ (\mathcal{J}^{\delta} y^{\delta}_{x})^2)^2} dx\\
	&&- 4 \al^2 (\al+ 3) \int \f{ (\mathcal{J}^{\delta} y^{\delta}_x)^2 \cdot (\mathcal{J}^{\delta} y^{\delta}_{xx})^2}{(1+ (\mathcal{J}^{\delta} y^{\delta}_{x})^2)^3} dx  - 3 \al^2 (\al+ 3) \int \f{(\mathcal{J}^{\delta} y^{\delta}) \cdot (\mathcal{J}^{\delta} y^{\delta}_{xx})^3 }{(1+ (\mathcal{J}^{\delta} y^{\delta}_{x})^2)^3} dx\\
	&&- 5 \al^2 (\al+ 3)  \int \f{(\mathcal{J}^{\delta} y^{\delta}) \cdot (\mathcal{J}^{\delta} y^{\delta}_x) \cdot (\mathcal{J}^{\delta} y^{\delta}_{xx}) (\mathcal{J}^{\delta} y^{\delta}_{xxx})}{(1+ (\mathcal{J}^{\delta} y^{\delta}_x)^2)^3} dx
+ 15 \al^2 (\al+ 3) \int \f{(\mathcal{J}^{\delta} y^{\delta}) \cdot (\mathcal{J}^{\delta} y^{\delta}_x)^2 \cdot (\mathcal{J}^{\delta} y^{\delta}_{xx})^3 }{(1+ (\mathcal{J}^{\delta} y^{\delta}_x)^2)^4} dx.\\
\end{eqnarray*}

For the right-hand side of \eqref{en00}, we substitute from \eqref{oneDerivativeKappa}, finding
	\begin{eqnarray*}
	\al^2 (\al+ 3) \int (\mathcal{J}^{\delta} y^{\delta}) \cdot  \bigg[( \mathcal{J}^{\delta} y^{\delta}_x)  \cdot \kappa^{\delta} \cdot \f{d \kappa^{\delta}}{d x}\bigg] dx&=&  \al^2 (\al+ 3)  \int \f{( \mathcal{J}^{\delta} y^{\delta}) \cdot ( \mathcal{J}^{\delta} y^{\delta}_x)  \cdot( \mathcal{J}^{\delta} y^{\delta}_{xx})( \mathcal{J}^{\delta} y^{\delta}_{xxx})}{(1+ (\mathcal{J}^{\delta} y^{\delta}_x)^2)^3} dx
	\\
	&-& 3 \al^2 (\al+ 3) \int \f{( \mathcal{J}^{\delta} y^{\delta}) \cdot (\mathcal{J}^{\delta} y^{\delta}_x)^2 \cdot (\mathcal{J}^{\delta} y^{\delta}_{xx})^3 }{(1+ (\mathcal{J}^{\delta} y^{\delta}_x)^2)^4} dx.
\end{eqnarray*}

We also rewrite the fifth term on the left-hand side of \eqref{en00} as
\begin{eqnarray*}
\int ( \mathcal{J}^{\delta} y^{\delta})\sqrt{1+ (\mathcal{J}^{\delta} y^{\delta}_x)^2}\ dx= \int \f{( \mathcal{J}^{\delta} y^{\delta})\bigg(1+ (\mathcal{J}^{\delta} y^{\delta}_x)^2\bigg)}{\sqrt{1+ (\mathcal{J}^{\delta} y^{\delta}_x)^2}}\ dx.
\end{eqnarray*}

With all of these considerations, \eqref{en00} now may be written as
	\begin{eqnarray}\label{en10}
&&\f{1}{2} \partial_t \|y^{\delta}\|^2_{L^2}+ \al^2 (\al+ 3) \int \f{(\mathcal{J}^{\delta} y^{\delta}_{xx})^2}{(1+ (\mathcal{J}^{\delta} y^{\delta}_{x})^2)^2} dx= - (\al- 1) \int \f{( \mathcal{J}^{\delta} y^{\delta})\cdot( \mathcal{J}^{\delta} y^{\delta}_{xx})}{(1+ (\mathcal{J}^{\delta} y^{\delta}_x)^2)} dx+\\ \nonumber
&& - \int \f{( \mathcal{J}^{\delta} y^{\delta})\bigg(1+ (\mathcal{J}^{\delta} y^{\delta}_x)^2\bigg)}{\sqrt{1+ (\mathcal{J}^{\delta} y^{\delta}_x)^2}} dx+  3 \al^2 (\al+ 3) \int \f{( \mathcal{J}^{\delta} y^{\delta})\cdot (\mathcal{J}^{\delta} y^{\delta}_{xx})^3 }{(1+ (\mathcal{J}^{\delta} y^{\delta}_{x})^2)^3} dx\\ \nonumber
&&+ 4 \al^2 (\al+ 3) \int \f{ (\mathcal{J}^{\delta} y^{\delta}_x)^2 \cdot (\mathcal{J}^{\delta} y^{\delta}_{xx})^2}{(1+ (\mathcal{J}^{\delta} y^{\delta}_{x})^2)^3} dx+
6 \al^2 (\al+ 3) \int \f{( \mathcal{J}^{\delta} y^{\delta})\cdot ( \mathcal{J}^{\delta} y^{\delta}_x)  \cdot( \mathcal{J}^{\delta} y^{\delta}_{xx})( \mathcal{J}^{\delta} y^{\delta}_{xxx})}{(1+ (\mathcal{J}^{\delta} y^{\delta}_x)^2)^3} dx
\\ \nonumber
&&- 15 \al^2 (\al+ 3) \int \f{( \mathcal{J}^{\delta} y^{\delta})\cdot (\mathcal{J}^{\delta} y^{\delta}_x)^2 \cdot (\mathcal{J}^{\delta} y^{\delta}_{xx})^3 }{(1+ (\mathcal{J}^{\delta} y^{\delta}_{xx})^2)^4} dx-  \bigg(1+ \f{1}{2} \al^2\bigg)  \int  \bigg[\f{(\mathcal{J}^{\delta} y^{\delta}) \cdot (\mathcal{J} y^{\delta}_{xx})^2}{(1+ (\mathcal{J}^{\delta} y^{\delta}_x)^2)^{\f{5}{2}}} \bigg] dx\\ \nonumber
&&+ \bigg(2 \al+ 5 \al^2- \f{1}{3} \al^3\bigg)  \int   \bigg[\f{(\mathcal{J}^{\delta} y^{\delta}) \cdot (\mathcal{J} y^{\delta}_{xx})^3}{(1+ (\mathcal{J}^{\delta} y^{\delta}_x)^2)^4} \bigg] dx.
\end{eqnarray}
All the terms on the right hand side are controlled by terms of 
the form of $C \bigg( \|y^{\delta}\|^a_{L^2}+ \|\partial^4_x y^{\delta}\|^a_{L^2}\bigg) $, where $2 \leq a \leq 4$. 
Overall, we have the following  simplified inequality:
\begin{eqnarray}\label{en30}
\f{1}{2} \partial_t \|y^{\delta}\|^2_{L^2}+ \al^2 (\al+ 3) \int \f{(\mathcal{J}^{\delta} y^{\delta}_{xx})^2}{(1+ (\mathcal{J}^{\delta} y^{\delta}_{x})^2)^2} dx \leq  C \bigg( \|y^{\delta}\|^2_{L^2}+ \|\partial^4_x y^{\delta}\|^2_{L^2}\bigg)+ \bigg( \|y^{\delta}\|^4_{L^2}+ \|\partial^4_x y^{\delta}\|^4_{L^2}\bigg).
\end{eqnarray}
This is straightforward to see (it mainly consists of counting derivatives) and we omit further details of the proof 
of \eqref{en30}.

We now turn our attention to the rest of the energy estimate. We take four spatial derivatives of \eqref{yt0}, 
and then find its inner product with $\partial^4_x y^{\delta}:$
\begin{eqnarray}\label{es0}
	&&\hspace{3em}\f{1}{2} \partial_t \|\partial_x^4 y^{\delta}\|^2_{L^2}+ (\al- 1) \int (\mathcal{J}^{\delta} \partial_x^4 y^{\delta}) \cdot \partial_x^4 \bigg[\f{ (\mathcal{J}^{\delta} y^{\delta}_{xx})}{1+ (\mathcal{J}^{\delta} y^{\delta}_x)^2}\bigg] dx+\\ \nonumber
&&+ \bigg(1+ \f{1}{2} \al^2\bigg)  \int (\mathcal{J}^{\delta} \partial_x^4 y^{\delta}) \cdot \partial_x^4 \bigg[\f{(\mathcal{J} y^{\delta}_{xx})^2}{(1+ (\mathcal{J}^{\delta} y^{\delta}_x)^2)^{\f{5}{2}}} \bigg] dx+ \al^2 (\al+ 3) \int  (\mathcal{J}^{\delta} \partial_x^4 y^{\delta}) \cdot \partial_x^4 \bigg[\f{1}{\sqrt{1+ (\mathcal{J}^{\delta} y^{\delta}_x)^2}} \cdot\f{d^2 \kappa^{\delta}}{d x^2}\bigg] dx\\ \nonumber
&&+ \int (\mathcal{J}^{\delta} \partial_x^4 y^{\delta}) \cdot \partial_x^4 \bigg[\sqrt{1+ ( \mathcal{J}^{\delta} y^{\delta}_x)^2}\bigg] dx+  \bigg(2 \al+ 5 \al^2- \f{1}{3} \al^3\bigg)  \int (\mathcal{J}^{\delta} \partial_x^4 y^{\delta}) \cdot \partial_x^4 \bigg[\f{(\mathcal{J} y^{\delta}_{xx})^3}{(1+ (\mathcal{J}^{\delta} y^{\delta}_x)^2)^4} \bigg] dx\\ \nonumber
&& = \al^2 (\al+ 3) \int (\mathcal{J}^{\delta} \partial_x^4 y^{\delta}) \cdot  \partial_x^4 \bigg[(\mathcal{J}^{\delta}  y^{\delta}_x) \cdot \kappa^{\delta} \cdot \f{d \kappa^{\delta}}{d x}\bigg] dx.
\end{eqnarray}

As before, for the fourth term on the left-hand side of \eqref{es0}, we substitute from \eqref{twoDerivativesKappa}:
\begin{eqnarray*}
&&\al^2 (\al+ 3) \int  (\mathcal{J}^{\delta} \partial_x^4 y^{\delta}) \cdot \partial_x^4 \bigg[\f{1}{\sqrt{1+ (\mathcal{J}^{\delta} y^{\delta}_x)^2}} \cdot\f{d^2 \kappa^{\delta}}{d x^2}\bigg] dx=  \al^2 (\al+ 3) \int  (\mathcal{J}^{\delta} \partial_x^6 y^{\delta}) \cdot \partial_x^2 \bigg[ \f{\mathcal{J}^{\delta} y^{\delta}_{xxxx}}{(1+ (\mathcal{J}^{\delta} y^{\delta}_x)^2)^2}  \bigg] dx\\
&&- 3 \al^2 (\al+ 3) \int  (\mathcal{J}^{\delta} \partial_x^6 y^{\delta}) \cdot \partial_x^2 \bigg[ \f{  (\mathcal{J}^{\delta} y^{\delta}_{xx})^3}{(1+ (\mathcal{J}^{\delta} y^{\delta}_x)^2)^3} \bigg] dx-  9 \al^2 (\al+ 3) \int  (\mathcal{J}^{\delta} \partial_x^6 y^{\delta}) \cdot \partial_x^2 \bigg[ \f{  (\mathcal{J}^{\delta} y^{\delta}_x) (\mathcal{J}^{\delta} y^{\delta}_{xx}) (\mathcal{J}^{\delta} y^{\delta}_{xxx})}{(1+ (\mathcal{J}^{\delta} y^{\delta}_x)^2)^3} \bigg] dx\\
&&+ 15 \al^2 (\al+ 3) \int  (\mathcal{J}^{\delta} \partial_x^6 y^{\delta}) \cdot \partial_x^2 \bigg[ \f{ ( \mathcal{J}^{\delta} y^{\delta}_x)^2 (\mathcal{J}^{\delta} y^{\delta}_{xx})^3 }{(1+ (\mathcal{J}^{\delta} y^{\delta}_x)^2)^4} \bigg] dx.
\end{eqnarray*}
We expand the first term on the right hand side of the above equality as follows:
\begin{eqnarray*}
 &&\al^2 (\al+ 3) \int  (\mathcal{J}^{\delta} \partial_x^4 y^{\delta}) \cdot \partial_x^4 \bigg[ \f{\mathcal{J}^{\delta} y^{\delta}_{xxxx}}{(1+ (\mathcal{J}^{\delta} y^{\delta}_x)^2)^2}  \bigg] dx=  \al^2 (\al+ 3) \int    \f{(\mathcal{J}^{\delta} \partial_x^6 y^{\delta})^2}{(1+ (\mathcal{J}^{\delta} y^{\delta}_x)^2)^2} dx\\
 &&- 4 \al^2 (\al+ 3) \int (\mathcal{J}^{\delta} \partial_x^6 y^{\delta}) \bigg[\f{(\mathcal{J}^{\delta} y^{\delta}_x) (\mathcal{J}^{\delta} y^{\delta}_{xx}) (\mathcal{J}^{\delta} \partial^5 y^{\delta})}{(1+ (\mathcal{J}^{\delta} y^{\delta}_x)^2)^3} \bigg] dx\\
 &&- 4 \al^2 (\al+ 3) \int (\mathcal{J}^{\delta} \partial_x^6 y^{\delta}) \partial \bigg[\f{ (\mathcal{J}^{\delta} y^{\delta}_x) (\mathcal{J}^{\delta} y^{\delta}_{xx}) (\mathcal{J}^{\delta} \partial^4 y^{\delta})}{(1+ (\mathcal{J}^{\delta} y^{\delta}_x)^2)^3} \bigg] dx.
\end{eqnarray*}

Integrating by parts twice, and using \eqref{oneDerivativeKappa}, we also have the formula
\begin{eqnarray*}
&&\al^2 (\al+ 3) \int (\mathcal{J}^{\delta} \partial_x^4 y^{\delta}) \cdot  \partial_x^4 \bigg[(\mathcal{J}^{\delta}  y^{\delta}_x) \cdot \kappa^{\delta} \cdot \f{d \kappa^{\delta}}{d x}\bigg] dx= \al^2 (\al+ 3) \int  (\mathcal{J}^{\delta} \partial_x^6 y^{\delta})  \partial^2 \bigg[\f{(\mathcal{J}^{\delta} y^{\delta}_{x}) (\mathcal{J}^{\delta} y^{\delta}_{xx}) (\mathcal{J}^{\delta} y^{\delta}_{xxx})}{(1+ (\mathcal{J}^{\delta} y^{\delta}_x)^2)^3}\bigg] dx\\
&&-3 \al^2 (\al+ 3) \int  (\mathcal{J}^{\delta} \partial_x^6 y^{\delta})  \partial^2 \bigg[ \f{ (\mathcal{J}^{\delta} y^{\delta}_x)^2 \cdot (\mathcal{J}^{\delta} y^{\delta}_{xx})^3}{(1+ (\mathcal{J}^{\delta} y^{\delta}_x)^2)^4} \bigg] dx.
\end{eqnarray*}

Therefore the identity \eqref{es0} becomes the following:
\begin{eqnarray}\label{es1}
&&\hspace{5em}\f{1}{2} \partial_t \|\partial_x^4 y^{\delta}\|^2_{L^2}+  \al^2 (\al+ 3) \int    \f{(\mathcal{J}^{\delta} \partial_x^6 y^{\delta})^2}{(1+ (\mathcal{J}^{\delta} y^{\delta}_x)^2)^2} dx=\\ \nonumber
&&= -(\al- 1) \int (\mathcal{J}^{\delta} \partial_x^6 y^{\delta}) \cdot \partial_x^2 \bigg[\f{ (\mathcal{J}^{\delta} y^{\delta}_{xx})}{1+ (\mathcal{J}^{\delta} y^{\delta}_x)^2}\bigg] dx- \bigg(1+ \f{1}{2} \al^2\bigg)  \int (\mathcal{J}^{\delta} \partial_x^6 y^{\delta}) \cdot \partial_x^2 \bigg[\f{(\mathcal{J} y^{\delta}_{xx})^2}{(1+ (\mathcal{J}^{\delta} y^{\delta}_x)^2)^{\f{5}{2}}} \bigg] dx
\\ \nonumber
&&+ 4 \al^2 (\al+ 3) \int (\mathcal{J}^{\delta} \partial_x^6 y^{\delta}) \bigg[\f{(\mathcal{J}^{\delta} y^{\delta}_x) (\mathcal{J}^{\delta} y^{\delta}_{xx}) (\mathcal{J}^{\delta} \partial^5 y^{\delta})}{(1+ (\mathcal{J}^{\delta} y^{\delta}_x)^2)^3} \bigg] dx\\ \nonumber
&&+ 4 \al^2 (\al+ 3) \int (\mathcal{J}^{\delta} \partial_x^6 y^{\delta}) \partial \bigg[\f{ (\mathcal{J}^{\delta} y^{\delta}_x) (\mathcal{J}^{\delta} y^{\delta}_{xx}) (\mathcal{J}^{\delta} \partial^4 y^{\delta})}{(1+ (\mathcal{J}^{\delta} y^{\delta}_x)^2)^3} \bigg] dx
\\ \nonumber
&&- \int (\mathcal{J}^{\delta} \partial_x^6 y^{\delta}) \cdot \partial_x^2 \bigg[\sqrt{1+ (\mathcal{J}^{\delta} y^{\delta}_x)^2}\bigg] dx-  \bigg(2 \al+ 5 \al^2- \f{1}{3} \al^3\bigg)  \int (\mathcal{J}^{\delta} \partial_x^6 y^{\delta}) \cdot \partial_x^2 \bigg[\f{(\mathcal{J} y^{\delta}_{xx})^3}{(1+ (\mathcal{J}^{\delta} y^{\delta}_x)^2)^4} \bigg] dx\\ \nonumber
&& 3 \al^2 (\al+ 3) \int  (\mathcal{J}^{\delta} \partial_x^6 y^{\delta}) \cdot \partial_x^2 \bigg[ \f{  (\mathcal{J}^{\delta} y^{\delta}_{xx})^3}{(1+ (\mathcal{J}^{\delta} y^{\delta}_x)^2)^3} \bigg] dx+ 10 \al^2 (\al+ 3) \int  (\mathcal{J}^{\delta} \partial_x^6 y^{\delta}) \cdot \partial_x^2 \bigg[ \f{  (\mathcal{J}^{\delta} y^{\delta}_x) (\mathcal{J}^{\delta} y^{\delta}_{xx}) (\mathcal{J}^{\delta} y^{\delta}_{xxx})}{(1+ (\mathcal{J}^{\delta} y^{\delta}_x)^2)^3} \bigg] dx\\ \nonumber
&&- 18 \al^2 (\al+ 3) \int  (\mathcal{J}^{\delta} \partial_x^6 y^{\delta}) \cdot \partial_x^2 \bigg[ \f{ ( \mathcal{J}^{\delta} y^{\delta}_x)^2 (\mathcal{J}^{\delta} y^{\delta}_{xx})^3 }{(1+ (\mathcal{J}^{\delta} y^{\delta}_x)^2)^4} \bigg] dx= J_1+ J_2+ \cdots + J_9.
\end{eqnarray}
We claim that we can reduce the right hand side of this equality into a manageable form. In fact we will show that, for some $m> 2$ and $C_1$ small enough, 
\begin{eqnarray}\label{est4}
\bigg|J_1+ \cdots + I_9\bigg| \leq C_1 \int    \f{(\mathcal{J}^{\delta} \partial_x^6 y^{\delta})^2}{(1+ (\mathcal{J}^{\delta} y^{\delta}_x)^2)^2} dx+ C_2 \bigg(\| y^{\delta}\|^2_{L^2}+ \|\partial_x^4 y^{\delta}\|^2_{L^2}\bigg)+ C_3 \bigg(\| y^{\delta}\|^m_{L^2}+ \|\partial_x^4 y^{\delta}\|^m_{L^2}\bigg).
\end{eqnarray}
To prove this, we find bounds for each of the terms $J_1, \cdots, J_9$. 
Instead of demonstrating the full bound for every single integral, we focus on the most singular part of each of $J_1, \cdots, J_9,$ with these most singular parts being the terms with the highest derivatives when distributing spatial 
derivatives according to the product rule.
We will label collections of the less singular terms as $G(t)$, which stands for good terms.\\

We begin with $J_{1},$ estimating its most singular term by means of Young's inequality:
\begin{eqnarray*}
|J_1| &=&\bigg|(\al- 1) \int (\mathcal{J}^{\delta} \partial_x^6 y^{\delta}) \cdot \partial_x^2 \bigg[\f{ (\mathcal{J}^{\delta} y^{\delta}_{xx})}{1+ (\mathcal{J}^{\delta} y^{\delta}_x)^2}\bigg] dx\bigg| \leq \bigg|(\al- 1) \int (\mathcal{J}^{\delta} \partial_x^6 y^{\delta}) \cdot  \bigg[\f{ (\mathcal{J}^{\delta} \partial_x^4 y^{\delta})}{1+ (\mathcal{J}^{\delta} y^{\delta}_x)^2}\bigg] dx\bigg|+ G(t)\\
& \leq& \f{1}{100} \bigg\|\f{\mathcal{J}^{\delta} \partial_x^6 y^{\delta}}{1+ (\mathcal{J}^{\delta} y^{\delta}_x)^2} \bigg\|^2_{L^2}+ C \| \mathcal{J}^{\delta} \partial_x^4 y^{\delta}\|^2_{L^2}+ G(t).
\end{eqnarray*}

We proceed similarly for the most singular term in $J_{2}:$
\begin{eqnarray*}
	|J_2| &=& \bigg|\bigg(1+ \f{1}{2} \al^2\bigg)  \int (\mathcal{J}^{\delta} \partial_x^6 y^{\delta}) \cdot \partial_x^2 \bigg[\f{(\mathcal{J} y^{\delta}_{xx})^2}{(1+ (\mathcal{J}^{\delta} y^{\delta}_x)^2)^{\f{5}{2}}} \bigg] dx\bigg| \leq C \bigg| \int  \f{(\mathcal{J}^{\delta} \partial_x^6 y^{\delta})}{1+ (\mathcal{J}^{\delta} y^{\delta}_x)} \cdot  \bigg[\f{(\mathcal{J}^{\delta} \partial_x^2 y^{\delta}) (\mathcal{J}^{\delta} \partial_x^4 y^{\delta})}{1+ (\mathcal{J}^{\delta} y^{\delta}_x)^{\f{3}{2}}}\bigg] dx\bigg|+ G(t)\\
	& \leq& \f{1}{100} \bigg\|\f{\mathcal{J}^{\delta} \partial_x^6 y^{\delta}}{1+ (\mathcal{J}^{\delta} y^{\delta}_x)^2} \bigg\|^2_{L^2}+ C \left\| \mathcal{J}^{\delta} \partial_x^2 y^{\delta} \right\|_{L^{\infty}}^2 
	\left\| \mathcal{J}^{\delta} \partial_x^4 y^{\delta}\right\|^2_{L^2}+ G(t).
\end{eqnarray*}
We now use the Sobelev as well as Gagliardo-Nirenberg inequalities to control $\| \mathcal{J}^{\delta} \partial_x^2 y^{\delta} \|_{L^{\infty}}:$
\begin{eqnarray*}
\|  \partial_x^2 y^{\delta} \|_{L^{\infty}} \leq  \| |\nabla|^{\f{1}{2}}\partial_x^2 y^{\delta} \|_{L^2}  \leq  \| y^{\delta} \|^{\f{3}{8}}_{L^2}  \|\partial_x^4 y^{\delta} \|^{\f{5}{8}}_{L^2}.
\end{eqnarray*}
This then implies
\begin{eqnarray*}
	|J_2| &\leq&  \f{1}{100} \bigg\|\f{\mathcal{J}^{\delta} \partial_x^6 y^{\delta}}{1+ (\mathcal{J}^{\delta} y^{\delta}_x)^2} \bigg\|^2_{L^2}+ C \bigg(\| y^{\delta} \|^{\f{3}{8}}_{L^2}  \|\partial_x^4 y^{\delta} \|^{\f{5}{8}}_{L^2}\bigg)^2 \| \partial_x^4 y^{\delta}\|^2_{L^2}+ G(t)\\
	&\leq& \f{1}{100} \left\|\f{\mathcal{J}^{\delta} \partial_x^6 y^{\delta}}{1+ (\mathcal{J}^{\delta} y^{\delta}_x)^2} 
	\right\|^2_{L^2}+ C \left(\left\| y^{\delta} \right\|^4_{L^2} + \left\|\partial_x^4 y^{\delta} \right\|^4_{L^2}\right)+ G(t).
\end{eqnarray*}

We turn our attention to estimating $J_{3};$ to begin, we have
\begin{eqnarray*}
	|J_3| & = & \left|4 \al^2 (\al+ 3) \int (\mathcal{J}^{\delta} \partial_x^6 y^{\delta}) \left[\f{(\mathcal{J}^{\delta} y^{\delta}_x) (\mathcal{J}^{\delta} y^{\delta}_{xx}) (\mathcal{J}^{\delta} \partial^5 y^{\delta})}{(1+ (\mathcal{J}^{\delta} y^{\delta}_x)^2)^3} \right] \ dx\right|\\
	& \leq& C \left| \int  \f{(\mathcal{J}^{\delta} \partial_x^6 y^{\delta})}{1+ (\mathcal{J}^{\delta} y^{\delta}_x)^2} \cdot  \left[\f{ (\mathcal{J}^{\delta}  y^{\delta}_x) (\mathcal{J}^{\delta} y_{xx}^{\delta}) (\mathcal{J}^{\delta} \partial_x^5 y^{\delta})}{(1+ (\mathcal{J}^{\delta} y^{\delta}_x)^2)^2}\right] \ dx\right|+ G(t)\\
	& \leq& \f{1}{100} \left\|\f{\mathcal{J}^{\delta} \partial_x^6 y^{\delta}}{1+ (\mathcal{J}^{\delta} y^{\delta}_x)^2} 
	\right\|^2_{L^2}+ C \left\| \mathcal{J}^{\delta} \partial_x^2 y^{\delta} \right\|_{L^{\infty}}^2 
	\left\| \mathcal{J}^{\delta} \partial_x^5 y^{\delta}\right\|^2_{L^2}+ G(t).
\end{eqnarray*}
Here we have used the fact that $\bigg|\f{ (\mathcal{J}^{\delta}  y^{\delta}_x) }{(1+ (\mathcal{J}^{\delta} y^{\delta}_x)^2)^2}\bigg| \leq 1$. 

We turn our attention to bounding $\| \mathcal{J}^{\delta} \partial_x^2 y^{\delta} \|_{L^{\infty}}^2$ and $\| \mathcal{J}^{\delta} \partial_x^5 y^{\delta}\|^2_{L^2}$ as follows. We use the Sobelev inequality as well as the
Gagliardo-Nirenberg inequality, finding  
\begin{eqnarray}\label{es2}
\| \mathcal{J}^{\delta} \partial_x^2 y^{\delta} \|_{L^{\infty}} \leq \| \mathcal{J}^{\delta} |\nabla|^{\f{1}{2}}\partial_x^2 y^{\delta} \|_{L^2} \leq \| y^{\delta} \|^{\f{3}{8}}_{L^2} \|  \partial_x^4 y^{\delta} \|^{\f{5}{8}}_{L^2}.
\end{eqnarray}
Moreover,
\begin{eqnarray*}
 \| \mathcal{J}^{\delta} \partial_x^5 y^{\delta}\|_{L^2} \leq \| \mathcal{J}^{\delta} \partial^6_x y^{\delta} \|^{\f{1}{2}}_{L^2} \|  \partial_x^4 y^{\delta} \|^{\f{1}{2}}_{L^2} \leq C  \| \f{\mathcal{J}^{\delta} \partial_x^6 y^{\delta}}{1+ (\mathcal{J}^{\delta} y^{\delta}_x)^2} \|^{\f{1}{2}}_{L^2} \|  \partial_x^4 y^{\delta} \|^{\f{1}{2}}_{L^2} \|1+ (\mathcal{J}^{\delta} y^{\delta}_x)^2 \|^{\f{1}{2}}_{L^{\infty}},
\end{eqnarray*}
and also,
\begin{eqnarray*}
\|1+ (\mathcal{J}^{\delta} y^{\delta}_x)^2 \|_{L^{\infty}} \leq 1+  \|\mathcal{J}^{\delta} y^{\delta}_x \|^2_{L^{\infty}} \leq 1+  \|\mathcal{J}^{\delta} |\nabla|^{\f{1}{2}} y^{\delta}_x \|^2_{L^2} \leq 1+  \bigg(\|\mathcal{J}^{\delta} y^{\delta} \|^{\f{5}{8}}_{L^2}  \|\mathcal{J}^{\delta} \partial^4_x y^{\delta} \|^{\f{3}{8}}_{L^2}\bigg)^2.
\end{eqnarray*}

We may thus conclude our bound for $J_{3}:$
\begin{eqnarray*}
|J_3| &\leq& \f{1}{100} \bigg\|\f{\mathcal{J}^{\delta} \partial_x^6 y^{\delta}}{1+ (\mathcal{J}^{\delta} y^{\delta}_x)^2} \bigg\|^2_{L^2}+ C \| \f{\mathcal{J}^{\delta} \partial_x^6 y^{\delta}}{1+ (\mathcal{J}^{\delta} y^{\delta}_x)^2} \|_{L^2} \|  y^{\delta} \|^2_{L^2} \|  \partial_x^4 y^{\delta} \|^3_{L^2}+ C +  G(t)\\
&\leq &\f{2}{100} \bigg\|\f{\mathcal{J}^{\delta} \partial_x^6 y^{\delta}}{1+ (\mathcal{J}^{\delta} y^{\delta}_x)^2} \bigg\|^2_{L^2}+ C  \|  y^{\delta} \|^4_{L^2} \|  \partial_x^4 y^{\delta} \|^6_{L^2}+ C+ G(t)\\
&\leq &\f{2}{100} \bigg\|\f{\mathcal{J}^{\delta} \partial_x^6 y^{\delta}}{1+ (\mathcal{J}^{\delta} y^{\delta}_x)^2} \bigg\|^2_{L^2}+ C  \bigg(\|  y^{\delta} \|^{10}_{L^2}+ \|  \partial_x^4 y^{\delta} \|^{10}_{L^2}\bigg)+ G(t).
\end{eqnarray*}
Note that above we used the assumption that $\|  y^{\delta} \|^2_{L^2}+ \|  \partial_x^4 y^{\delta} \|^2_{L^2} \geq 1$ (otherwise there would be nothing to prove), and consequently $C < \bigg(\|  y^{\delta} \|^{10}_{L^2}+ \|  \partial_x^4 y^{\delta} \|^{10}_{L^2}\bigg)$.

We estimate $J_{4}$ similarly to how we estimated $J_{3}:$
\begin{eqnarray*}
	|J_4| & = & \bigg|4 \al^2 (\al+ 3) \int (\mathcal{J}^{\delta} \partial_x^6 y^{\delta}) \partial \bigg[\f{ (\mathcal{J}^{\delta} y^{\delta}_x) (\mathcal{J}^{\delta} y^{\delta}_{xx}) (\mathcal{J}^{\delta} \partial^4 y^{\delta})}{(1+ (\mathcal{J}^{\delta} y^{\delta}_x)^2)^3} \bigg] dx\bigg|\\
	& \leq& C \bigg| \int  \f{(\mathcal{J}^{\delta} \partial_x^6 y^{\delta})}{1+ (\mathcal{J}^{\delta} y^{\delta}_x)^2} \cdot  \bigg[\f{ (\mathcal{J}^{\delta}  y^{\delta}_x) (\mathcal{J}^{\delta} y_{xx}^{\delta}) (\mathcal{J}^{\delta} \partial_x^5 y^{\delta})}{(1+ (\mathcal{J}^{\delta} y^{\delta}_x)^2)^2}\bigg] dx\bigg|+ G(t)\\
	& \leq& \f{2}{100} \bigg\|\f{\mathcal{J}^{\delta} \partial_x^6 y^{\delta}}{1+ (\mathcal{J}^{\delta} y^{\delta}_x)^2} \bigg\|^2_{L^2}+ C  \bigg(\|  y^{\delta} \|^{10}_{L^2}+ \|  \partial_x^4 y^{\delta} \|^{10}_{L^2}\bigg)+ G(t).
\end{eqnarray*}

We next consider $J_{5},$ beginning as follows:
\begin{eqnarray*}
	|J_5| &=& \bigg|\int (\mathcal{J}^{\delta} \partial_x^6 y^{\delta}) \cdot \partial_x^2 \bigg[\f{1+ ( \mathcal{J}^{\delta} y^{\delta}_x)^2}{\sqrt{1+ (\mathcal{J}^{\delta}  y^{\delta}_x)^2} }\bigg] dx\bigg|\\
	& \leq& C \bigg| \int  (\mathcal{J}^{\delta} \partial_x^6 y^{\delta}) \cdot  \bigg[\f{ (\mathcal{J}^{\delta}  y^{\delta}_x)  (\mathcal{J}^{\delta} \partial_x^3 y^{\delta})}{\sqrt{1+ ( \mathcal{J}^{\delta} y^{\delta}_x)^2}}\bigg] dx\bigg|+ G(t)\\
	& \leq& \f{1}{100} \bigg\|\f{\mathcal{J}^{\delta} \partial_x^6 y^{\delta}}{1+ (\mathcal{J}^{\delta} y^{\delta}_x)^2} \bigg\|^2_{L^2}+ C \| (\mathcal{J}^{\delta}  y^{\delta}_x)  (\mathcal{J}^{\delta} \partial_x^3 y^{\delta}) \sqrt{1+ ( \mathcal{J}^{\delta} y^{\delta}_x)^2}\|^2_{L^2}+ G(t)\\
	& \leq& \f{1}{100} \bigg\|\f{\mathcal{J}^{\delta} \partial_x^6 y^{\delta}}{1+ (\mathcal{J}^{\delta} y^{\delta}_x)^2} \bigg\|^2_{L^2}+ C \| (\mathcal{J}^{\delta}  y^{\delta}_x)\|^2_{L^{\infty}}  \|(\mathcal{J}^{\delta} \partial_x^3 y^{\delta})\|^2_{L^2} \|\sqrt{1+ ( \mathcal{J}^{\delta} y^{\delta}_x)^2}\|^2_{L^{\infty}}+ G(t).
\end{eqnarray*}
By the Sobelev and Gagliardo-Nirenberg inequalities, we have
\begin{eqnarray*}
\left\| (\mathcal{J}^{\delta}  y^{\delta}_x)\right\|_{L^{\infty}} \leq 
\left\| |\nabla|^{\f{1}{2}} y^{\delta}_x\right\|_{L^2} \leq \left\|y^{\delta}\right\|^{\f{5}{8}}_{L^2} 
\left\|\partial^4_x y^{\delta} \right\|^{\f{3}{8}}_{L^2},
\end{eqnarray*} 
as well as
\begin{eqnarray*}
	\left\|\sqrt{1+ ( \mathcal{J}^{\delta} y^{\delta}_x)^2}\right\|_{L^{\infty}}^2 \leq C \bigg(1+  \|\mathcal{J}^{\delta}  y^{\delta}_x \|_{L^{\infty}}\bigg)^2 \leq C \bigg( 1+ \| |\nabla|^{\f{1}{2}} y^{\delta}_x\|_{L^2}\bigg)^2 \leq C \bigg( 1+  \|y^{\delta}\|^{\f{5}{8}}_{L^2} \|\partial^4_x y^{\delta} \|^{\f{3}{8}}_{L^2}\bigg)^2.
\end{eqnarray*} 
Moreover,
\begin{eqnarray*}
	\| (\mathcal{J}^{\delta} \partial_x^3 y^{\delta})\|_{L^2}  \leq \|y^{\delta}\|^{\f{1}{4}}_{L^2} \|\partial^4_x y^{\delta} \|^{\f{3}{4}}_{L^2}.
\end{eqnarray*} 
We may then conclude our bound for $J_{5}$ as
\begin{eqnarray*}
	|J_5|	& \leq& \f{1}{100} \bigg\|\f{\mathcal{J}^{\delta} \partial_x^6 y^{\delta}}{1+ (\mathcal{J}^{\delta} y^{\delta}_x)^2} \bigg\|^2_{L^2}+ C \|y^{\delta}\|^3_{L^2} \|\partial^4_x y^{\delta} \|^3_{L^2}+ 1+ G(t)\\
	&\leq& \f{1}{100} \bigg\|\f{\mathcal{J}^{\delta} \partial_x^6 y^{\delta}}{1+ (\mathcal{J}^{\delta} y^{\delta}_x)^2} \bigg\|^2_{L^2}+ C \|y^{\delta}\|^4_{L^2} + \|\partial^4_x y^{\delta} \|^4_{L^2}+ C \|y^{\delta}\|^6_{L^2} + \|\partial^4_x y^{\delta} \|^6_{L^2}+ G(t).
\end{eqnarray*}

We begin the estimate for $J_{6}$ similarly to how we estimated $J_{3}$ above:
\begin{eqnarray*}
 |J_6| &  = & \left| \bigg(2 \al+ 5 \al^2- \f{1}{3} \al^3\bigg)  \int (\mathcal{J}^{\delta} \partial_x^6 y^{\delta}) \cdot \partial_x^2 \bigg[\f{(\mathcal{J} y^{\delta}_{xx})^3}{(1+ (\mathcal{J}^{\delta} y^{\delta}_x)^2)^4} \bigg] dx \right|\\
 & \leq& C \bigg|   \int \f{(\mathcal{J}^{\delta} \partial_x^6 y^{\delta})}{(1+ (\mathcal{J}^{\delta} y^{\delta}_x)^2)} \cdot  \bigg[\f{(\mathcal{J}^{\delta} y^{\delta}_{xx})^2 (\mathcal{J}^{\delta} \partial^4 y^{\delta})}{(1+ (\mathcal{J}^{\delta} y^{\delta}_x)^2)^3} \bigg] dx \bigg|+ G(t)\\
 & \leq& \f{2}{100} \bigg\|\f{\mathcal{J}^{\delta} \partial_x^6 y^{\delta}}{1+ (\mathcal{J}^{\delta} y^{\delta}_x)^2} \bigg\|^2_{L^2}+ C  \bigg\|(\mathcal{J}^{\delta} y^{\delta}_{xx})^2 (\mathcal{J}^{\delta} \partial^4 y^{\delta}) \bigg\|_{L^2}+ G(t).
\end{eqnarray*}
We then make use of relation \eqref{es2}, finding
\begin{eqnarray*}
\bigg\|(\mathcal{J}^{\delta} y^{\delta}_{xx})^2 (\mathcal{J}^{\delta} \partial^4 y^{\delta}) \bigg\|_{L^2} \leq \| y^{\delta}\|_{L^2}^{\f{3}{2}} \|\mathcal{J}^{\delta} \partial^4 y^{\delta}\|_{L^2}^{\f{9}{2}}  \leq C \bigg(\| y^{\delta}\|_{L^2}^6+ \|\mathcal{J}^{\delta} \partial^4 y^{\delta}\|_{L^2}^6\bigg).
\end{eqnarray*}
Therefore, we have the conclusion
\begin{eqnarray*}
	|J_6| & \leq&  \f{2}{100} \bigg\|\f{\mathcal{J}^{\delta} \partial_x^6 y^{\delta}}{1+ (\mathcal{J}^{\delta} y^{\delta}_x)^2} \bigg\|^2_{L^2}+ C \bigg(\| y^{\delta}\|_{L^2}^6+ \|\mathcal{J}^{\delta} \partial^4 y^{\delta}\|_{L^2}^6\bigg)+ G(t).
\end{eqnarray*}

We estimate $J_{7}$ as follows:
\begin{eqnarray}\nonumber
	&&|J_7|  = \left| 3 \al^2 (\al+ 3) \int  \f{(\mathcal{J}^{\delta} \partial_x^6 y^{\delta})}{1+ (\mathcal{J}^{\delta} y^{\delta}_x)^2} \cdot \partial_x^2 \left[ \f{  (\mathcal{J}^{\delta} y^{\delta}_{xx})^3}{(1+ (\mathcal{J}^{\delta} y^{\delta}_x)^2)^2} \right]\ dx \right|\\ \nonumber
	&& \leq C  \left| \int  \f{(\mathcal{J}^{\delta} \partial_x^6 y^{\delta})}{1+ (\mathcal{J}^{\delta} y^{\delta}_x)^2} \cdot  \left[ \f{  (\mathcal{J}^{\delta} y^{\delta}_{xx})^2 (\mathcal{J}^{\delta} \partial_x^4 y^{\delta}) }{(1+ (\mathcal{J}^{\delta} y^{\delta}_x)^2)^2} \right]\ dx \right|+ G(t)\\  \label{es3}
	&& \leq\f{1}{100} \left\|\f{\mathcal{J}^{\delta} \partial_x^6 y^{\delta}}{1+ (\mathcal{J}^{\delta} y^{\delta}_x)^2} \right\|^2_{L^2}+ C  \left\| \f{  (\mathcal{J}^{\delta} y^{\delta}_{xx})^2 (\mathcal{J}^{\delta} \partial_x^4 y^{\delta}) }{(1+ (\mathcal{J}^{\delta} y^{\delta}_x)^2)^2}\right\|^2_{L^2}+ G(t)\\ \nonumber
	& &\leq  \f{1}{100} \left\|\f{\mathcal{J}^{\delta} \partial_x^6 y^{\delta}}{1+ (\mathcal{J}^{\delta} y^{\delta}_x)^2} \right\|^2_{L^2}+ C  \left(\|   \mathcal{J}^{\delta} y^{\delta}_{xx}\|^2_{L^{\infty}} \|\mathcal{J}^{\delta} \partial_x^4 y^{\delta}\|_{L^2} \right)^2+ G(t)\\\nonumber
	&&\leq  \f{1}{100} \left\|\f{\mathcal{J}^{\delta} \partial_x^6 y^{\delta}}{1+ (\mathcal{J}^{\delta} y^{\delta}_x)^2} \right\|^2_{L^2}+ C  \left(\| y^{\delta} \|^{\f{3}{8}}_{L^2} \|  \partial_x^4 y^{\delta} \|^{\f{5}{8}}_{L^2} \right)^4 \|\mathcal{J}^{\delta} \partial_x^4 y^{\delta}\|^2_{L^2}+ G(t).
	\end{eqnarray}
Our conclusion for $J_{7}$ is then
\begin{eqnarray*}
|J_7| \leq   \f{1}{100} \left\|\f{\mathcal{J}^{\delta} \partial_x^6 y^{\delta}}{1+ (\mathcal{J}^{\delta} y^{\delta}_x)^2} \right\|^2_{L^2}+ C  \left(\| y^{\delta} \|^6_{L^2}+ \|\mathcal{J}^{\delta} \partial_x^4 y^{\delta}\|^6_{L^2}\right)+ G(t).
\end{eqnarray*}

For $J_{8},$ we begin with the following estimate:
\begin{eqnarray*}
|J_8| &=&10 \left| \al^2 (\al+ 3) \int  (\mathcal{J}^{\delta} \partial_x^6 y^{\delta}) \cdot \partial_x^2 \left[ \f{  (\mathcal{J}^{\delta} y^{\delta}_x) (\mathcal{J}^{\delta} y^{\delta}_{xx}) (\mathcal{J}^{\delta} y^{\delta}_{xxx})}{(1+ (\mathcal{J}^{\delta} y^{\delta}_x)^2)^3} \right]\ dx \right|\\
& \leq& C \left| \int  (\mathcal{J}^{\delta} \partial_x^6 y^{\delta}) \cdot \left[ \f{  (\mathcal{J}^{\delta} y^{\delta}_x) (\mathcal{J}^{\delta} y^{\delta}_{xx}) (\mathcal{J}^{\delta} \partial^5 y^{\delta})}{(1+ (\mathcal{J}^{\delta} y^{\delta}_x)^2)^3} \right]\ dx \right|+ G(t).
\end{eqnarray*}
The integral on the right-hand side is similar to $J_3$, and we handle it in the same way.

This brings us to the final term to estimate, $J_{9};$ for this, we have the following: 
\begin{eqnarray*}
|J_9| &=& \left|18 \al^2 (\al+ 3) \int  (\mathcal{J}^{\delta} \partial_x^6 y^{\delta}) \cdot \partial_x^2 \left[ \f{ ( \mathcal{J}^{\delta} y^{\delta}_x)^2 (\mathcal{J}^{\delta} y^{\delta}_{xx})^3 }{(1+ (\mathcal{J}^{\delta} y^{\delta}_x)^2)^4} \right] dx\right|\\
& \leq& C \left| \int  \f{(\mathcal{J}^{\delta} \partial_x^6 y^{\delta})}{1+ (\mathcal{J}^{\delta} y^{\delta}_x)^2} \cdot \left[ \f{  (\mathcal{J}^{\delta} y^{\delta}_x)^2 (\mathcal{J}^{\delta} y^{\delta}_{xx})^2 (\mathcal{J}^{\delta} \partial^4 y^{\delta})}{(1+ (\mathcal{J}^{\delta} y^{\delta}_x)^2)^3} \right] dx \right|+ G(t)\\
&\leq&\f{1}{100} \left\|\f{\mathcal{J}^{\delta} \partial_x^6 y^{\delta}}{1+ (\mathcal{J}^{\delta} y^{\delta}_x)^2} \right\|^2_{L^2}+ C  \left\|   (\mathcal{J}^{\delta} y^{\delta}_{xx})^2 (\mathcal{J}^{\delta} \partial_x^4 y^{\delta}) \right\|^2_{L^2}+ G(t).
\end{eqnarray*}
Here we have used the fact that $\left|\f{(\mathcal{J}^{\delta} y^{\delta}_x)^2}{(1+ (\mathcal{J}^{\delta} y^{\delta}_x)^2)^3}\right| \leq 1$. The last inequality is similar to \eqref{es3}, and we proceed in the same way to get
\begin{eqnarray*}
	|J_9| &\leq& \f{1}{100} \bigg\|\f{\mathcal{J}^{\delta} \partial_x^6 y^{\delta}}{1+ (\mathcal{J}^{\delta} y^{\delta}_x)^2} \bigg\|^2_{L^2}+ C  \bigg(\| y^{\delta} \|^6_{L^2}+ \|\mathcal{J}^{\delta} \partial_x^4 y^{\delta}\|^6_{L^2}\bigg)+ G(t).
\end{eqnarray*}

Putting the above together leads to the relation \eqref{est4}, which we had been aiming to prove. 
We now may add \eqref{en30} and \eqref{est4}, finding
\begin{eqnarray*}
\f{1}{2} \partial_t \left(\| y^{\delta}\|^2_{L^2}+ \|\partial_x^4 y^{\delta}\|^2_{L^2} \right)+  C \int    \f{(\mathcal{J}^{\delta} \partial_x^2 y^{\delta})^2+ (\mathcal{J}^{\delta} \partial_x^6 y^{\delta})^2}{(1+ (\mathcal{J}^{\delta} y^{\delta}_x)^2)^2} dx \leq C_0 \left(\| y^{\delta}\|^2_{L^2}+ \|\partial_x^4 y^{\delta}\|^2_{L^2}\right)+ C_1 \left(\| y^{\delta}\|^m_{L^2}+ \|\partial_x^4 y^{\delta}\|^m_{L^2}\right).
\end{eqnarray*}
This inequality  clearly implies a uniform bound (uniform with respect to $\delta$) for the function $y^{\delta}(x, t)$ in the space $H^4$ until a time $T= T(\al, \|y_0\|_{H^4})$. We will study the size of this time interval $[0, T]$ in a bit more
detail, and to this end we define $I(t)= \|y^{\delta}\|^2_{L^2}+ \|\partial_x^4 y^{\delta}\|^2_{L^2} $.  We  also 
fix  $C_0$  with $\ga= C_0$.

We may then say
\begin{eqnarray*}
	I(t) \leq e^{\ga t} I(0)+ \int_0^t e^{ \ga (t- s)} I^{\f{m}{2}}(s) ds.
\end{eqnarray*}
We use Lemma \ref{gr} to see that $I(t)$ remains bounded as long as $t \in [0, \be_{\f{m}{2}}],$ where 
\begin{eqnarray}
\be_{\f{m}{2} }= \sup \left\{t: \left(\f{m}{2}- 1\right) \int_0^t e^{- C\ga s} e^{\ga s} \bigg[\bigg(I(0) \bigg)^{(\f{m}{2}- 1)} e^{(\f{m}{2}- 1) \ga s}\bigg] ds < 1 \right\}.
\end{eqnarray}
A simple calculation then shows that we have guaranteed existence of our solutions over the interval
\begin{eqnarray}\label{tt00}
0 < t< \f{\ln\bigg(1+ \f{\ga}{\|y_0\|^{m- 2}_{H^4}}\bigg)}{ \ga}.
\end{eqnarray}
Clearly, this bound for the time of existence depends on the initial values; that is if the  value $I(0)= \|y_0\|^2_{L^2}+ \|\partial_x^4 y_0\|^2_{L^2}$ stays small, the time interval is large.  Note that we will take this $m> 2$ and $\ga$ to be fixed throughout the sequel. 
\end{proof} 
Lemma \ref{lem00} provides a uniform bound in $H^4$ for the solutions of the approximate equations \eqref{yt0}. Although this is a good and useful estimate, as we are aiming to show the existence of classical solutions, we need
a little more. In what follows, we will pass to the limit of solutions of the approximate equations \eqref{yt0} to find
solutions of the original equation \eqref{yt}. In order to do this, we need to have at least the continuity of the function $F(y^{\de})$, where $F(y^{\de})$ denotes
\begin{eqnarray}\label{FF} 
y^{\de}_t= F(y^{\de}),
\end{eqnarray}
with $y^{\de}$ determined from \eqref{yt0}.
To guarantee continuity of this function, one approach is to prove an  $H^5$ uniform  bound. This clearly means $\partial^4_x y$ is continuous and hence the function $F(y^{\de})$ is continuous as well. The following lemma concerns the appropriate bound. 
\begin{lem}\label{lem01}
	Let $y^{\delta}$ be the solution of  \eqref{yt}. Then there exists $T= T(\al)$ and $C= C(y_0, \al)$, independent of $\delta$, so that for any  $0 < t< \f{\ln\bigg(1+ \f{\ga}{\|y_0\|^{m- 2}_{H^4}}\bigg)}{ \ga}$, 
	\begin{eqnarray}
	\sup_{0< t< T} \|y^{\delta}\|^2_{H^5}+   \int  \f{ (\partial_x^7 \mathcal{J}^{\delta} y^{\delta})^2}{(1+ (\mathcal{J}^{\delta} y^{\delta}_x)^2)^2}   dx \leq C.
	\end{eqnarray}
\end{lem}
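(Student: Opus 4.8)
The plan is to mirror the energy argument of Lemma \ref{lem00}, now differentiating \eqref{yt0} five times in space and pairing with $\partial_x^5 y^{\delta}$. The crucial new feature is that the uniform $H^4$ bound of Lemma \ref{lem00} is already in hand on the interval \eqref{tt00}; this lets me treat all coefficients and all factors of order at most four as controlled quantities, so that the forthcoming Gronwall inequality is essentially \emph{linear} in the top-order energy rather than superlinear. This is the structural reason the $H^5$ estimate is available on the very same time interval.

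First I would apply $\partial_x^5$ to \eqref{yt0} and take the $L^2$ inner product with $\partial_x^5 y^{\delta}$, producing $\frac{1}{2}\partial_t \|\partial_x^5 y^{\delta}\|^2_{L^2}$ together with one term from each line of \eqref{yt0}. Exactly as in Lemma \ref{lem00}, the dissipative term is extracted from the fourth-order contribution: substituting \eqref{twoDerivativesKappa} and keeping only the leading piece $\alpha^2(\alpha+3)\,\mathcal{J}^{\delta} y^{\delta}_{xxxx}/(1+(\mathcal{J}^{\delta} y^{\delta}_x)^2)^2$, then integrating by parts twice and using that $\mathcal{J}^{\delta}$ is self-adjoint and commutes with $\partial_x$, yields the coercive term $\alpha^2(\alpha+3)\int (\mathcal{J}^{\delta}\partial_x^7 y^{\delta})^2/(1+(\mathcal{J}^{\delta} y^{\delta}_x)^2)^2\,dx$ on the left-hand side. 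All remaining contributions are moved to the right; I would group them into terms analogous to $J_1,\dots,J_9$ and, as before, retain only the most singular piece of each, sweeping the rest into a harmless collection $G(t)$.

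Then I would estimate each top-order term by Young's inequality, splitting off a factor $\mathcal{J}^{\delta}\partial_x^7 y^{\delta}/(1+(\mathcal{J}^{\delta} y^{\delta}_x)^2)$ that is absorbed into the dissipation with a small constant (such as $\frac{1}{100}$), while the complementary factor --- now of order at most six --- is controlled by Gagliardo--Nirenberg interpolation between $\partial_x^4 y^{\delta}$ (bounded in $L^2$ by Lemma \ref{lem00}) and the dissipation at order seven, the resulting exponent on the dissipation factor being strictly below two and hence itself absorbable by a further Young step. Every coefficient and every low-order factor ($\mathcal{J}^{\delta} y^{\delta}_x$, $\mathcal{J}^{\delta} y^{\delta}_{xx}$, and so on) lies in $L^\infty$ by Sobolev embedding applied to the a priori $H^4$ bound, so these contribute only constants $C=C(y_0,\alpha)$. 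Collecting, I would arrive at
\begin{eqnarray*}
\frac{1}{2}\partial_t \|\partial_x^5 y^{\delta}\|^2_{L^2} + c\int \frac{(\mathcal{J}^{\delta}\partial_x^7 y^{\delta})^2}{(1+(\mathcal{J}^{\delta} y^{\delta}_x)^2)^2}\,dx \leq C\left(1+ \|\partial_x^5 y^{\delta}\|^2_{L^2}\right),
\end{eqnarray*}
with $C$ depending only on $\|y_0\|_{H^4}$ and $\alpha$ through Lemma \ref{lem00}. The ordinary Gronwall inequality then closes the estimate on the interval \eqref{tt00}, and integrating the coercive term in time gives the claimed weighted bound on $\partial_x^7 \mathcal{J}^{\delta} y^{\delta}$.

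The main obstacle is that the genuinely top-order terms carry seven derivatives, two more than the order of the energy I am propagating and three more than the regularity I may assume bounded. These terms have no meaning on their own and can be tamed only by the dissipation: the whole argument hinges on checking that, after distributing derivatives by the product rule and isolating the single factor bearing $\partial_x^7 y^{\delta}$, the remaining factors can always be placed in $L^\infty$ or $L^2$ using the $H^4$ control and interpolation, so that Young's inequality returns a multiple of the dissipation strictly below its coefficient $c$. A secondary point requiring care is the bookkeeping of the mollifiers; but since $\mathcal{J}^{\delta}$ commutes with $\partial_x$, is self-adjoint, and satisfies $\|\mathcal{J}^{\delta} f\|_{H^s}\le \|f\|_{H^s}$ by \eqref{m0}, each such manipulation is harmless and introduces no $\delta$-dependent constant, which is precisely what guarantees the final bound is uniform in $\delta$.
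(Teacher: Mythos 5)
Your proposal is correct and follows essentially the same route as the paper: differentiate \eqref{yt0} five times, pair with $\partial_x^5 y^{\delta}$, extract the coercive seventh-order term from the curvature contribution, and use the $H^4$ bound of Lemma \ref{lem00} (valid on the same interval \eqref{tt00}) to place all low-order factors in $L^\infty$, absorbing the genuinely top-order pieces into the dissipation via interpolation and Young's inequality. The only cosmetic difference is that the paper's final inequality has a pure constant on the right-hand side (so integration in time suffices), whereas you retain a linear term $C\|\partial_x^5 y^{\delta}\|^2_{L^2}$ and close with Gronwall; both yield the same uniform-in-$\delta$ conclusion.
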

\begin{proof}
	We take five spatial derivatives of equation \eqref{yt0}. Then its inner product with the function $\partial_x^5 y^{\de}$ leads to the following identity:
	\begin{eqnarray}\label{en01}
	&& \f{1}{2} \partial_t \|\partial^5_x y^{\delta}\|^2_{L^2}+ (\al- 1) \int ( \mathcal{J}^{\delta} \partial^5_x y^{\delta}) \cdot  \partial^5_x \bigg[\f{ (\mathcal{J}^{\delta } y^{\delta}_{xx})}{1+ (\mathcal{J}^{\delta} y^{\delta}_x)^2}\bigg] dx+\\ \nonumber
	&+& \al^2(\al+ 3) \int  (\mathcal{J}^{\delta} \partial^5_x y^{\delta}) \cdot   \partial^5_x \bigg[\f{1}{\sqrt{1+ (\mathcal{J}^{\delta} y^{\delta}_x)^2}} \cdot\f{d^2 \kappa^{\delta}}{d x^2}\bigg] dx+ \int (\mathcal{J}^{\delta} \partial^5_x y^{\delta}) \cdot  \partial^5_x  \bigg[\sqrt{1+ (\mathcal{J}^{\delta} \mathcal{J}^{\delta} y^{\delta}_x)^2}\bigg] dx+\\ \nonumber
		&&+ \bigg(1+ \f{1}{2} \al^2\bigg)  \int (\mathcal{J}^{\delta} \partial_x^5 y^{\delta}) \cdot \partial_x^5 \bigg[\f{(\mathcal{J} y^{\delta}_{xx})^2}{(1+ (\mathcal{J}^{\delta} y^{\delta}_x)^2)^{\f{5}{2}}} \bigg] dx+\\ \nonumber
		&&+  \bigg(2 \al+ 5 \al^2- \f{1}{3} \al^3\bigg)  \int (\mathcal{J}^{\delta} \partial_x^5 y^{\delta}) \cdot \partial_x^5 \bigg[\f{(\mathcal{J} y^{\delta}_{xx})^3}{(1+ (\mathcal{J}^{\delta} y^{\delta}_x)^2)^4} \bigg] dx\\ \nonumber
	&=& \al^2(\al+ 3) \int (\mathcal{J}^{\delta} \partial^5_x y^{\delta}) \cdot  \partial^5_x \bigg[(\mathcal{J}^{\delta} \mathcal{J}^{\delta} y^{\delta}_x) \cdot \kappa^{\delta} \cdot \f{d \kappa^{\delta}}{d x}\bigg] dx.
	\end{eqnarray}
	For a more convenient form of this identity, we simplify some of these terms.  To begin, we have
	\begin{eqnarray*}
		&&\al^2(\al+ 3) \int  (\mathcal{J}^{\delta} \partial^5_x y^{\delta}) \cdot  \partial^5_x \bigg[\f{1}{\sqrt{1+ (\mathcal{J}^{\delta} y^{\delta}_x)^2}} \cdot\f{d^2 \kappa^{\delta}}{d x^2}\bigg] dx= \al^2(\al+ 3) \int  (\mathcal{J}^{\delta} \partial^5_x y^{\delta}) \cdot  \partial^5_x \bigg[	 \f{\mathcal{J}^{\delta} y^{\delta}_{xxxx}}{(1+ (\mathcal{J}^{\delta} y^{\delta}_x)^2)^2}\bigg] dx\\
		&&-  3 \al^2(\al+ 3) \int  (\mathcal{J}^{\delta} \partial^5_x y^{\delta}) \cdot  \partial^5_x \bigg[\f{  (\mathcal{J}^{\delta} y^{\delta}_{xx})^3}{(1+ (\mathcal{J}^{\delta} y^{\delta}_x)^2)^3} \bigg] dx+ 9 \al^2(\al+ 3) \int  (\mathcal{J}^{\delta} \partial^5_x y^{\delta}) \cdot  \partial^5_x \bigg[\f{   (\mathcal{J}^{\delta} y^{\delta}_x) (\mathcal{J}^{\delta} y^{\delta}_{xx}) (\mathcal{J}^{\delta} y^{\delta}_{xxx})}{(1+ (\mathcal{J}^{\delta} y^{\delta}_x)^2)^3} \bigg] dx\\
		&&+ 15 \al^2(\al+ 3) \int  (\mathcal{J}^{\delta} \partial^5_x y^{\delta}) \cdot  \partial^5_x \bigg[\f{ ( \mathcal{J}^{\delta} y^{\delta}_x)^2 (\mathcal{J}^{\delta} y^{\delta}_{xx})^3 }{(1+ (\mathcal{J}^{\delta} y^{\delta}_x)^2)^4}\bigg] dx.
	\end{eqnarray*}
	The first term in the right hand side of this relation is
	\begin{eqnarray*}
		&&\al^2(\al+ 3) \int  (\mathcal{J}^{\delta} \partial^5_x y^{\delta}) \cdot  \partial^5_x \bigg[	 \f{\mathcal{J}^{\delta} y^{\delta}_{xxxx}}{(1+ (\mathcal{J}^{\delta} y^{\delta}_x)^2)^2}\bigg] dx= \al^2(\al+ 3) \int   	 \f{(\mathcal{J}^{\delta} \partial^7_x y^{\delta})^2}{(1+ (\mathcal{J}^{\delta} y^{\delta}_x)^2)^2} dx-\\
		&&- 4 \al^2(\al+ 3) \int ( \mathcal{J}^{\delta} \partial_x^7 y^\de) \cdot  \f{( \mathcal{J}^{\delta} \partial_x^6 y^\de) ( \mathcal{J}^{\delta} y^\de_{x}) (y^\de_{xx})}{(1+ (\mathcal{J}^{\delta} y^{\delta}_x)^2)^3} dx\\
		&&- 4 \al^2(\al+ 3) \int ( \mathcal{J}^{\delta} \partial_x^7 y^\de) \cdot  \partial_x^2 \bigg[\f{( \mathcal{J}^{\delta} \partial_x^4 y^\de) ( \mathcal{J}^{\delta} y^\de_{x}) (y^\de_{xx})}{(1+ (\mathcal{J}^{\delta} y^{\delta}_x)^2)^3} \bigg] dx.
	\end{eqnarray*}
	We next rewrite another term appearing in \eqref{en01}:
	\begin{eqnarray*}
		\int (\mathcal{J}^{\delta} \partial^5_x y^{\delta}) \cdot  \partial^5_x  \bigg[\sqrt{1+ (\mathcal{J}^{\delta} \mathcal{J}^{\delta} y^{\delta}_x)^2}\bigg] dx= \int (\mathcal{J}^{\delta} \partial^7_x y^{\delta}) \cdot  \partial^3_x  \bigg[\f{1+ (\mathcal{J}^{\delta} \mathcal{J}^{\delta} y^{\delta}_x)^2}{\sqrt{1+ (\mathcal{J}^{\delta} \mathcal{J}^{\delta} y^{\delta}_x)^2} }\bigg] dx.
	\end{eqnarray*}
	Finally, another term in \eqref{en01} can be written as follows:
	\begin{eqnarray*}
		\al^2(\al+ 3) \int (\mathcal{J}^{\delta} \partial^5_x y^{\delta}) \cdot  \partial^5_x \bigg[(\mathcal{J}^{\delta} \mathcal{J}^{\delta} y^{\delta}_x) \cdot \kappa^{\delta} \cdot \f{d \kappa^{\delta}}{d x}\bigg] dx&=&  4 \int (\mathcal{J}^{\delta} \partial^7_x y^{\delta}) \cdot  \partial^3_x \bigg[  \f{  (\mathcal{J}^{\delta} \mathcal{J}^{\delta} y^{\delta}_x) (\mathcal{J}^{\delta} \mathcal{J}^{\delta} y^{\delta}_{xx}) (\mathcal{J}^{\delta} y^{\delta}_{xxx}) }{(1+ (\mathcal{J}^{\delta} y^{\delta}_x)^2)^3}\bigg] dx\\
		&-& 4 \al^2(\al+ 3) \int (\mathcal{J}^{\delta} \partial^7_x y^{\delta}) \cdot  \partial^3_x \bigg[    \f{ (\mathcal{J}^{\delta} y^{\delta}_x)^2 \cdot (\mathcal{J}^{\delta} y^{\delta}_{xx})^3}{(1+ (\mathcal{J}^{\delta} y^{\delta}_x)^2)^4}\bigg] dx.
	\end{eqnarray*}
	One can put in more effort and simplify other terms for a more convenient form, 
	but we avoid  long calculations and work with the following simplified version, as it is enough for our purpose:
	\begin{eqnarray}\label{en02}
	&&\hspace{7em} \f{1}{2} \partial_t \|\partial^5_x y^{\delta}\|^2_{L^2}+ \al^2(\al+ 3) \int   	 \f{(\mathcal{J}^{\delta} \partial^7_x y^{\delta})^2}{(1+ (\mathcal{J}^{\delta} y^{\delta}_x)^2)^2} dx \\ \nonumber
	&& \leq  |\al- 1| \cdot \bigg| \int ( \mathcal{J}^{\delta} \partial^7_x y^{\delta}) \cdot  \partial^3_x \bigg[\f{ (\mathcal{J}^{\delta} y^{\delta}_{xx})}{1+ (\mathcal{J}^{\delta} y^{\delta}_x)^2}\bigg] dx \bigg|+ 4 \al^2(\al+ 3) \bigg|\int ( \mathcal{J}^{\delta} \partial_x^7 y^\de) \cdot  \f{( \mathcal{J}^{\delta} \partial_x^6 y^\de) ( \mathcal{J}^{\delta} y^\de_{x}) (y^\de_{xx})}{(1+ (\mathcal{J}^{\delta} y^{\delta}_x)^2)^3} dx\bigg| \\ \nonumber
	&&+ 4 \al^2(\al+ 3) \bigg| \int ( \mathcal{J}^{\delta} \partial_x^7 y^\de) \cdot  \partial_x^2 \bigg[\f{( \mathcal{J}^{\delta} \partial_x^4 y^\de) ( \mathcal{J}^{\delta} y^\de_{x}) (y^\de_{xx})}{(1+ (\mathcal{J}^{\delta} y^{\delta}_x)^2)^3} \bigg] dx\bigg| +
	3 \al^2(\al+ 3) \bigg|\int  (\mathcal{J}^{\delta} \partial^7_x y^{\delta}) \cdot  \partial^3_x \bigg[\f{  (\mathcal{J}^{\delta} y^{\delta}_{xx})^3}{(1+ (\mathcal{J}^{\delta} y^{\delta}_x)^2)^3} \bigg] dx\bigg| \\ \nonumber
	&& + 10 \al^2(\al+ 3) \bigg| \int  (\mathcal{J}^{\delta} \partial^7_x y^{\delta}) \cdot  \partial^3_x \bigg[\f{   (\mathcal{J}^{\delta} y^{\delta}_x) (\mathcal{J}^{\delta} y^{\delta}_{xx}) (\mathcal{J}^{\delta} y^{\delta}_{xxx})}{(1+ (\mathcal{J}^{\delta} y^{\delta}_x)^2)^3} \bigg] dx\bigg|\\ \nonumber
	&&+ 18 \al^2(\al+ 3) \bigg|\int  (\mathcal{J}^{\delta} \partial^7_x y^{\delta}) \cdot  \partial^3_x \bigg[\f{ ( \mathcal{J}^{\delta} y^{\delta}_x)^2 (\mathcal{J}^{\delta} y^{\delta}_{xx})^3 }{(1+ (\mathcal{J}^{\delta} y^{\delta}_x)^2)^4}\bigg] dx\bigg|+  \bigg| \int (\mathcal{J}^{\delta} \partial^7_x y^{\delta}) \cdot  \partial^3_x  \bigg[\f{1+ (\mathcal{J}^{\delta}  y^{\delta}_x)^2}{\sqrt{1+ (\mathcal{J}^{\delta}  y^{\delta}_x)^2} }\bigg] dx \bigg|\\ \nonumber
	&&+ \bigg(1+ \f{1}{2} \al^2\bigg) \bigg| \int (\mathcal{J}^{\delta} \partial_x^7 y^{\delta}) \cdot \partial_x^2 \bigg[\f{(\mathcal{J} y^{\delta}_{xx})^2}{(1+ (\mathcal{J}^{\delta} y^{\delta}_x)^2)^{\f{5}{2}}} \bigg] dx \bigg|+  \bigg|\bigg(2 \al+ 5 \al^2- \f{1}{3} \al^3\bigg)  \int (\mathcal{J}^{\delta} \partial_x^7 y^{\delta}) \cdot \partial_x^2 \bigg[\f{(\mathcal{J} y^{\delta}_{xx})^3}{(1+ (\mathcal{J}^{\delta} y^{\delta}_x)^2)^4} \bigg] dx\bigg|\\ \nonumber
	&&  := I_1+ \cdots+ I_7+ I_8+ I_9.
	\end{eqnarray}
	We will omit most details of the estimates of these terms, as the proof is similar in many respects to the previous lemma.  For those details that we do show, in order to control $I_1, \cdots I_9$ we will focus on the worst term in each of them. In fact, the worse terms are the ones that the derivative behind the fractions hit the highest degree in the numerator. Note that as long as we restrict the time interval to the interval in \eqref{tt00}, Lemma \ref{lem00} already provides us with $H^4$ bounds, and hence, for $a= 0, 1, 2, 3$, there is a constant $C$ so that, 
	\begin{eqnarray}\label{es11}
	\|\partial_x^a y^{\de}\|_{L^{\infty}} \leq C.
	\end{eqnarray} 
	Therefore, any term of this kind which comes up in the estimates is easily bounded by a constant $C$. Moreover, an application of the Gagliardo-Nirenberg inequality and Lemma \ref{lem00} leads to
	\begin{eqnarray}\label{es110}
	\|\partial_x^4 y^{\de}\|_{L^{\infty}} \leq \eps_0 \|\partial_x^7 y^{\de}\|_{L^2}+ C \| y^{\de}\|_{L^2}
	\end{eqnarray}	
	 where in our future calculations, the constant $\eps_0$ will be chosen in a way that the seventh derivatives  on the right hand side could be absorbed in the left hand side 
	 (as is frequently done in energy estimates for parabolic equations).
	 This incurs the expense of a potentially large constant $C> 0$ on the term $\| y^{\de}\|_{L^2}$.

	We will start with the term $I_{1},$ and as mentioned above we only present the bound for the worst term in the expansion of $I_{1}:$ 
	\begin{eqnarray*}
		I_1 &=&  |\al- 1| \cdot \bigg| \int \f{( \mathcal{J}^{\delta} \partial^7_x y^{\delta})}{1+ (\mathcal{J}^{\delta} y^{\delta}_x)^2 }  \cdot \bigg(1+ (\mathcal{J}^{\delta} y^{\delta}_x)^2 \bigg) \cdot  \partial^3_x \bigg[\f{ (\mathcal{J}^{\delta} y^{\delta}_{xx})}{1+ (\mathcal{J}^{\delta} y^{\delta}_x)^2}\bigg] dx \bigg| \\
		&\leq&   C \bigg\| \f{( \mathcal{J}^{\delta} \partial^7_x y^{\delta})}{1+ (\mathcal{J}^{\delta} y^{\delta}_x)^2}  \bigg\|_{L^2} \bigg\|1+ (\mathcal{J}^{\delta} y^{\delta}_x)^2 \bigg\|_{L^{\infty}} \bigg\| \partial^3_x \bigg[\f{ (\mathcal{J}^{\delta} y^{\delta}_{xx})}{1+ (\mathcal{J}^{\delta} y^{\delta}_x)^2}\bigg] \bigg\|_{L^2} \leq C \bigg\| \f{( \mathcal{J}^{\delta} \partial^7_x y^{\delta})}{1+ (\mathcal{J}^{\delta} y^{\delta}_x)^2}   \bigg\|_{L^2}  \cdot  \bigg\|\partial^3_x \bigg[\f{ (\mathcal{J}^{\delta} y^{\delta}_{xx})}{1+ (\mathcal{J}^{\delta} y^{\delta}_x)^2} \bigg] \bigg\|_{L^2}\\
		&\leq& C \bigg\| \f{( \mathcal{J}^{\delta} \partial^7_x y^{\delta})}{1+ (\mathcal{J}^{\delta} y^{\delta}_x)^2}   \bigg\|_{L^2}  \cdot  \bigg\| \f{ (\mathcal{J}^{\delta} \partial^5_x y^{\delta})}{1+ (\mathcal{J}^{\delta} y^{\delta}_x)^2} \bigg\|_{L^2}+ G \leq C \bigg\| \f{( \mathcal{J}^{\delta} \partial^7_x y^{\delta})}{1+ (\mathcal{J}^{\delta} y^{\delta}_x)^2}   \bigg\|_{L^2}  \cdot  \bigg(\bigg\| \mathcal{J}^{\delta} \partial^5_x y^{\delta}\bigg\|_{L^2}+ G\bigg)\\
		&\leq& C \bigg\| \f{( \mathcal{J}^{\delta} \partial^7_x y^{\delta})}{1+ (\mathcal{J}^{\delta} y^{\delta}_x)^2}   \bigg\|_{L^2}  \cdot  \bigg(\bigg\| \mathcal{J}^{\delta} \partial^4_x y^{\delta}\bigg\|^{\f{2}{3}}_{L^2} \bigg\| \mathcal{J}^{\delta} \partial^7_x y^{\delta}\bigg\|^{\f{1}{3}}_{L^2}\bigg)+ G \leq  \bigg\| \f{( \mathcal{J}^{\delta} \partial^7_x y^{\delta})}{1+ (\mathcal{J}^{\delta} y^{\delta}_x)^2}\bigg\|^{\f{4}{3}}_{L^2} \bigg\|1+ (\mathcal{J}^{\delta} y^{\delta}_x)^2 \bigg\|^{\f{1}{3}}_{L^{\infty}}\\
		&\leq& \f{1}{10} \bigg\| \f{( \mathcal{J}^{\delta} \partial^7_x y^{\delta})}{1+ (\mathcal{J}^{\delta} y^{\delta}_x)^2}\bigg\|^2_{L^2}+ C. 
	\end{eqnarray*}
	
	We omit the details for $I_{2},$ but reach the same conclusion as for $I_{1}.$

The estimate for $I_{3}$ has an interesting feature which we mention.  To begin, we have
	\begin{eqnarray*}
		I_3 &\leq&  |\al^2(\al+ 3)| \bigg| \int ( \mathcal{J}^{\delta} \partial_x^7 y^\de) \cdot  \partial_x^2 \bigg[\f{( \mathcal{J}^{\delta} \partial_x^4 y^\de) ( \mathcal{J}^{\delta} y^\de_{x}) (y^\de_{xx})}{(1+ (\mathcal{J}^{\delta} y^{\delta}_x)^2)^3} \bigg] dx\bigg|\\
		& \leq& C \bigg| \int ( \mathcal{J}^{\delta} \partial_x^7 y^\de) \cdot  \bigg[\f{( \mathcal{J}^{\delta} \partial_x^6 y^\de) ( \mathcal{J}^{\delta} y^\de_{x}) (y^\de_{xx})}{(1+ (\mathcal{J}^{\delta} y^{\delta}_x)^2)^3} \bigg] dx\bigg|+ G.
	\end{eqnarray*}
	The integral on the right hand side of this may be controlled as desired. 
	The interesting feature mentioned above has to do with an estimate of a lower-order term from the collection $G,$ namely
	\begin{eqnarray*}
	\bigg| \int ( \mathcal{J}^{\delta} \partial_x^7 y^\de) \cdot  \bigg[\f{( \mathcal{J}^{\delta} \partial_x^4 y^\de)^2 ( \mathcal{J}^{\delta} y^\de_{x})}{(1+ (\mathcal{J}^{\delta} y^{\delta}_x)^2)^3} \bigg] dx\bigg| &\leq& \|\mathcal{J}^{\delta} \partial_x^7 y^\de\|_{L^2} \|\mathcal{J}^{\delta} \partial_x^4 y^\de\|_{L^{2}}  \|\mathcal{J}^{\delta} \partial_x^4 y^\de\|_{L^{\infty}} \\
	&\leq& \f{1}{10} \left\|\frac{\mathcal{J}^{\delta} \partial_x^7 y^\de}{1+(\mathcal{J}^{\delta}y^{\delta}_{x})^{2}}\right\|^2_{L^2}+ C\|\mathcal{J}^{\delta} y^\de\|^2_{L^{2}}   \\
	&\leq& \f{1}{10} \left\|\frac{\mathcal{J}^{\delta} \partial_x^7 y^\de}{1+(\mathcal{J}^{\delta}y^{\delta}_{x})^{2}}\right\|^2_{L^2}+ C
	\end{eqnarray*}
in which we have used \eqref{es110} and Lemma \ref{lem00}.

	Omitting further details, we have the conclusion
	\begin{eqnarray*}
		&& \f{1}{2} \partial_t \|\partial^5_x y^{\delta}\|^2_{L^2}+ C_0 \int   	 \f{(\mathcal{J}^{\delta} \partial^7_x y^{\delta})^2}{(1+ (\mathcal{J}^{\delta} y^{\delta}_x)^2)^2} dx \leq C.
	\end{eqnarray*}
	where $C_0$ is a positive constant and satisfies $C_0 \leq 4- \f{9}{10}$. This  clearly finishes the proof.
\end{proof}

Now we are ready to present the existence of the solution to the initial value problem for 
 \eqref{yt} in the Sobolev space $H^5$. 
\label{sec:4}
\begin{lem}\label{lem0}
	For all $0 < t< \f{1}{\ga}\ln\bigg(1+ \f{\ga}{\|y_0\|^{m- 2}_{H^4}}\bigg)$ there exists a function $y \in H^5$ that solves the equation \eqref{yt}, with initial data $y(\cdot,0)=y_{0}\in H^{5}.$ Moreover, there is a constant $C= C(y_0, \al)$ so that 
	\begin{eqnarray}
	\sup_{0< t< T} \|y\|_{H^5} \leq C.
	\end{eqnarray}
\end{lem}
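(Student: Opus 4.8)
The plan is to produce $y$ as a limit of the approximate solutions $y^\delta$ of \eqref{yt0} as $\delta\to 0$, using the $\delta$-uniform bounds already established. First I would combine Lemmas \ref{lem00} and \ref{lem01}: on the common interval $0<t<T$, with $T=\frac{1}{\gamma}\ln\!\big(1+\frac{\gamma}{\|y_0\|_{H^4}^{m-2}}\big)$, they give a bound
\begin{equation*}
\sup_{0<t<T}\|y^\delta\|_{H^5}\le C,
\end{equation*}
with $C=C(y_0,\alpha)$ independent of $\delta$. Next I would read off a uniform bound on the time derivative. The highest-order contribution to the right-hand side of \eqref{yt0} is the fourth-order piece coming from $d^2\kappa^\delta/dx^2$, so the map $y^\delta\mapsto F(y^\delta)$ of \eqref{FF} loses at most four derivatives, while every remaining factor is a smooth bounded function of lower-order derivatives (controlled in $L^\infty$ by the $H^4$ bound through Sobolev embedding). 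Hence $\sup_{0<t<T}\|\partial_t y^\delta\|_{H^1}\le C$ uniformly in $\delta$, the outer mollifiers causing no loss thanks to \eqref{m0}.

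With $\{y^\delta\}$ bounded in $L^\infty([0,T];H^5)$ and $\{\partial_t y^\delta\}$ bounded in $L^\infty([0,T];H^1)$, I would invoke the Aubin--Lions--Simon compactness lemma through the chain $H^5\hookrightarrow\hookrightarrow H^s\hookrightarrow H^1$ (compact for $s<5$): a subsequence, not relabeled, converges strongly in $C([0,T];H^s)$ for every $s<5$ and weak-$*$ in $L^\infty([0,T];H^5)$ to a limit $y$. Strong convergence in $C([0,T];H^4)$ already suffices to pass to the limit in each term of \eqref{yt0}: the leading term is linear in $y^\delta_{xxxx}$, so $L^2$-convergence of the fourth derivative is enough, while the rational coefficients and the products of $\mathcal{J}^\delta y^\delta_x,\mathcal{J}^\delta y^\delta_{xx},\mathcal{J}^\delta y^\delta_{xxx}$ converge uniformly since $H^4\hookrightarrow C^3$. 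To strip the outer mollifiers I would estimate $\|\mathcal{J}^\delta g^\delta-g\|_{L^2}\le\|g^\delta-g\|_{L^2}+\|\mathcal{J}^\delta g-g\|_{L^2}$ and use \eqref{m0} together with $\mathcal{J}^\delta\to I$ strongly on $L^2$. The initial condition passes to the limit because $y^\delta(\cdot,0)=\mathcal{J}^\delta y_0\to y_0$ in $H^5$ (as $y_0\in H^5$), so $y(\cdot,0)=y_0$; and $\sup_{0<t<T}\|y\|_{H^5}\le C$ follows from weak-$*$ lower semicontinuity of the $H^5$ norm. This already yields a function $y$ solving \eqref{yt} with the stated bound.

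The remaining, and I expect main, obstacle is upgrading the regularity of the limit from $C([0,T];H^s)$, $s<5$, to the full $C([0,T];H^5)$ required by Theorem \ref{thm1}. The soft half is weak continuity: $y\in L^\infty([0,T];H^5)\cap C([0,T];H^4)$ forces $y\in C_w([0,T];H^5)$. To promote this to strong continuity I would show that $t\mapsto\|y(t)\|_{H^5}^2$ is continuous, which together with weak continuity gives strong continuity. This is exactly why Lemma \ref{lem01} was arranged with the dissipative term $\alpha^2(\alpha+3)\int (\mathcal{J}^\delta\partial_x^7 y^\delta)^2/(1+(\mathcal{J}^\delta y^\delta_x)^2)^2\,dx$ kept on the left: running that estimate on the limit equation (now free of mollifiers) produces an energy identity whose right-hand side is integrable in time, so $\|y(t)\|_{H^5}^2$ is absolutely continuous and hence cannot jump. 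Controlling the passage to the limit in this energy identity — rather than merely in the equation — is the delicate point; alternatively a Bona--Smith regularization of the limit equation gives the same continuity (and, as a bonus, uniqueness and continuous dependence), though only existence and the stated bound are needed here.
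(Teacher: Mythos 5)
Your proposal is correct, and its skeleton --- $\delta$-uniform bounds from Lemmas \ref{lem00} and \ref{lem01}, extraction of a convergent subsequence, and passage to the limit in \eqref{yt0} --- is the same as the paper's. Where you genuinely diverge is in the compactness mechanism, and your version is the more rigorous one. The paper invokes Banach--Alaoglu, asserts that a subsequence satisfies $y^{\delta_j}\to y$ ``in $H^5$,'' and then passes to the limit inside the integral representation $y^{\delta}=\mathcal{J}^{\delta}y_0+\int_0^t F(y^{\delta})\,ds$; as written this is loose, because Banach--Alaoglu yields only weak-$*$ convergence, which by itself does not justify taking limits in the nonlinear, derivative-laden terms of $F$. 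Your route supplies exactly the missing ingredient: the uniform bound on $\partial_t y^{\delta}$ in $H^1$ (legitimate, since $F$ loses four derivatives from $H^5$, $H^1$ is an algebra in one dimension, and the outer mollifiers cost nothing by \eqref{m0}) feeds the Aubin--Lions--Simon lemma, producing strong convergence in $C([0,T];H^s)$ for $s<5$; with $H^4\hookrightarrow C^3$ the term-by-term limit, including the stripping of the mollifiers via your triangle-inequality estimate, becomes routine, and weak-$*$ lower semicontinuity gives the $H^5$ bound on the limit. What the paper's shortcut buys is brevity; what yours buys is an actual justification of the convergence that the paper only asserts. Finally, your closing paragraph on upgrading to $C([0,T];H^5)$ goes beyond what Lemma \ref{lem0} itself claims (a bounded $H^5$-valued solution), but it addresses a real gap between this lemma and Theorem \ref{thm1}, whose statement requires $y\in C([0,T],H^5)$ and whose proof, resting on this lemma, never establishes strong continuity in time; your weak-continuity-plus-norm-continuity argument (or the Bona--Smith alternative) is the standard way to close that gap.
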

\begin{proof} 
	In Lemma \ref{lem00} we have shown that $\{y^{\de}\}_{\de> 0}$ is a uniformly bounded  and  continuous  family of  functions defined  on $\mathbb{R} \times [0, T]$ in the Sobolev space $H^5$. Hence, an application of the Banach-Aloaglu provides a subset $\{y^{\de_j}\}_j$ and a function $y \in H^5$ so that $y^{\de_j} \to y$ in $H^5$. We claim that this $y \in H^5$ solves the equation \eqref{yt}. Indeed, for all $\de> 0$, the integral representation of the solution to the equation \eqref{yt0} is in hand,
	\begin{eqnarray}\label{y0}
	y^{\de}(x, t)= y_0^{\de}(x)+ \int_0^t F(y^{\de})(x,s) ds.
	\end{eqnarray}
	where $F(\cdot)$ is defined in  \eqref{FF}.  The integrand in the right hand side consists of continuous terms with functions $\partial_x^s y^{\de}$, $0 \leq s \leq 4$ within. Therefore in \eqref{y0}, there is no difficulty in passing to limit on the subsequence $\de_j$. Since $y^{\de_j} \to y$ as $j \to \infty$, that means
	\begin{eqnarray}\label{y1}
	y(x, t)= y_0(x)+ \int_0^t F(y)(s) ds.
	\end{eqnarray}
	This immediately implies that \eqref{y1} satisfies the equation \eqref{yt}. 
\end{proof}


\section{Asymptotics}
\label{sec:5}
In this section we show that, in a special scaling limit, solutions of the system \eqref{yt} and solutions of the KS
equation \eqref{UU} shadow one another over a time period dependent on initial values of both equations. 
To begin, we fix an $0< \ep \ll 1$ and assume
$$
\al= 1+\eps.
$$
Then we use the change of variables 
\begin{equation}\label{sc}
(\xi, \tau)= (\eps^\f{1}{2} x, \eps^2 t),\ \ \ y(x, t)= \eps  \Phi(\sqrt{\eps} x, \eps^2 t)- t.
\end{equation}
A straightforward calculation  transfers the equation \eqref{yt} into new variables as follows, 
\begin{eqnarray*}
	&&y_t= \eps^3 \Phi_{\tau}- 1,\ \ y_x= \eps^{\f{3}{2}} \Phi_{\xi},\ \ \f{(\al- 1) y_{x x}}{1+ y_x^2}= \f{\eps^3 \Phi_{\xi \xi}}{1+ \eps^3(\Phi_{\xi})^2}  \\
	&& \sqrt{1+  ( y_x)^2} = \sqrt{1+ \eps^3(\Phi_{\xi})^2}, \\
	&& \f{1}{\sqrt{1+ y_x^2}} \cdot \f{d^2 \kappa}{d x^2}= \f{\eps^3 \Phi_{\xi \xi \xi \xi}}{\bigg(1+  (\eps^{\f{3}{2}} \Phi_{\xi})^2\bigg)^2}- \f{3 \eps^6 (\Phi_{\xi \xi})^3+ 9 \eps^6 \Phi_{\xi} \Phi_{\xi \xi} \Phi_{\xi \xi \xi}}{\bigg(1+  (\eps^{\f{3}{2}} \Phi_{\xi})^2\bigg)^3 }+ \f{15 \eps^{9} (\Phi_{\xi})^2 (\Phi_{\xi \xi})^3 }{\bigg(1+  (\eps^{\f{3}{2}} \Phi_{\xi})^2\bigg)^4 }
\end{eqnarray*}
also,
\begin{eqnarray*}
	y_x \cdot \kappa \cdot \f{d \kappa}{d x}&=& \f{y_x y_{xx} y_{xxx}}{(1+ y_x^2)^3}- \f{3  (y_x)^2 (y_{xx})^3}{(1+ y_x^2)^4}=  \eps^6\f{\Phi_{\xi} \Phi_{ \xi \xi} \Phi_{ \xi \xi \xi}}{(1+ \eps^3 (\Phi_{\xi})^2)^3}- \f{3 \eps^9 (\Phi_{\xi})^2 (\Phi_{ \xi \xi})^3}{(1+ \eps^3 (\Phi_{\xi})^2)^4}
\end{eqnarray*}
Then
\begin{eqnarray*}
	\eps^3 \Phi_{\tau}&+&   \f{\eps^3  \Phi_{\xi \xi}}{1+ \eps^3 (\Phi_{\xi})^2}+ \al^2(\al+ 3) \f{\eps^3 \Phi_{\xi \xi \xi \xi}}{(1+ \eps^3 (\Phi_{\xi})^2)^2}+  \sqrt{1+ \eps^3 (\Phi_{\xi})^2}- 1 =    10 \al^2(\al+ 3) \f{\eps^6 \Phi_{\xi} \Phi_{\xi \xi} \Phi_{\xi \xi \xi}}{(1+ \eps^3 (\Phi_{\xi})^2)^3}\\
	&+&  3 \al^2(\al+ 3) \f{\eps^6 (\Phi_{\xi \xi})^3}{(1+ \eps^3 (\Phi_{\xi})^2)^3}-   18 \al^2(\al+ 3) \f{\eps^9 (\Phi_{\xi \xi})^3 (\Phi_{\xi})^2}{(1+ \eps^3 (\Phi_{\xi})^2)^4}- \bigg(1+ \f{1}{2} \al^2\bigg) \f{\eps^4 (\Phi_{\xi \xi})^2}{(1+ \eps^3 (\Phi_{\xi})^2)^{\f{5}{2}}}\\
	&-& \bigg(2 \al+ 5 \al^2- \f{1}{3} \al^3\bigg)  \f{\eps^6 (\Phi_{\xi \xi})^3}{(1+ \eps^3 (\Phi_{\xi})^2)^4}.
\end{eqnarray*}
This leads to 
\begin{eqnarray}\label{Phi0} 
\begin{cases}
\Phi_{\tau}+ \al^2(\al+ 3) \f{ \Phi_{\xi \xi \xi \xi}}{(1+ \eps^3 (\Phi_{\xi})^2)^2}= -   \f{ \Phi_{\xi \xi}}{1+ \eps^3 (\Phi_{\xi})^2} - \frac{(\Phi_{ \xi })^2}{1+ \sqrt{1+ \eps^3 (\Phi_{\xi})^2}}+    10 \al^2(\al+ 3) \f{\eps^3 \Phi_{\xi} \Phi_{\xi \xi} \Phi_{\xi \xi \xi}}{(1+ \eps^3 (\Phi_{\xi})^2)^3}+
\\ 
\hspace{2em}+  3 \al^2(\al+ 3) \f{\eps^3 (\Phi_{\xi \xi})^3}{(1+ \eps^3 (\Phi_{\xi})^2)^3}-   18 \al^2(\al+ 3) \f{\eps^6 (\Phi_{\xi \xi})^3 (\Phi_{\xi})^2}{(1+ \eps^3 (\Phi_{\xi})^2)^4}- \bigg(1+ \f{1}{2} \al^2\bigg) \f{\eps (\Phi_{\xi \xi})^2}{(1+ \eps^3 (\Phi_{\xi})^2)^{\f{5}{2}}}-\\
\hspace{2em}- \bigg(2 \al+ 5 \al^2- \f{1}{3} \al^3\bigg)  \f{\eps^3 (\Phi_{\xi \xi})^3}{(1+ \eps^3 (\Phi_{\xi})^2)^4}.
\end{cases}
\end{eqnarray}
In the above, if we put $\ep = 1$ and $\alpha=1$ we arrive at
$$
\Phi_\tau + 4 \Phi_{\xi\xi\xi\xi} = - \Phi_{\xi \xi} -{1\over 2} \Phi_\xi^2
$$
which is just \eqref{UU} with a new variable name. It is worth noting that 
putting $f(x, t)= \eps  U(\sqrt{\eps} x, \eps^2 t)$
 transfers \eqref{ff} into equation \eqref{UU} as well.
 The point of this section  is to make a rigorous the comparison of solutions of \eqref{Phi0}
 to those of \eqref{UU} when $\ep$ is small.
  
\subsection{Some \emph{apriori} estimate for the function $\Phi(\xi, \tau)$}
We now turn our attention to some bounds for the solution $\Phi$ of \eqref{Phi0} in Sobolev spaces. 
Specifically we have the following lemma:
\begin{lem}\label{lem3}
	Fix $\tau_*$ and $\Ga_*$. Then there exists constants $E_*$ and $\eps_*$ so that if $\|\Phi(0)\|_{H^4} \leq E_*$ and $0< \eps< \eps_*$ and if   $|\al- 1|= \eps$, then  
	\begin{eqnarray}
	\sup_{0< \tau< \tau_*} \|\Phi(\tau)\|_{H^4} \leq \Ga_*.
	\end{eqnarray}
\end{lem}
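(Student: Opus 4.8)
The plan is to reduce Lemma \ref{lem3} to the abstract Gronwall-type estimate of Lemma \ref{grr} by running an energy estimate on \eqref{Phi0} in $H^4$ while carefully tracking the powers of $\eps$. Set
\begin{equation*}
E(\tau)= \|\Phi(\tau)\|^2_{L^2}+ \|\partial_\xi^4 \Phi(\tau)\|^2_{L^2},
\end{equation*}
which is equivalent to $\|\Phi(\tau)\|^2_{H^4}$ by interpolation on the torus. The goal is to show that $E$ satisfies a differential inequality
\begin{equation*}
\f{d}{d\tau} E(\tau) \leq a\, E(\tau)+ b\, E^2(\tau)+ \eps\, E^m(\tau),
\end{equation*}
with $a,b,m$ fixed and independent of $\eps$ and $\Ga_*$. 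Once this is in hand, Lemma \ref{grr} (with $n=1$) produces $E_*$ and $\eps_*$ so that $E(0)\leq E_*$ and $\eps<\eps_*$ force $\sup_{0<\tau<\tau_*} E(\tau)\leq \Ga_*^2$, which is the claimed bound. The same a priori estimate, combined with a standard continuation argument, also supplies existence of $\Phi$ up to $\tau_*$, since $\Phi$ arises from the solution $y$ of \eqref{yt} through the scaling \eqref{sc} and Lemma \ref{lem0}.

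First I would carry out the estimate exactly as in Lemma \ref{lem00}: multiply \eqref{Phi0} by $\Phi$ and integrate, and separately apply $\partial_\xi^4$ to \eqref{Phi0}, pair with $\partial_\xi^4\Phi$, and integrate. In both estimates the fourth-order operator, after integrating by parts twice, yields on the left-hand side the coercive weighted quantities
\begin{equation*}
\al^2(\al+3)\int \f{(\partial_\xi^2\Phi)^2}{(1+\eps^3\Phi_\xi^2)^2}\,d\xi, \qquad \al^2(\al+3)\int \f{(\partial_\xi^6\Phi)^2}{(1+\eps^3\Phi_\xi^2)^2}\,d\xi,
\end{equation*}
up to lower-order pieces. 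Since $\al=1+\eps$, the prefactor $\al^2(\al+3)$ is bounded below by a fixed positive constant (near $4$) for $\eps$ small, so these terms are genuinely coercive with a constant independent of $\Phi$. As in Lemma \ref{lem00}, every right-hand contribution containing $\partial_\xi^5\Phi$ or $\partial_\xi^6\Phi$ is absorbed into the coercive term by Young's inequality, written against the matching weight $\|\partial_\xi^6\Phi/(1+\eps^3\Phi_\xi^2)^2\|_{L^2}$, at the cost of lower-order powers of $\|\Phi\|_{L^2}$ and $\|\partial_\xi^4\Phi\|_{L^2}$ estimated by Gagliardo--Nirenberg; crucially, no lower bound on the denominators is needed because the weights cancel.

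The decisive bookkeeping is the classification of the surviving terms. The destabilizing second-order term $-\Phi_{\xi\xi}/(1+\eps^3\Phi_\xi^2)$ is linear at leading order and, after interpolating the resulting $\|\Phi_\xi\|^2_{L^2}$ (respectively $\|\partial_\xi^5\Phi\|^2_{L^2}$) between the coercive term and lower norms, contributes the $a\,E$ term. The nonlinearity $\Phi_\xi^2/(1+\sqrt{1+\eps^3\Phi_\xi^2})$ is quadratic at leading order and, in the top-order estimate, produces the $b\,E^2$ term. Every remaining term in \eqref{Phi0} carries an explicit factor of $\eps$, $\eps^3$, or $\eps^6$; bounding each denominator $(1+\eps^3\Phi_\xi^2)^{-k}$ by $1$ and applying Gagliardo--Nirenberg, each such term is controlled by $\eps\, C\,(E+E^M)$ for a fixed $M$. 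Collecting these and absorbing the $\eps C E$ piece into $a\,E$ yields the differential inequality above with $m=M$.

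The main obstacle I anticipate is precisely this last classification: one must confirm that, after distributing derivatives by the product rule and absorbing all top-order terms, the only contributions lacking an $\eps$ prefactor are exactly the two arising from the genuine Kuramoto--Sivashinsky terms, so that every truly perturbative term is quantitatively small. This is a large but routine collection of integration-by-parts and interpolation estimates, entirely parallel to the treatment of $J_1,\dots,J_9$ in Lemma \ref{lem00}, the one new ingredient being the observation that the rescaling \eqref{sc} deposits an explicit power of $\eps$ in front of every term other than those producing the limiting equation \eqref{UU}.
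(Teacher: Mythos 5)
Your proposal is correct and follows essentially the same route as the paper's proof: combined $L^2$ and $\dot{H}^4$ energy estimates on \eqref{Phi0}, absorption of the fifth- and sixth-derivative terms into the coercive term via Young's inequality and Gagliardo--Nirenberg interpolation, bookkeeping of the $\eps$ prefactors so that only the genuine Kuramoto--Sivashinsky terms survive without a small factor, and conclusion via Lemma \ref{grr}. The only cosmetic difference is that you retain the weighted coercive quantity throughout (as in Lemma \ref{lem00}), whereas the paper invokes the lower bound \eqref{b0} to replace it by the unweighted $\|\partial_\xi^6\Phi\|_{L^2}^2$ --- a simplification the paper itself describes as optional.
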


Before the proof, note that this lemma tells us that after unraveling the scaling from \eqref{sc} to go from $\Phi$
back to $y$, we find that the solution $y(x,t)$ exists on the time interval $[0,\tau_*/\ep^2]$, far longer than the times of existence we found in the previous section.

\begin{proof}
	The proof goes by adding up two energy estimates together, one on $\|\Phi\|_{L^2}$ and the other one on $\|\partial_x^4 \Phi\|_{L^2}$. We first multiply the equation \eqref{Phi0} into $\Phi$ and take the integral to get the following energy estimate:
	\begin{eqnarray*}
	&&\frac{1}{2} \partial_t \|\Phi\|_{L^2}^2+ \al^2(\al+ 3) \|\f{\Phi_{ \xi \xi } }{1+ \eps^3 (\phi_{\xi})^2}\|_{L^2}^2=   -  \int  \f{\Phi_{ \xi \xi } \Phi}{1+ \eps^3 (\phi_{\xi})^2} d\xi-  \int  \frac{\Phi (\Phi_{ \xi })^2}{1+ \sqrt{1+ \eps^3 (\Phi_{\xi})^2}} d\xi\\ \nonumber
	&-&   \bigg(1+ \f{1}{2} \al^2\bigg)  \eps \int \f{\Phi (\Phi_{\xi \xi})^2}{(1+ \eps^3 (\Phi_{\xi})^2)^{\f{5}{2}}} d\xi+ 10 \al^2(\al+ 3) \eps^3 \int \f{\Phi \Phi_{\xi} \Phi_{\xi \xi} \Phi_{\xi \xi \xi} }{(1+ \eps^3 (\phi_{\xi})^2)^3}d\xi+  4 \al^2(\al+ 3)  \eps^3 \int \f{ (\Phi_{\xi \xi})^2 (\Phi_{\xi})^2 }{(1+ \eps^3 (\phi_{\xi})^2)^3 }d\xi\\ \nonumber
	&-& 4 \eps^3 \al^2(\al+ 3) \int \Phi_{ \xi \xi } \p_{\xi} \bigg[\f{\Phi \Phi_{ \xi } \Phi_{ \xi \xi }}{(1+ \eps^3 (\Phi_{ \xi })^2)^3}\bigg] d\xi +3 \al^2(\al+ 3) \eps^3 \int \f{ (\Phi_{\xi \xi})^3 \Phi }{(1+ \eps^3 (\phi_{\xi})^2)^3}d\xi\\ \nonumber
	&-& \bigg(2 \al+ 5 \al^2- \f{1}{3} \al^3\bigg) \eps^3  \int \f{ \Phi (\Phi_{\xi \xi})^3}{(1+ \eps^3 (\Phi_{\xi})^2)^4}-  18 \al^2(\al+ 3) \eps^6 \int \f{ (\Phi_{\xi \xi})^3 (\Phi_{\xi})^2 \Phi}{(1+ \eps^3 (\phi_{\xi})^2)^4}d\xi\\
	& :=& I_1+ I_2+ I_3+ I_4+ I_5+ I_6+ I_7+ I_8+ I_9.
	\end{eqnarray*}
Above we made a simplification on one of the integrals using integration by parts, 
\begin{eqnarray*}
\int  \f{\p^4_{\xi} \Phi \cdot  \Phi  }{(1+ \eps^3 (\phi_{\xi})^2 )^2} d\xi= \int  \f{(\p_{\xi \xi} \Phi )^2   }{(1+ \eps^3 (\phi_{\xi})^2 )^2} d\xi-  4   \eps^3 \int \f{ (\Phi_{\xi \xi})^2 (\Phi_{\xi})^2 }{(1+ \eps^3 (\phi_{\xi})^2)^3 }d\xi-  4 \eps^3 \int \Phi_{ \xi \xi } \p_{\xi} \bigg[\f{\Phi \Phi_{ \xi } \Phi_{ \xi \xi }}{(1+ \eps^3 (\Phi_{ \xi })^2)^3}\bigg].
\end{eqnarray*}
	Although this energy estimate is already in a nice form, and we can run the argument, we are still able to simplify the left hand side, which itself reduces many calculations. In fact we make use of the bounds in Lemma \ref{lem0}, and the scaling \eqref{sc}  
	\begin{eqnarray}\label{b0}
	\bigg\|\f{1}{1+ \eps^3 (\Phi_{ \xi })^2}\bigg\|_{L^{\infty}}= \bigg\|\f{1}{1+ (y_x)^2}\bigg\|_{L^{\infty}} \geq C_0
	\end{eqnarray}
	for some $C_0> 0$ fixed. Therefore, the above energy estimates turns into,
\begin{eqnarray}\label{es7}
\frac{1}{2} \partial_t \|\Phi\|_{L^2}^2+ C \al^2(\al+ 3) \|\Phi_{ \xi \xi }\|_{L^2}^2 \leq \bigg|I_1+ I_2+ I_3+ I_4+ I_5+ I_6+ I_7+ I_8\bigg|.
\end{eqnarray}
	We now try to find a proper bound for the right hand side of this equality. \\
	\textbf{Estimate for $I_1$:} 
	\begin{eqnarray*}
		|I_1|=  \bigg| \int  \f{\Phi_{ \xi \xi } \Phi}{1+ \eps^3 (\phi_{\xi})^2} d\xi\bigg| \leq C \|\Phi\|_{L^2} \|\Phi_{\xi \xi}\|_{L^2} \leq  \|\Phi\|_{L^2} \bigg(\|\Phi\|^{\f{1}{2}}_{L^2} \|\partial^4 \Phi\|^{\f{1}{2}}_{L^2} \bigg) \leq C \bigg(\|\Phi\|^2_{L^2}+ \|\partial^4 \Phi\|^2_{L^2} \bigg)
	\end{eqnarray*}
	\textbf{Estimate for $I_2$:} 
	\begin{eqnarray*}
		|I_2|&=&  \bigg|  \int  \frac{\Phi (\Phi_{ \xi })^2}{1+ \sqrt{1+ \eps^3 (\Phi_{\xi})^2}} d\xi\bigg| \leq \bigg\| \frac{1}{1+ \sqrt{1+ \eps^3 (\Phi_{\xi})^2}}\bigg\|_{L^{\infty}} \|\Phi\|_{L^{\infty}} \|\Phi_{\xi}\|^2_{L^2} \leq C  \||\nabla|^{\f{1}{2}}\Phi\|_{L^2} \|\Phi_{\xi}\|^2_{L^2}\\
		& \leq& C \bigg(\|\Phi\|^{\f{7}{8}}_{L^2} \|\partial^4 \Phi\|^{\f{1}{8}}_{L^2} \bigg) \bigg(\|\Phi\|^{\f{3}{4}}_{L^2} \|\partial^4_{\xi} \Phi\|^{\f{1}{4}}_{L^2} \bigg)^2 \leq  C \bigg(\|\Phi\|^2_{L^2}+ \|\partial^4_{\xi} \Phi\|^2_{L^2} \bigg)+ C \bigg(\|\Phi\|^4_{L^2}+ \|\partial^4_{\xi} \Phi\|^4_{L^2} \bigg).
	\end{eqnarray*}

	\textbf{Estimate for $I_3$:} 
	\begin{eqnarray*}
		|I_3|&=&   \eps  \bigg(1+ \f{1}{2} \al^2\bigg)  \bigg| \int \f{\Phi (\Phi_{\xi \xi})^2}{(1+ \eps^3 (\Phi_{\xi})^2)^{\f{5}{2}}} d\xi\bigg| \leq  \eps \|\Phi\|_{L^2} \|\Phi_{\xi \xi}\|^2_{L^2}  \bigg\| \frac{1}{ (1+ \eps^3 (\Phi_{\xi})^2)^{\f{5}{2}}}\bigg\|_{L^{\infty}}\\
		& \leq& \eps \|\Phi\|_{L^2} \bigg(\|\Phi\|^{\f{1}{2}}_{L^2} \|\partial^4 \Phi\|^{\f{1}{2}}_{L^2} \bigg)^2 
		\leq C \eps \bigg(\|\Phi\|^3_{L^2}+ \|\partial^4 \Phi\|^3_{L^2}\bigg) \\ 
		&\leq& \bigg( \|\Phi\|^2_{L^2}+ \|\partial^4 \Phi\|^2_{L^2}  \bigg)+ C \eps^2 \bigg( \|\Phi\|^4_{L^2}+ \|\partial^4 \Phi\|^4_{L^2}  \bigg).
	\end{eqnarray*}

	\textbf{Estimate for $I_4$:} 
	\begin{eqnarray*}
		|I_4|&=&    6 \al^2(\al+ 3) \eps^3 \bigg|  \int \f{\Phi \Phi_{\xi} \Phi_{\xi \xi} \Phi_{\xi \xi \xi} }{(1+ \eps^3 (\phi_{\xi})^2)^3}d\xi\bigg| \leq \eps^3 \bigg\| \frac{1}{ (1+ \eps^3 (\Phi_{\xi})^2)^3}\bigg\|_{L^{\infty}} \|\Phi\|_{L^{\infty}} \|\Phi_{ \xi }\|_{L^{\infty}}  \|\Phi_{\xi \xi}\|_{L^2} \|\Phi_{\xi \xi \xi}\|_{L^2}  \\
		&\leq& C \eps^3  \||\nabla|^{\f{1}{2}} \Phi\|_{L^2} \||\nabla|^{\f{1}{2}} \Phi_{\xi}\|_{L^2} \|\Phi_{\xi \xi}\|_{L^2} \|\Phi_{\xi \xi \xi}\|_{L^2} \\
		&\leq& C \eps^3  \bigg(\|\Phi\|^{\f{7}{8}}_{L^2} \|\partial_{\xi}^4 \Phi\|^{\f{1}{8}}_{L^2} \bigg)  \bigg(\|\Phi\|^{\f{5}{8}}_{L^2} \|\partial_{\xi}^4 \Phi\|^{\f{3}{8}}_{L^2} \bigg)   \bigg(\|\Phi\|^{\f{1}{2}}_{L^2} \|\partial_{\xi}^4 \Phi\|^{\f{1}{2}}_{L^2} \bigg) \bigg(\|\Phi\|^{\f{1}{4}}_{L^2} \|\partial_{\xi}^4 \Phi\|^{\f{3}{4}}_{L^2} \bigg)\\
		& \leq& C \eps^3   \bigg(\|\Phi\|^4_{L^2}+ \|\partial_{\xi}^4 \Phi\|^4_{L^2} \bigg).
	\end{eqnarray*}
	\textbf{Estimate for $I_5$:} 
	\begin{eqnarray*}
	|I_5| &\leq& 4 \eps^3 \al^2(\al+ 3) \bigg|\int \Phi_{ \xi \xi } \p_{\xi} \bigg[\f{\Phi \Phi_{ \xi } \Phi_{ \xi \xi }}{(1+ \eps^3 (\Phi_{ \xi })^2)^3}\bigg] d\xi\bigg|\\
	& \leq& C \eps^3 \|\Phi_{ \xi \xi }\|_{L^2}  \|\Phi_{ \xi \xi \xi}\|_{L^2} \|\Phi\|_{L^{\infty}} \|\Phi_{\xi}\|_{L^{\infty}} \| \f{1}{1+ \eps^3 (\Phi_{ \xi })^2}\|_{L^{\infty}}\\
	&\leq& C \eps^3 \|\Phi_{ \xi \xi }\|_{L^2}  \|\Phi_{ \xi \xi \xi}\|_{L^2} \||\nabla |^{\f{1}{2}} \Phi\|_{L^2} \||\nabla |^{\f{3}{2}} \Phi\|_{L^2}\\
	& \leq& C \eps^3 \bigg(\|\Phi\|^{\f{1}{2}}_{L^2} \|\partial_{\xi}^4 \Phi\|^{\f{1}{2}}_{L^2} \bigg) \bigg(\|\Phi\|^{\f{1}{4}}_{L^2} \|\partial_{\xi}^4 \Phi\|^{\f{3}{4}}_{L^2} \bigg)  \bigg(\|\Phi\|^{\f{7}{8}}_{L^2} \|\partial_{\xi}^4 \Phi\|^{\f{1}{8}}_{L^2} \bigg) \bigg(\|\Phi\|^{\f{5}{8}}_{L^2} \|\partial^4 \Phi\|^{\f{3}{8}}_{L^2} \bigg)\\
	&\leq& C \eps^3 \bigg(\|\Phi\|^4_{L^2}+ \|\partial_{\xi}^4 \Phi\|^4_{L^2} \bigg).
	\end{eqnarray*}
	\textbf{Estimate for $I_6$:} 
	\begin{eqnarray*}
		|I_6|&=&    4 \al^2(\al+ 3)  \eps^3 \bigg|  \int \f{ (\Phi_{\xi \xi})^2 (\Phi_{\xi})^2 }{(1+ \eps^3 (\phi_{\xi})^2)^3 }d\xi\bigg| \leq \eps^3 \bigg\| \frac{1}{(1+ \eps^3 (\Phi_{\xi})^2)^3}\bigg\|_{L^{\infty}} \|\Phi_{\xi \xi}\|^2_{L^2} \|\Phi_{\xi}\|^2_{L^{\infty}}   \\
		&\leq& C \eps^3   \|\Phi_{\xi \xi}\|^2_{L^2} \||\nabla|^{\f{1}{2}} \Phi_{\xi}\|^2_{L^2} \leq C \eps^3   \bigg(\|\Phi\|^{\f{1}{2}}_{L^2} \|\partial^4 \Phi\|^{\f{1}{2}}_{L^2} \bigg)^2 \bigg(\|\Phi\|^{\f{5}{8}}_{L^2} \|\partial^4 \Phi\|^{\f{3}{8}}_{L^2} \bigg)^2 \leq \eps^3   \bigg(\|\Phi\|^4_{L^2}+ \|\partial^4 \Phi\|^4_{L^2} \bigg).
	\end{eqnarray*}
	\textbf{Estimate for $I_7$:} 
	\begin{eqnarray*}
		|I_6|&=&    3 \al^2(\al+ 3)  \eps^3 \bigg|  \int \f{ (\Phi_{\xi \xi})^3 \Phi }{(1+ \eps^3 (\phi_{\xi})^2)^3}d\xi\bigg| \leq C \eps^3 \bigg\| \frac{1}{(1+ \eps^3 (\Phi_{\xi})^2)^3}\bigg\|_{L^{\infty}} \|\Phi_{\xi \xi}\|^3_{L^3} \|\Phi\|_{L^{\infty}}   \\
		&\leq& C \eps^3   \||\nabla|^{\f{13}{6}}\Phi\|^3_{L^2} \||\nabla|^{\f{1}{2}} \Phi\|_{L^2} \leq C \eps^3   \bigg(\|\Phi\|^{\f{11}{24}}_{L^2} \|\partial^4 \Phi\|^{\f{13}{24}}_{L^2} \bigg)^3 \bigg(\|\Phi\|^{\f{7}{8}}_{L^2} \|\partial^4 \Phi\|^{\f{1}{8}}_{L^2} \bigg) \leq C \eps^3   \bigg(\|\Phi\|^4_{L^2}+ \|\partial^4 \Phi\|^4_{L^2} \bigg).
	\end{eqnarray*}
	\textbf{Estimate for $I_8$:} Similar to $I_7$.\\
	\textbf{Estimate for $I_9$:} 
	\begin{eqnarray*}
		|I_9|&=&   18 \al^2(\al+ 3) \eps^6 \bigg|  \int \f{ (\Phi_{\xi \xi})^3 (\Phi_{\xi})^2 \Phi}{(1+ \eps^3 (\phi_{\xi})^2)^4}d\xi\bigg| \leq C \bigg\| \frac{ \eps^3 (\Phi_{\xi})^2}{(1+ \eps^3 (\Phi_{\xi})^2)^4}\bigg\|_{L^{\infty}} \bigg( \eps^3 \|\Phi_{\xi \xi}\|^3_{L^3} \|\Phi\|_{L^{\infty}} \bigg)  \\
		&\leq& C \eps^3   \||\nabla|^{\f{13}{6}}\Phi\|^3_{L^2} \||\nabla|^{\f{1}{2}} \Phi\|_{L^2} \leq C \eps^3   \bigg(\|\Phi\|^{\f{11}{24}}_{L^2} \|\partial_{\xi}^4 \Phi\|^{\f{13}{24}}_{L^2} \bigg)^3 \bigg(\|\Phi\|^{\f{7}{8}}_{L^2} \|\partial_{\xi}^4 \Phi\|^{\f{1}{8}}_{L^2} \bigg) \leq C \eps^3   \bigg(\|\Phi\|^4_{L^2}+ \|\partial_{\xi}^4 \Phi\|^4_{L^2} \bigg).
	\end{eqnarray*} 
Overall, the energy estimates \eqref{es7} is transfered into
\begin{eqnarray}\label{gr0}
\f{1}{2} \partial_t \|\Phi\|^2_{L^2} \leq C \bigg( \|\Phi\|^2_{L^2}+ \|\partial_{\xi}^4 \Phi\|^2_{L^2}  \bigg)+ C \bigg( \|\Phi\|^4_{L^2}+ \|\partial_{\xi}^4 \Phi\|^4_{L^2}  \bigg).
\end{eqnarray}
	
	As it was mentioned before, our argument is based on a combined energy estimate. For the other term in the energy estimate, we take $4$ times derivative of the equation \eqref{Phi0}, and then find the inner product of the resulting equation with $\partial_{\xi}^4 \Phi$, 
	\begin{eqnarray}\label{en0}
	&&\frac{1}{2} \partial_t \|\partial_{\xi}^4 \Phi\|_{L^2}^2+ \al^2(\al+ 3)  \int  \partial_{\xi}^4 [\f{\partial_\xi^4 \Phi}{(1+ \eps^3 (\Phi_{ \xi })^2 )^2}] \partial_{\xi}^4 \Phi d\xi= -   \int  \partial_{\xi}^4 [\f{ \Phi_{\xi \xi}}{1+ \eps^3 (\Phi_{ \xi })^2 }] \partial_{\xi}^4 \Phi d\xi\\ \nonumber
	&-&      \int  \partial_{\xi}^4 [\frac{(\Phi_{ \xi })^2}{1+ \sqrt{1+ \eps^3 (\Phi_{\xi})^2}} ] \partial_{\xi}^4 \Phi d\xi+   10 \al^2(\al+ 3) \eps^3 \int \partial^4_{\xi} \bigg[\f{ \Phi_{\xi} \Phi_{\xi \xi} \Phi_{\xi \xi \xi} }{(1+ \eps^3 (\phi_{\xi})^2)^3} \bigg] \partial^4_{\xi} \Phi d\xi\\ \nonumber
	&+&  3 \al^2(\al+ 3) \eps^3 \int \partial^4_{\xi} \bigg[\f{ (\Phi_{\xi \xi})^3  }{(1+ \eps^3 (\phi_{\xi})^2)^3} \bigg] \partial^4_{\xi} \Phi d\xi- 18 \al^2(\al+ 3) \eps^6 \int \partial^4_{\xi} \bigg[\f{  (\Phi_{\xi \xi})^3 (\Phi_{\xi})^2 }{(1+ \eps^3 (\phi_{\xi})^2)^3} \bigg] \partial^4_{\xi} \Phi d\xi\\ \nonumber
	&-& \bigg(1+ \f{1}{2} \al^2\bigg)  \eps \int \p_{\xi}^4 \bigg[ \f{ (\Phi_{\xi \xi})^2}{(1+ \eps^3 (\Phi_{\xi})^2)^{\f{5}{2}}} \bigg] \p_{\xi}^4 \Phi d\xi - \bigg(2 \al+ 5 \al^2- \f{1}{3} \al^3\bigg) \eps^3  \int \p_{\xi}^4 \bigg[ \f{ (\Phi_{\xi \xi})^3}{(1+ \eps^3 (\Phi_{\xi})^2)^4} \bigg] \p_{\xi}^4 \Phi d\xi.
	\end{eqnarray}
	Although we can simplify most of the terms in this relation, we let most of them in the current form, as they are easily bounded in the current form. However, 
	\begin{eqnarray*}
		&&\al^2(\al+ 3)  \int  \partial_{\xi}^4 [\f{\partial_\xi^4 \Phi}{(1+ \eps^3 (\Phi_{ \xi })^2 )^2}] \partial_{\xi}^4 \Phi d\xi= \al^2(\al+ 3)  \int  \f{(\partial_\xi^6 \Phi)^2}{(1+ \eps^3 (\Phi_{ \xi })^2 )^2} d\xi\\
		&&- 4 \eps^3 \al^2 (\al+ 3) \int  \f{(\partial_{\xi}^6 \Phi) (\partial_\xi^5 \Phi) \Phi_{ \xi } \Phi_{ \xi \xi } }{(1+ \eps^3 (\Phi_{ \xi })^2 )^3} d\xi
		-  4 \eps^3 \al^2 (\al+ 3) \int (\partial_{\xi}^6 \Phi)  (\partial_\xi^4 \Phi) \p_{\xi} \bigg[\f{ \Phi_{ \xi } \Phi_{ \xi \xi } }{(1+ \eps^3 (\Phi_{ \xi })^2 )^3} \bigg] d\xi.
	\end{eqnarray*}
	Then, the energy estimates \eqref{en0} turns into
	\begin{eqnarray}\label{en11}
	&&\frac{1}{2} \partial_t \|\partial_{\xi}^4 \Phi\|_{L^2}^2+ \al^2(\al+ 3)  \int  \f{(\partial_\xi^6 \Phi)^2}{(1+ \eps^3 (\Phi_{ \xi })^2 )^2} d\xi= -   \int \partial_{\xi}^6 \Phi \cdot  \partial_{\xi}^2 \bigg[\f{ \Phi_{\xi \xi}}{1+ \eps^3 (\Phi_{ \xi })^2 }\bigg] d\xi\\ \nonumber
	&-&      \int \partial_{\xi}^6 \Phi \cdot \partial_{\xi}^2 \bigg[\frac{(\Phi_{ \xi })^2}{1+ \sqrt{1+ \eps^3 (\Phi_{\xi})^2}} \bigg]  d\xi- \bigg(1+ \f{1}{2} \al^2\bigg)  \eps \int \partial_{\xi}^6 \Phi \cdot \partial^2_{\xi} \bigg[\f{ (\Phi_{\xi \xi})^2}{(1+ \eps^3 (\Phi_{\xi})^2)^{\f{5}{2}}}\bigg] d\xi \\ \nonumber
	&+&   6 \al^2(\al+ 3) \eps^3 \int \partial_{\xi}^6 \Phi \cdot \partial^2_{\xi} \bigg[\f{ \Phi_{\xi} \Phi_{\xi \xi} \Phi_{\xi \xi \xi} }{(1+ \eps^3 (\phi_{\xi})^2)^3} \bigg] \partial^4_{\xi} \Phi d\xi+  3 \al^2(\al+ 3) \eps^3 \int  \partial_{\xi}^6 \Phi \cdot \partial^2_{\xi} \bigg[\f{ (\Phi_{\xi \xi})^3  }{(1+ \eps^3 (\phi_{\xi})^2)^3} \bigg]  d\xi\\ \nonumber
	&+& 4 \al^2(\al+ 3) \eps^3 \int \f{(\partial^6 \Phi) (\partial^5 \Phi) (\Phi_{\xi}) (\Phi_{ \xi \xi })}{(1+ \eps^3 (\Phi_{ \xi })^2 )^3} d\xi+ 4 \al^2(\al+ 3) \eps^3 \int (\partial_{\xi}^6 \Phi)    (\partial^4 \Phi) \partial_{\xi} \bigg[\f{ (\Phi_{\xi}) (\Phi_{ \xi \xi })}{(1+ \eps^3 (\Phi_{ \xi })^2 )^3}\bigg] d\xi
	\\ \nonumber
	&-&  \bigg(2 \al+ 5 \al^2- \f{1}{3} \al^3\bigg) \eps^3  \int \partial_{\xi}^6 \Phi \cdot \partial^2_{\xi} \bigg[ \f{  (\Phi_{\xi \xi})^3}{(1+ \eps^3 (\Phi_{\xi})^2)^4} \bigg] d\xi - 18 \al^2(\al+ 3) \eps^6 \int \partial_{\xi}^6 \Phi \cdot \partial^2_{\xi} \bigg[\f{  (\Phi_{\xi \xi})^3 (\Phi_{\xi})^2 }{(1+ \eps^3 (\phi_{\xi})^2)^4} \bigg]  d\xi\\ \nonumber
	&:=& J_1+ \cdots+ J_9.
	\end{eqnarray}
	As we argued in \eqref{es7}, we work with a simpler version of this energy estimate
		\begin{eqnarray}\label{en1}
\frac{1}{2} \partial_t \|\partial_x^4 \Phi\|_{L^2}^2+ C_0 \al^2(\al+ 3)  \int  \|\partial_\xi^6 \Phi\|_{L^2} d\xi
	&\leq& \bigg|J_1+ \cdots+ J_9\bigg|.
	\end{eqnarray}
	 As it is stated before, in each term we present the bound for the worse part of the integral, and that happens when in the integrand,  the two derivatives hit the highest degree in the numerator of the fraction. We denote the rest of the terms $G(\tau)$ which letter $G$ stands for good terms. One type of such (good) terms arises when derivatives hit the denominator. Any time a derivative is applied to the denominator, which is of the form $\frac{1}{\bigg(1+ \eps^3 (\Phi_{\xi})^2 \bigg)^{a}}$, it multiplies the  integrand in  $\frac{\eps^3 \Phi_{ \xi } \Phi_{ \xi \xi }}{\bigg(1+ \eps^3 (\Phi_{\xi})^2 \bigg)^{a+ 1}}$. These kind of terms are controlled in the following way,
	\begin{eqnarray*}
	\bigg\|\f{\eps^3 \Phi_{ \xi } \Phi_{ \xi \xi }}{\bigg(1+ \eps^3 (\Phi_{\xi})^2 \bigg)^{a+ 1}}\bigg\|_{L^{\infty}} \leq \eps^{\f{3}{2}} \bigg\|\f{\eps^{\f{3}{2}} \Phi_{\xi} }{\bigg(1+ \eps^3 (\Phi_{\xi})^2 \bigg)^{a+ 1}}\bigg\|_{L^{\infty}} \|\Phi_{ \xi \xi }\|_{L^{\infty}} \leq C \eps^{\f{3}{2}} \|\Phi_{ \xi \xi }\|_{L^{\infty}} \leq  C \eps^{\f{3}{2}} \||\nabla|^{\f{5}{2}}\Phi\|_{L^2} \leq C \eps^{\f{3}{2}} \|\Phi\|^{\f{3}{2}}_{L^2} \|\partial^4 \Phi\|^{\f{5}{8}}_{L^2}.
	\end{eqnarray*}
Although this might increase the power of  $ \|\Phi\|_{L^2}$ and  $\|\partial_x^4 \Phi\|_{L^2}$ in our final calculations, it also adds the power $\eps$ in front of every such terms, which fits our Gronwall's inequality \eqref{grr}.  For the rest of the proof, we ignore the good terms $G(\tau)$, and in each integral in \eqref{en11}, we present the proper bound   for the worse term. \\
	\textbf{Estimate for $J_1$:}
	 \begin{eqnarray*}
	 	|J_1|&=&  \bigg| \int \partial_{\xi}^6 \Phi \cdot  \partial_{\xi}^2 \bigg[\f{ \Phi_{\xi \xi}}{1+ \eps^3 (\Phi_{ \xi })^2 }\bigg] d\xi\bigg| \leq C \|\partial_{\xi}^4 \Phi\|_{L^2} \| \partial_{\xi}^6 \Phi\|_{L^2} \bigg\|\f{1}{1+ \eps^3 (\Phi_{\xi})^2}\bigg\|_{L^{\infty}}+ G(\tau)\\
	 	 &\leq& \f{C_0 \al^2(\al+ 3)}{100} \|\partial_{\xi}^6 \Phi\|^2_{L^2}+  C \bigg( \|\Phi\|^2_{L^2}+ \|\partial_{\xi}^4 \Phi\|^2_{L^2} \bigg)+ G(\tau)
	 \end{eqnarray*}
	\textbf{Estimate for $J_2$:}  
	\begin{eqnarray*}
		|J_2|&=&  \bigg|  \int \partial_{\xi}^6 \Phi \cdot \partial_{\xi}^2 \bigg[\frac{(\Phi_{ \xi })^2}{1+ \sqrt{1+ \eps^3 (\Phi_{\xi})^2}} \bigg]  d\xi\bigg| \leq   \|\partial_{\xi}^6 \Phi\|_{L^2} \bigg\| \f{1}{1+ \sqrt{1+ \eps^3 (\Phi_{\xi})^2}}\bigg\|_{L^{\infty}} \|\Phi_{\xi}\|_{L^{\infty}} \|\partial_{\xi}^3 \Phi\|_{L^2}+ G(\tau)\\
		& \leq& \|\partial_{\xi}^6 \Phi\|_{L^2}   \||\nabla|^{\f{3}{2}}\Phi\|_{L^2} \|\partial_{\xi}^3 \Phi\|_{L^2} \leq \|\partial_{\xi}^6 \Phi\|_{L^2} \bigg(\|\Phi\|^{\f{5}{8}}_{L^2} \|\partial_{\xi}^4 \Phi\|^{\f{3}{8}}_{L^2} \bigg)  \bigg(\|\Phi\|^{\f{1}{4}}_{L^2} \|\partial_{\xi}^4 \Phi\|^{\f{3}{4}}_{L^2} \bigg)+ G(\tau)\\
		& \leq& \f{C_0 \al^2(\al+ 3)}{100} \|\partial_x^6 \Phi\|^2_{L^2} + C \bigg( \|\partial_{\xi}^4 \Phi\|^4_{L^2}+ \| \Phi\|^4_{L^2}\bigg)+ G(\tau).
	\end{eqnarray*}

	\textbf{Estimate for $J_3$:} 
	\begin{eqnarray*}
		|J_3|&=&    \bigg(1+ \f{1}{2} \al^2\bigg)  \eps \bigg|\int \partial_{\xi}^6 \Phi \cdot \partial^2_{\xi} \bigg[\f{ (\Phi_{\xi \xi})^2}{(1+ \eps^3 (\Phi_{\xi})^2)^{\f{5}{2}}}\bigg] d\xi\bigg| \leq C \eps  \|\partial_{\xi}^6 \Phi\|_{L^2}  \|\Phi_{\xi \xi}\|_{L^{\infty}} \|\partial_{\xi}^4 \Phi\|_{L^2}+ G(\tau)\\
		&\leq& C \eps  \|\partial_{\xi}^6 \Phi\|_{L^2}  \||\nabla|^{\f{5}{2}} \Phi\|_{L^2} \|\partial_{\xi}^4 \Phi\|_{L^2}+ G(\tau)  \leq C \eps \|\partial_{\xi}^6 \Phi\|_{L^2}  \bigg(\|\Phi\|^{\f{3}{8}}_{L^2} \|\partial_{\xi}^4 \Phi\|^{\f{5}{8}}_{L^2} \bigg)  \|\partial_{\xi}^4 \Phi\|_{L^2}+ G(\tau)\\
		 &\leq& \f{C_0 \al^2(\al+ 3)}{100}  \|\partial_x^6 \Phi\|^2_{L^2} + C \eps^2 \bigg( \| \Phi\|^4_{L^2}+  \|\partial^4 \Phi\|^4_{L^2}\bigg)+ G(\tau).
	\end{eqnarray*}

	\textbf{Estimate for $J_4$:} 
	\begin{eqnarray*}
		|J_4|&=&    6 \al^2(\al+ 3) \eps^3 \bigg| \int \partial_{\xi}^6 \Phi \cdot \partial^2_{\xi} \bigg[\f{ \Phi_{\xi} \Phi_{\xi \xi} \Phi_{\xi \xi \xi} }{(1+ \eps^3 (\phi_{\xi})^2)^3} \bigg]  d\xi\bigg| \leq \eps^3 \|\partial_{\xi}^6 \Phi\|_{L^2}     \|\Phi_{\xi}\|_{L^{\infty}} \|\Phi_{\xi \xi}\|_{L^{\infty}}  \|\partial_{\xi}^5 \Phi\|_{L^2}+ G(\tau)\\
		&\leq& \eps^3 \|\partial_{\xi}^6 \Phi\|_{L^2} \||\nabla|^{\f{3}{2}} \Phi\|_{L^2}  \||\nabla|^{\f{5}{2}} \Phi\|_{L^2}  \|\partial_{\xi}^5 \Phi\|_{L^2}\\
		& \leq&  \eps^3 \|\partial_{\xi}^6 \Phi\|_{L^2}  \bigg(\|\Phi\|^{\f{5}{8}}_{L^2} \|\partial_{\xi}^4 \Phi\|^{\f{3}{8}}_{L^2} \bigg)  \bigg(\|\Phi\|^{\f{3}{8}}_{L^2} \|\partial_{\xi}^4 \Phi\|^{\f{5}{8}}_{L^2} \bigg)   \bigg( \|\partial_{\xi}^4 \Phi\|^{\f{1}{2}}_{L^2} \|\partial{\xi}^6 \Phi\|^{\f{1}{2}}_{L^2} \bigg)+ G(\tau) \\
		&\leq&  C \eps^{3} \|\partial_{\xi}^6 \Phi\|^{\f{3}{2}}_{L^2}  \bigg( \| \Phi\|_{L^2} \|\partial_{\xi}^4 \Phi\|^{\f{3}{2}}_{L^2}  \bigg)+ G(\tau)  \leq  \f{C_0 \al^2(\al+ 3)}{100}  \|\partial_{\xi}^6 \Phi\|^2_{L^2}+ \eps^{12}  \bigg( \| \Phi\|^4_{L^2} \|\partial_{\xi}^4 \Phi\|^6_{L^2}  \bigg)+ G(\tau)  \\
		& \leq& \f{C_0 \al^2(\al+ 3)}{100}  \|\partial_{\xi}^6 \Phi\|^2_{L^2}+ \eps^{12}   \bigg(\|\Phi\|^{10}_{L^2}+ \|\partial_{\xi}^4 \Phi\|^{10}_{L^2} \bigg)+ G(\tau).
	\end{eqnarray*}
	\textbf{Estimate for $J_5$:} 
	\begin{eqnarray*}
	|J_5|&=& 3 \al^2(\al+ 3) \eps^3 \bigg|\int  \partial_{\xi}^6 \Phi \cdot \partial^2_{\xi} \bigg[\f{ (\Phi_{\xi \xi})^3  }{(1+ \eps^3 (\phi_{\xi})^2)^3} \bigg]  d\xi \bigg| \\
	&\leq&  \eps^3 \bigg\| \frac{1}{(1+ \eps^3 (\Phi_{\xi})^2)^3}\bigg\|_{L^{\infty}}  \|\partial_{\xi}^6 \Phi\|_{L^2} \|\partial_{\xi}^4 \Phi\|_{L^2}  \| \Phi_{\xi \xi}\|^2_{L^{\infty}}+ G(\tau) \\
	& \leq& \f{C_0 \al^2(\al+ 3)}{100}  \|\partial_{\xi}^6 \Phi\|^2_{L^2}+ C \eps^{6}   \bigg(\|\Phi\|^6_{L^2}+ \|\partial_{\xi}^4 \Phi\|^6_{L^2} \bigg)+ G(\tau)\\
		& \leq& \f{C_0 \al^2(\al+ 3)}{100}  \|\partial_{\xi}^6 \Phi\|^2_{L^2}+ C \bigg(\|\Phi\|^2_{L^2}+ \|\partial_{\xi}^4 \Phi\|^2_{L^2} \bigg)+ C \eps^{12}   \bigg(\|\Phi\|^{10}_{L^2}+ \|\partial_{\xi}^4 \Phi\|^{10}_{L^2} \bigg)+ G(\tau).
	\end{eqnarray*}
	Now we have use relation 
	
		\textbf{Estimate for $J_6$:}  
	\begin{eqnarray*}
		|J_6|&=&      4 \al^2(\al+ 3) \eps^3 \bigg|   \int \f{(\partial_{\xi}^6 \Phi) (\partial_{\xi}^5 \Phi) (\Phi_{\xi}) (\Phi_{ \xi \xi })}{(1+ \eps^3 (\Phi_{ \xi })^2 )^3} d\xi\bigg|\\
		& \leq& C \eps^3 \bigg\| \frac{1}{(1+ \eps^3 (\Phi_{\xi})^2)^3}\bigg\|_{L^{\infty}}  \|\partial_{\xi}^6 \Phi\|_{L^2}  \|\partial_{\xi}^5 \Phi\|_{L^2}    \|\Phi_{\xi \xi}\|_{L^{\infty}} \| \Phi_{ \xi }\|_{L^{\infty}}+ G(\tau) \\
		&\leq& C \eps^3    \|\partial_{\xi}^6 \Phi\|_{L^2}   \|\partial_{\xi}^5 \Phi\|_{L^2}   \||\nabla|^{\f{5}{2}} \Phi\|_{L^2}    \||\nabla|^{\f{3}{2}} \Phi\|_{L^2}\\
		& \leq& C \eps^3    \|\partial_{\xi}^6 \Phi\|_{L^2} \bigg(\|\partial_{\xi}^4 \Phi\|^{\f{1}{2}}_{L^2} \|\partial_{\xi}^6 \Phi\|^{\f{1}{2}}_{L^2}  \bigg) \bigg(\|\Phi\|^{\f{3}{8}}_{L^2} \|\partial_{\xi}^4 \Phi\|^{\f{5}{8}}_{L^2} \bigg) \bigg(\|\Phi\|^{\f{5}{8}}_{L^2} \|\partial_{\xi}^4 \Phi\|^{\f{3}{8}}_{L^2} \bigg)+ G(\tau)\\ 
		 & \leq& C \eps^3    \|\partial_{\xi}^6 \Phi\|_{L^2}^{\f{3}{2}}  \bigg(\|\Phi\|_{L^2} \|\partial_{\xi}^4 \Phi\|^{\f{3}{2}}_{L^2} \bigg)+ G(\tau)  \leq \f{C_0 \al^2(\al+ 3)}{100}   \|\partial_{\xi}^6 \Phi\|^2_{L^2}+ \eps^{12}   \bigg(\|\Phi\|^{10}_{L^2}+ \|\partial_{\xi}^4 \Phi\|^{10}_{L^2} \bigg)+ G(\tau).
	\end{eqnarray*}
	\textbf{Estimate for $J_7$:}  
		\begin{eqnarray*}
		|J_7|&=&   4 \eps^3 \al^2(\al+ 3)  \bigg|  \int \partial_{\xi}^6 \Phi \cdot \partial_{\xi}^4 \Phi \partial_{\xi} \cdot \bigg[\f{  (\Phi_{\xi \xi}) (\Phi_{\xi}) }{(1+ \eps^3 (\phi_{\xi})^2)^3} \bigg]  d\xi\bigg|\\
		& \leq& C \eps^{\f{3}{2}}  \|\partial_{\xi}^6 \Phi\|_{L^2}  \|\partial_{\xi}^4 \Phi\|_{L^2}  \| \Phi_{\xi \xi \xi}\|_{L^{\infty}} \|\f{ \eps^{\f{3}{2}}  \Phi_{\xi} }{(1+ \eps^3 (\Phi_{\xi}))^2)^3}\|_{L^{\infty}} + G(\tau)  \\
		&\leq&  C \eps^{6}  \|\partial_{\xi}^6 \Phi\|_{L^2}    \|\partial_{\xi}^4 \Phi\|_{L^2}  \bigg(\|\Phi\|^{\f{1}{8}}_{L^2} \|\partial_{\xi}^4 \Phi\|^{\f{7}{8}}_{L^2} \bigg)+ G(\tau)\\
		&\leq& \f{C_0 \al^2(\al+ 3)}{100}   \|\partial_{\xi}^6 \Phi\|^2_{L^2}+ C \eps^{3}   \bigg(\|\Phi\|^4_{L^2}+ \|\partial_{\xi}^4 \Phi\|^4_{L^2} \bigg)+ G(\tau).
	\end{eqnarray*}
	\textbf{Estimate for $J_8$:}  Similar to  $J_5$.\\
	\textbf{Estimate for $J_9$:} 
	\begin{eqnarray*}
		|J_9|&=&   18 \al^2(\al+ 3) \eps^6 \bigg|  \int \partial_{\xi}^6 \Phi \cdot \partial^2_{\xi} \bigg[\f{  (\Phi_{\xi \xi})^3 (\Phi_{\xi})^2 }{(1+ \eps^3 (\phi_{\xi})^2)^3} \bigg]  d\xi\bigg| \leq C \eps^6  \|\partial_{\xi}^6 \Phi\|_{L^2}  \|\partial_{\xi}^4 \Phi\|_{L^2}  \| \Phi_{\xi \xi}\|^2_{L^{\infty}} \| \Phi_{\xi}\|^2_{L^{\infty}} + G(\tau)  \\
		&\leq& C \eps^6  \|\partial_{\xi}^6 \Phi\|_{L^2}  \|\partial_{\xi}^4 \Phi\|_{L^2}  \| |\nabla|^{\f{5}{2}}\Phi\|^2_{L^{2}}  \| |\nabla|^{\f{3}{2}}\Phi\|^2_{L^{2}} + G(\tau) \\
		&\leq&  C \eps^{6}  \|\partial_{\xi}^6 \Phi\|_{L^2}    \|\partial_{\xi}^4 \Phi\|_{L^2}  \bigg(\|\Phi\|^{\f{3}{8}}_{L^2} \|\partial_{\xi}^4 \Phi\|^{\f{5}{8}}_{L^2} \bigg)^2 \bigg(\|\Phi\|^{\f{5}{8}}_{L^2} \|\partial_{\xi}^4 \Phi\|^{\f{3}{8}}_{L^2} \bigg)^2+ G(\tau)\\
 &\leq& \f{C_0 \al^2(\al+ 3)}{100}   \|\partial_{\xi}^6 \Phi\|^2_{L^2}+ \eps^{12}   \bigg(\|\Phi\|^{10}_{L^2}+ \|\partial_{\xi}^4 \Phi\|^{10}_{L^2} \bigg)+ G(\tau).
\end{eqnarray*}
Therefore, we can summarize the energy estimates \eqref{en11} in the following form
\begin{eqnarray}\label{gr1}
\f{1}{2} \partial_t \| \partial_{\xi}^4 \Phi\|^2_{L^2} \leq C \bigg( \|\Phi\|^2_{L^2}+ \|\partial_{\xi}^4 \Phi\|^2_{L^2}  \bigg)+ \bigg( \|\Phi\|^4_{L^2}+ \|\partial_{\xi}^4 \Phi\|^4_{L^2}  \bigg)+  \eps^{12} \bigg( \|\Phi\|^{10}_{L^2}+ \|\partial_{\xi}^4 \Phi\|^{10}_{L^2}  \bigg)+ G(\tau).
\end{eqnarray}
At this point we combine both energy estimates \eqref{gr0} and \eqref{gr1}
\begin{eqnarray*}
\frac{1}{2} \partial_t  \bigg(\|\Phi\|_{L^2}^2+ \|\partial_{\xi}^4 \Phi\|_{L^2}^2\bigg) \leq  C \bigg( \|\Phi\|^2_{L^2}+ \|\partial_{\xi}^4 \Phi\|^2_{L^2}  \bigg)+ C \bigg( \|\Phi\|^4_{L^2}+ \|\partial_{\xi}^4 \Phi\|^4_{L^2}  \bigg)+ C \eps^{12} \bigg( \|\Phi\|^{10}_{L^2}+ \|\partial_{\xi}^4 \Phi\|^{10}_{L^2}  \bigg)+ G(\tau).
\end{eqnarray*}
	Note that $G(\tau)$ is also bounded by a combination of the terms in the form of $\eps^a \bigg( \|\Phi\|_{L^2}^b+ \|\partial_{\xi}^4 \Phi\|_{L^2}^b\bigg)$. We define $E(t)= \|\Phi\|_{L^2}^2+ \|\partial_{\xi}^4 \Phi\|_{L^2}^2$. Then this inequality is clearly in the form of the Gronwall's inequality in Lemma \ref{grr}, and it finishes the proof.
\end{proof}

\subsection{Asymptotics} 
In this section we show that the solutions of the scaled equations \eqref{UU} and \eqref{Phi0} stay close up to a time $\tau_*$.  In the previous section established the existence of the solution of the equation \eqref{Phi0} in $H^4$ on a time interval $[0, \tau_*]$, under some restrictions.  We also recall an important  result of the global boundedness of the function $U(\xi, \tau)$ in any Sobolev spaces. This result is proved by Tadmor \cite{TA}. 
\begin{lem}\label{tad}
	The (KS) equation \eqref{UU} with the initial value $U_0 \in H^4$ admits a global smooth solution 
	 \begin{eqnarray}
	U(\xi, \tau) \in H^4.
	\end{eqnarray}
\end{lem}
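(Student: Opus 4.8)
The plan is to treat \eqref{UU} as a semilinear parabolic equation on the periodic interval, whose principal part $-4\partial_\xi^4$ is a negative self-adjoint operator generating an analytic semigroup $e^{-4\tau\partial_\xi^4}$ on $L^2$, while the transport nonlinearity $\tfrac12(\partial_\xi U)^2$ and the destabilizing term $\partial_\xi^2 U$ are subordinate perturbations. First I would establish local well-posedness in $H^4$ through the Duhamel representation
\begin{equation*}
U(\tau)= e^{-4\tau \partial_\xi^4} U_0 - \int_0^\tau e^{-4(\tau- s) \partial_\xi^4} \Big[ \tfrac{1}{2} (\partial_\xi U)^2 + \partial_\xi^2 U \Big](s)\, ds,
\end{equation*}
running a contraction argument in $C([0,T],H^4)$. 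The smoothing bounds $\norm{\partial_\xi^{j} e^{-4\tau \partial_\xi^4} f}_{L^2} \leq C \tau^{-j/4} \norm{f}_{L^2}$ (immediate on the Fourier side from $\sup_{k}|k|^{j}e^{-4\tau k^{4}}\lesssim \tau^{-j/4}$) absorb the derivative loss in both perturbative terms, since the resulting negative powers of $\tau$ are integrable; parabolic regularization then upgrades the fixed point to a function smooth for $\tau>0$. Thus global existence is equivalent to an a priori bound preventing the $H^4$ norm from blowing up in finite time.

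The entire difficulty reduces to a uniform-in-time bound on $\norm{\partial_\xi U}_{L^2}$. Setting $u=\partial_\xi U$, which has mean zero by periodicity and solves the derivative form
\begin{equation*}
u_\tau + u u_\xi + u_{\xi\xi} + 4 u_{\xi\xi\xi\xi} = 0,
\end{equation*}
testing against $u$ and integrating by parts yields the identity
\begin{equation*}
\tfrac{1}{2} \f{d}{d\tau} \norm{u}_{L^2}^2 + 4 \norm{u_{\xi\xi}}_{L^2}^2 = \norm{u_\xi}_{L^2}^2,
\end{equation*}
where the transport term contributes nothing because $\int u^2 u_\xi\,d\xi = \tfrac13\int\partial_\xi(u^3)\,d\xi = 0$. \emph{The hard part is exactly this estimate:} the conservative nonlinearity drops out, so the sole source of growth is the destabilizing $+\norm{u_\xi}_{L^2}^2$ on the right, and for mean-zero $u$ this term cannot be absorbed by $4\norm{u_{\xi\xi}}_{L^2}^2$ — reflecting the genuine linear instability of the zero state. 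The naive energy estimate therefore does not close, and this is the main obstacle.

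To overcome it I would invoke the structure special to one spatial dimension (precisely the $L^2$ control of $\partial_\xi U$ singled out in the Introduction), following Tadmor \cite{TA}: rather than testing against $u$, one tests the derivative equation against $u-\phi$ for a carefully constructed bounded background profile $\phi$, and uses the Galilean and translation invariance of the equation to place the solution in a gauge in which a Lyapunov-type functional is dissipated. This produces an absorbing ball for $\norm{\partial_\xi U}_{L^2}$, hence a global bound independent of $\tau$. With $\norm{\partial_\xi U}_{L^2}$ controlled, the remaining step is routine bootstrapping: higher-order energy estimates obtained by testing against $\partial_\xi^{2j} u$ close because the fourth-order dissipation $4\norm{\partial_\xi^{j+2}u}_{L^2}^2$ dominates the lower-order destabilizing term and the quadratic contributions (handled by Gagliardo–Nirenberg interpolation together with the already-established low norm), yielding uniform $H^4$ bounds. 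Combining the local theory with these global bounds through the standard continuation argument gives the global smooth solution $U\in H^4$ asserted in the lemma.
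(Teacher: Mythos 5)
First, a point of comparison: the paper itself does not prove Lemma \ref{tad} at all --- it is quoted from Tadmor \cite{TA} --- so your proposal has to be judged against the standard argument rather than against anything in the text. Your skeleton (Duhamel-based local well-posedness in $H^4$, an a priori bound, bootstrapping plus continuation) is the right one, and both your local theory and the energy identity $\tfrac12\tfrac{d}{d\tau}\|u\|_{L^2}^2+4\|u_{\xi\xi}\|_{L^2}^2=\|u_{\xi}\|_{L^2}^2$ for $u=\partial_\xi U$ are correct.

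The genuine gap is precisely the step you flag as ``the hard part.'' Your claim that the destabilizing term cannot be absorbed, so that ``the naive energy estimate therefore does not close,'' is false, and everything you substitute for it --- a ``carefully constructed bounded background profile $\phi$,'' a gauge fixed by Galilean invariance, a dissipated Lyapunov functional, an absorbing ball --- is left as an unexecuted black box. Concretely, integration by parts and Cauchy--Schwarz give
\begin{equation*}
\|u_\xi\|_{L^2}^2=-\int u\, u_{\xi\xi}\,d\xi\;\le\; \|u\|_{L^2}\,\|u_{\xi\xi}\|_{L^2}\;\le\; 4\|u_{\xi\xi}\|_{L^2}^2+\tfrac{1}{16}\|u\|_{L^2}^2,
\end{equation*}
so the identity closes to $\tfrac{d}{d\tau}\|u\|_{L^2}^2\le\tfrac18\|u\|_{L^2}^2$: the $L^2$ norm of $\partial_\xi U$ grows at most exponentially, which is all that global existence requires (and all that the paper uses, since Lemma \ref{PhiU} only needs bounds on the fixed interval $[0,\tau_*]$). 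In the paper's own setting of $2\pi$-periodic functions (integer wavenumbers $p\in\mathbb{Z}$) one even has $\|u_\xi\|_{L^2}^2\le\|u_{\xi\xi}\|_{L^2}^2$, so $\|\partial_\xi U\|_{L^2}$ is nonincreasing. The linear instability you invoke obstructs only \emph{uniform-in-time} bounds (absorbing balls), a strictly stronger statement than the lemma makes; that is the content of the Nicolaenko--Scheurer--Temam/Goodman background-flow arguments you are gesturing at, which are tied to periodic domains and are not what the well-posedness result cited here rests on. As written, then, the crux of your proof consists of a false assertion plus an appeal to unproven (and unnecessary) machinery. The repair is simple: replace that paragraph by the interpolation bound above; your higher-order estimates and continuation argument then go through exactly as you describe.
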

\begin{lem}\label{PhiU} Fix $\tau_*>0$ and $\Gamma_*>0$ and take $E_*$ and $\ep_*$ as in Lemma \ref{lem3}. Assume that $\| \Phi(0)\|_{H^4} \le E_*$ and $0<\ep <\ep_*$.
	Let $U(\xi, \tau)$ and $\Phi(\xi, \tau)$ be the solutions of the equations \eqref{UU} and \eqref{Phi0}  respectively, where we assume $\|U(0)- \Phi(0)\|_{L^2} \leq \eps$.
	Then
	\begin{eqnarray} \sup_{t \in [0,\tau_*]}
	\|\Phi(t)- U(t)\|_{L^2} \leq \Gamma_{**} \eps.
	\end{eqnarray}
	The constant $\Gamma_{**}>0$ does not depend on $\ep$.
\end{lem}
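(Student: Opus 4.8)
The plan is to control the difference $W:=\Phi-U$ through a single $L^2$ energy estimate and close it by a Gronwall argument on the fixed interval $[0,\tau_*]$, using the uniform $H^4$ bounds already in hand (Lemma \ref{lem3} for $\Phi$, Lemma \ref{tad} for $U$). First I would subtract \eqref{UU} from \eqref{Phi0}. The key structural observation is that the leading part of \eqref{Phi0} is exactly the Kuramoto--Sivashinsky operator up to $O(\eps)$ coefficients: with $\al=1+\eps$ one has $\al^2(\al+3)=4+O(\eps)$ and $(1+\eps^3\Phi_\xi^2)^{-k}=1+O(\eps^3)$, while the quadratic term satisfies $\f{\Phi_\xi^2}{1+\sqrt{1+\eps^3\Phi_\xi^2}}=\tfrac12\Phi_\xi^2+O(\eps^3)\Phi_\xi^2$. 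Subtracting therefore yields
\begin{equation*}
W_\tau+4W_{\xi\xi\xi\xi}+W_{\xi\xi}+\tfrac12(\Phi_\xi+U_\xi)W_\xi=R,
\end{equation*}
where $4W_{\xi\xi\xi\xi}+W_{\xi\xi}$ is the exact KS linear part, the term $\tfrac12(\Phi_\xi+U_\xi)W_\xi$ arises from $\tfrac12(\Phi_\xi^2-U_\xi^2)$, and $R$ collects all remaining contributions. Every summand of $R$ either carries an explicit factor $\eps^a$ with $a\ge1$ (all the nonlinear tails of \eqref{Phi0}) or is of the form (a coefficient that is $O(\eps)$)$\times$(a fixed, undifferentiated derivative of $\Phi$).

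Next I would multiply by $W$ and integrate, obtaining
\begin{equation*}
\f12\partial_\tau\|W\|_{L^2}^2+4\|W_{\xi\xi}\|_{L^2}^2=\|W_\xi\|_{L^2}^2-\f12\int(\Phi_\xi+U_\xi)W_\xi W\,d\xi+\int R\,W\,d\xi.
\end{equation*}
The destabilizing term is absorbed by the dissipation through the interpolation $\|W_\xi\|_{L^2}^2\le\|W\|_{L^2}\|W_{\xi\xi}\|_{L^2}\le\|W_{\xi\xi}\|_{L^2}^2+C\|W\|_{L^2}^2$. The transport term is handled by one integration by parts, $-\f12\int(\Phi_\xi+U_\xi)W_\xi W\,d\xi=\f14\int(\Phi_{\xi\xi}+U_{\xi\xi})W^2\,d\xi$, which is bounded by $C\|W\|_{L^2}^2$ since $\|\Phi_{\xi\xi}\|_{L^\infty}$ and $\|U_{\xi\xi}\|_{L^\infty}$ are controlled by the uniform $H^4$ bounds and the embedding $H^4\hookrightarrow W^{2,\infty}$. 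Finally $\int R\,W\,d\xi\le\|R\|_{L^2}\|W\|_{L^2}$.

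The crux is to verify $\|R\|_{L^2}\le C\eps$ with $C$ independent of $\eps$ and $\tau$. The two leading ($O(\eps)$) contributions are $\big(\al^2(\al+3)(1+\eps^3\Phi_\xi^2)^{-2}-4\big)\Phi_{\xi\xi\xi\xi}$ and the first-order term $\eps\big(1+\tfrac12\al^2\big)(\Phi_{\xi\xi})^2(1+\eps^3\Phi_\xi^2)^{-5/2}$; both are bounded in $L^2$ by $C\eps\|\Phi\|_{H^4}\le C\eps\Ga_*$, using $\|(\Phi_{\xi\xi})^2\|_{L^2}=\|\Phi_{\xi\xi}\|_{L^4}^2\le C\|\Phi\|_{H^4}^2$, and all remaining terms carry strictly higher powers of $\eps$. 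Plugging $\|R\|_{L^2}\le C\eps$ into the energy identity and applying Young's inequality gives $\partial_\tau\|W\|_{L^2}^2\le C\|W\|_{L^2}^2+C\eps^2$; since $\|W(0)\|_{L^2}\le\eps$, a standard (linear, inhomogeneous) Gronwall estimate over $[0,\tau_*]$ yields $\sup_{[0,\tau_*]}\|W\|_{L^2}\le\Gamma_{**}\eps$ with $\Gamma_{**}$ depending only on $\tau_*$, $\Ga_*$, and the embedding constants.

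I expect the main obstacle to be the bookkeeping that establishes $\|R\|_{L^2}\le C\eps$ uniformly in $\tau$: one must confirm that the only $L^2$-unbounded object, $\Phi_{\xi\xi\xi\xi}$, always appears multiplied by a scalar coefficient of size $O(\eps)$ and is never itself differentiated, so that no derivative is lost and the bare $H^4$ control from Lemma \ref{lem3} suffices. Once this is secured, the dissipation comfortably absorbs the destabilizing and transport terms and the finite-time Gronwall argument closes the estimate.
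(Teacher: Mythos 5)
Your proof is correct, and it follows the same overall strategy as the paper: form the equation for the difference $W=\Phi-U$, run a single $L^2$ energy estimate using the \emph{a priori} $H^4$ bounds of Lemma \ref{lem3} (for $\Phi$) and Lemma \ref{tad} (for $U$), verify the remainder is $O(\eps)$ in $L^2$, and close with a linear inhomogeneous Gronwall estimate on $[0,\tau_*]$ starting from $\|W(0)\|_{L^2}\leq\eps$. Where you diverge is the decomposition of the linear-in-$W$ part. The paper keeps the variable-coefficient operators on the left-hand side, namely $\f{v_{\xi\xi\xi\xi}}{(1+\eps^3(\Phi_{\xi})^2)^2}$, $\f{v_{\xi\xi}}{1+\eps^3(\Phi_{\xi})^2}$, and the transport term with its exact denominator; coercivity of the fourth-order term then requires the pointwise bound \eqref{b0}, and integrating it by parts generates commutator terms (the paper's $K_3$ and $K_4$, carrying $\eps^3$ and $\eps^6$ factors) that must be estimated separately. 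You instead freeze all coefficients at their $\eps=0$ values, so the left side is the exact constant-coefficient KS linearization $4W_{\xi\xi\xi\xi}+W_{\xi\xi}$ plus $\f{1}{2}(\Phi_\xi+U_\xi)W_\xi$, and every coefficient mismatch, including $\big(\al^2(\al+3)(1+\eps^3\Phi_\xi^2)^{-2}-4\big)\Phi_{\xi\xi\xi\xi}$, is pushed into the remainder $R$. This buys an exact dissipation identity $\int W\,W_{\xi\xi\xi\xi}\,d\xi=\|W_{\xi\xi}\|_{L^2}^2$ with no commutators and no need for \eqref{b0}, at the price of placing $\Phi_{\xi\xi\xi\xi}$ (controlled only in $L^2$) inside $R$; but the paper pays the same price, since its remainder $\eps F$ also contains $\eps(\eps+3)^2\Phi_{\xi\xi\xi\xi}(1+\eps^3\Phi_\xi^2)^{-2}$, so nothing is lost. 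Your bookkeeping check that $\|R\|_{L^2}\leq C\eps$ (an $O(\eps)$ scalar coefficient on the undifferentiated $\Phi_{\xi\xi\xi\xi}$, the $\eps(\Phi_{\xi\xi})^2$ term, all else $O(\eps^3)$) is precisely the content of the paper's assertion $\|F\|_{L^2}\leq C$, and the remaining steps --- absorbing $\|W_\xi\|_{L^2}^2$ into the dissipation by interpolation and Young, handling transport via $WW_\xi=\f{1}{2}\partial_\xi(W^2)$ together with the $L^\infty$ control of second derivatives, and the final Gronwall --- coincide with the paper's estimates $K_1$, $K_2$, $K_5$.
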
  

\begin{rem}\label{remproof} Lemma \ref{PhiU} 
leads directly to Theorem \ref{thm2}. Here is the calculation.
Recalling that $y(x,t) = \eps \Phi(\sqrt{\eps}x, \eps^2 t)-t$. 
we have
\begin{equation}
\begin{split}
\sup_{0< t \le \tau_0/\eps^2} \| y(\cdot,t) + t - \eps U(\sqrt{\eps} \cdot,\eps^2 t)\|_{L^2} &= 
\sup_{0< t \le \tau_0/\eps^2} \eps \| \Phi(\sqrt{\eps} \cdot,\eps^2 t) - U(\sqrt{\eps} \cdot,\eps^2 t)\|_{L^2} \\
&=\sup_{0< \tau \le \tau_0} \eps^{3/4}\| \Phi( \cdot,\tau) - U( \cdot,\tau)\|_{L^2}\\
&\le C \eps^{7/4}.
\end{split}
\end{equation}
This is the concluding estimate in Theorem \ref{thm2}.
In the above we used the change of variables relation $\| f(\alpha \cdot)\|_{L^2} = \alpha^{-1/2} \| f(\cdot)\|_{L^2}$
for $\alpha > 0$. 
\end{rem}

\begin{proof}
	From the equations \eqref{UU} and \eqref{Phi0} we construct  the equation for the quantity $v= \Phi- U$. Indeed, since for $\al= \eps+ 1$, $\al^2 (\al+ 3)= 4+ \eps (\eps+ 3)^2$, we have 
	\begin{eqnarray*}
		\partial_{\tau} (\Phi- U)&+& 4 \bigg[\f{\Phi_{\xi \xi \xi \xi}}{\bigg(1+ \eps^3 (\Phi_{ \xi })^2 \bigg)^2}-  U_{\xi \xi \xi \xi}\bigg]+ \f{1}{\eps^3} \bigg[ \sqrt{1+ \eps^3 (\Phi_{ \xi })^2}- 1- \f{\eps^3}{2} (U_{\xi})^2\bigg]+ (\al- 1) \bigg[\f{\Phi_{\xi \xi}}{1+ \eps^3 (\Phi_{ \xi })^2}-  U_{\xi \xi}\bigg]\\
		&=&   10 \al^2(\al+ 3) \f{\eps^3 \Phi_{\xi} \Phi_{\xi \xi} \Phi_{\xi \xi \xi}}{(1+ \eps^3 (\Phi_{\xi})^2)^3}+  3 \al^2(\al+ 3) \f{\eps^3 (\Phi_{\xi \xi})^3}{(1+ \eps^3 (\Phi_{\xi})^2)^3}-   18 \al^2(\al+ 3) \f{\eps^6 (\Phi_{\xi \xi})^3 (\Phi_{\xi})^2}{(1+ \eps^3 (\Phi_{\xi})^2)^4}\\
		&-& \bigg(1+ \f{1}{2} \al^2\bigg) \f{\eps (\Phi_{\xi \xi})^2}{(1+ \eps^3 (\Phi_{\xi})^2)^{\f{5}{2}}}- \bigg(2 \al+ 5 \al^2- \f{1}{3} \al^3\bigg)  \f{\eps^3 (\Phi_{\xi \xi})^3}{(1+ \eps^3 (\Phi_{\xi})^2)^4}- \eps (\eps+ 3)^2 \f{\Phi_{\xi \xi \xi \xi}}{\bigg(1+ \eps^3 (\Phi_{ \xi })^2 \bigg)^2}.
	\end{eqnarray*}
	Then we can simplify it in the following form
	\begin{eqnarray*}
		\partial_{\tau} v&+& 4 \bigg[\f{v_{\xi \xi \xi \xi}}{\bigg(1+ \eps^3 (\Phi_{ \xi })^2 \bigg)^2}\bigg]+ \bigg[\f{v_{\xi} \cdot (\Phi_{\xi}+ U_{\xi})}{\bigg( \sqrt{1+ \eps^3 (\Phi_{ \xi })^2}\bigg)+ \bigg( 1+ \f{\eps^3}{2} (U_{\xi})^2 \bigg)}\bigg]+ \bigg[\f{v_{\xi \xi}}{1+ \eps^3 (\Phi_{ \xi })^2}\bigg]\\
		&=& \al^2(\al+ 3) \bigg[ \f{2 \eps^3 (\Phi_{\xi})^2+ \eps^6 (\Phi_{ \xi })^4}{(1+ \eps^3 (\Phi_{ \xi })^2 )^2}\bigg] U_{\xi \xi \xi \xi }+  \f{1}{4} \cdot \f{\eps^3 (U_{\xi})^4}{\bigg( \sqrt{1+ \eps^3 (\Phi_{ \xi })^2}\bigg) +\bigg( 1+ \f{\eps^3}{2} (U_{\xi})^2 \bigg)}+ (\al- 1) \f{\eps^3 (\Phi_{ \xi })^2 U_{\xi \xi}}{1+ \eps^3 (\Phi_{ \xi })^2}\\
		&+& 10 \al^2(\al+ 3) \f{\eps^3 \Phi_{\xi} \Phi_{\xi \xi} \Phi_{\xi \xi \xi}}{(1+ \eps^3 (\Phi_{\xi})^2)^3}+  3 \al^2(\al+ 3) \f{\eps^3 (\Phi_{\xi \xi})^3}{(1+ \eps^3 (\Phi_{\xi})^2)^3}-   18 \al^2(\al+ 3) \f{\eps^6 (\Phi_{\xi \xi})^3 (\Phi_{\xi})^2}{(1+ \eps^3 (\Phi_{\xi})^2)^4}\\
		&-& \bigg(1+ \f{1}{2} \al^2\bigg) \f{\eps (\Phi_{\xi \xi})^2}{(1+ \eps^3 (\Phi_{\xi})^2)^{\f{5}{2}}}- \bigg(2 \al+ 5 \al^2- \f{1}{3} \al^3\bigg)  \f{\eps^3 (\Phi_{\xi \xi})^3}{(1+ \eps^3 (\Phi_{\xi})^2)^4} - \eps (\eps+ 3)^2 \f{\Phi_{\xi \xi \xi \xi}}{\bigg(1+ \eps^3 (\Phi_{ \xi })^2 \bigg)^2},
	\end{eqnarray*}
	with the initial condition $v(\xi, 0)= 0$. The presence of at least one $\eps$ in the right hand side of this relation, as well as the $H^4$ bounds for both $U(\xi, \tau)$ and $\Phi(\xi, \tau)$  makes the right hand side very convenient. For future calculations we give the right had side a name, say $\eps F(\xi, \tau)$. It is not very difficult to see that for any time $\tau \in [0, \tau_*]$ we have
	\begin{eqnarray}
	\|F\|_{L^2} \leq C.
	\end{eqnarray} 
	
	 Now we find the inner product of the above equation with $v$,
	\begin{eqnarray}\label{en8}
	\f{1}{2} \partial_{\tau} \|v\|^2_{L^2}&+&  \al^2(\al+ 3) \int v \cdot \bigg[\f{v_{\xi \xi \xi \xi}}{\bigg(1+ \eps^3 (\Phi_{ \xi })^2 \bigg)^2}\bigg] d\xi
	=-  \int v \cdot \bigg[\f{v_\xi \cdot (\Phi_{\xi}+ U_{\xi})}{\bigg(\sqrt{1+ \eps^3 (\Phi_{ \xi })^2}\bigg)+ \bigg( 1+ \f{\eps^3}{2} (U_{\xi})^2 \bigg)}\bigg] d\xi\\ \nonumber
	&-& \int v \cdot \bigg[\f{v_{\xi \xi}}{1+ \eps^3 (\Phi_{ \xi })^2}\bigg] d\xi+ \eps \int v \cdot F(\xi, t) d\xi.
	\end{eqnarray}
	Then, 
	\begin{eqnarray*}
		&& \int v \cdot \bigg[\f{v_{\xi \xi \xi \xi}}{\bigg(1+ \eps^3 (\Phi_{ \xi })^2 \bigg)^2}\bigg] d\xi= \int \f{(v_{\xi \xi})^2}{(1+ \eps^3 (\Phi_{ \xi })^2)^2} d\xi\\
		&&- 4 \eps^3 \int  v_{\xi \xi} \bigg[\f{ 2 v_{\xi} \Phi_{ \xi} \Phi_{ \xi \xi }+ v_{\xi}  \Phi_{ \xi} \Phi_{ \xi \xi }+ v  (\Phi_{ \xi \xi })^2+ v  \Phi_{ \xi} \Phi_{ \xi \xi \xi } }{(1+ \eps^3 (\Phi_{ \xi })^2)^3}\bigg] d\xi+  24 \eps^6 \int v_{\xi \xi} \bigg[\f{v (\Phi_{ \xi})^2 (\Phi_{ \xi \xi })^2 }{(1+ \eps^3 (\Phi_{ \xi })^2)^4}\bigg] d\xi.
	\end{eqnarray*}
Considering the relation \eqref{b0}, we can present a lower bound for the major part of this equality, i.e,
\begin{eqnarray}
\int \f{(v_{\xi \xi})^2}{(1+ \eps^3 (\Phi_{ \xi })^2)^2} d\xi \geq \|v_{\xi \xi}\|^2_{L^2}.
\end{eqnarray}
	Therefore the energy estimate \eqref{en8} turns into,
	\begin{eqnarray*}
		\f{1}{2} \partial_{\tau} \|v\|^2_{L^2}&+&  \al^2 (\al+ 3) \|v_{\xi \xi}\|^2_{L^2}
		= - \int v \cdot \bigg[\f{v_\xi \cdot (\Phi_{\xi}+ U_{\xi})}{\bigg( \sqrt{1+ \eps^3 (\Phi_{ \xi })^2}\bigg)+ \bigg( 1+ \f{\eps^3}{2} (U_{\xi})^2 \bigg)}\bigg] d\xi\\ \nonumber
		&-& \int v \cdot \bigg[\f{v_{\xi \xi}}{1+ \eps^3 (\Phi_{ \xi })^2}\bigg] d\xi+ 4 \al^3 (\al+ 3) \eps^3\int  v_{\xi \xi} \bigg[\f{ 2 v_{\xi} \Phi_{ \xi} \Phi_{ \xi \xi }+ v_{\xi}  \Phi_{ \xi} \Phi_{ \xi \xi }+ v  (\Phi_{ \xi \xi })^2+ v  \Phi_{ \xi} \Phi_{ \xi \xi \xi } }{(1+ \eps^3 (\Phi_{ \xi })^2)^3}\bigg] d\xi\\
		&+& 24 \al^3 (\al+ 3) \eps^6 \int  v_{\xi \xi} \bigg[ \f{v (\Phi_{ \xi})^2 (\Phi_{ \xi \xi })^2 }{(1+ \eps^3 (\Phi_{ \xi })^2)^4} \bigg] d\xi+  \eps \int v \cdot F(\xi, t) d\xi= K_1+ K_2+ K_3+ K_4+ K_5.
	\end{eqnarray*}
	Now we find proper bounds for the right hand side of this relation. \\
	{\bf Estimate for $K_1$:} Considering the relation $v v_{\xi}= \f{1}{2} \partial_{\xi} (v^2)$ we have
	\begin{eqnarray*}
		|K_1|&\leq& \bigg|\int v\cdot \bigg[\f{v_{\xi}  \cdot (\Phi_{\xi}+ U_{\xi})}{\bigg(1+ \sqrt{ \eps^3 (\Phi_{ \xi })^2}\bigg)+ \bigg( 1+ \f{\eps^3}{2} (U_{\xi})^2 \bigg)}\bigg] d\xi \bigg|= \bigg|\int v^2 \cdot \partial_{\xi} \bigg[\f{ (\Phi_{\xi}+ U_{\xi})}{\bigg(1+ \sqrt{ \eps^3 (\Phi_{ \xi })^2}\bigg)+ \bigg( 1+ \f{\eps^3}{2} (U_{\xi})^2 \bigg)}\bigg] d\xi \bigg|\\
		 &&\leq \|v\|_{L^2}^2 \bigg\| \f{\Phi_{\xi}+ U_{\xi}}{\bigg( \sqrt{1+ \eps^3 (\Phi_{ \xi })^2}\bigg)+ \bigg( 1+ \f{\eps^3}{2} (U_{\xi})^2 \bigg)}\bigg\|_{L^{\infty}} < C \|v\|_{L^2}^2.
	\end{eqnarray*}
{\bf Estimate for $K_2$:}
	\begin{eqnarray*}
		|K_2| \leq \bigg|\int v \cdot \bigg[\f{v_{\xi \xi}}{1+ \eps^3 (\Phi_{ \xi })^2}\bigg] d\xi \bigg| \leq C \|v\|_{L^2}^2+ \f{1}{100} \|v_{\xi \xi}\|^2_{L^2}.
	\end{eqnarray*}
{\bf Estimate for $K_3$:}
\begin{eqnarray*}
|K_3|  & \leq& C  \eps^3 \bigg| \int  v_{\xi \xi} \bigg[\f{ 2 v_{\xi} \Phi_{ \xi} \Phi_{ \xi \xi }+ v_{\xi}  \Phi_{ \xi} \Phi_{ \xi \xi }+ v  (\Phi_{ \xi \xi })^2+ v  \Phi_{ \xi} \Phi_{ \xi \xi \xi } }{(1+ \eps^3 (\Phi_{ \xi })^2)^3}\bigg] d\xi\bigg| \leq C \eps^3 \|v_{\xi \xi}\|_{L^2} (\|v\|_{L^2}+ \|v_{\xi}\|_{L^2})\\
& \leq&  \f{1}{100} \|v_{\xi \xi}\|^2_{L^2}+ C \eps^6 \|v\|^2_{L^2}+ C \eps^6.
\end{eqnarray*}
	Note that all the terms  $\partial_{\xi}^s \Phi$, $1 \leq s \leq 3$, are bounded (since $\Phi \in H^4$). \\
{\bf Estimate for $K_4$:}
\begin{eqnarray*}
	|K_4|   \leq C  \eps^6 \bigg| \int  v_{\xi \xi} \bigg[ \f{v (\Phi_{ \xi})^2 (\Phi_{ \xi \xi })^2 }{(1+ \eps^3 (\Phi_{ \xi })^2)^4} \bigg] d\xi\bigg| \leq C \eps^6 \|v_{\xi \xi}\|_{L^2} \|v\|_{L^2} \leq  \f{1}{100} \|v_{\xi \xi}\|^2_{L^2}+ C \eps^{12} \|v\|^2_{L^2}.
\end{eqnarray*}		
{\bf Estimate for $K_5$:}
\begin{eqnarray*}
	|K_5|  \leq C  \eps \bigg| \int v \cdot F(\xi, \tau) d\xi\bigg| \leq C \eps \| F(\cdot, \tau)\|_{L^2} \|v\|_{L^2} \leq  C  \|v\|^2_{L^2}+ C \eps^2.
\end{eqnarray*}		
	Overall, the energy estimate \eqref{en8} turns into 
	\begin{eqnarray*}
		\partial_{\tau} \|v\|^2_{L^2} + C \|v_{\xi \xi}\|_{L^2}^2 \leq C_1 \|v\|_{L^2}^2+ C_2 \eps^2,
	\end{eqnarray*}
	or 
	\begin{eqnarray*}
		\partial_{\tau} \|v\|^2_{L^2}  \leq C_1 \|v\|_{L^2}^2+ C_2 \eps^2.
	\end{eqnarray*}
	Then, , we take integral from both sides,
	\begin{eqnarray*}
		\|v\|^2_{L^2}  \leq e^{C_1 \tau} \|v(0)\|^2_{L^2}+  C_2 \eps^2 \int_0^{\tau} e^{C_0 (\tau- s)} ds=  e^{C_1 \tau} \|v(0)\|^2_{L^2}+ \f{C_2 \eps^2 }{C_1} \bigg[e^{C_1 \tau}- 1\bigg].
	\end{eqnarray*}
Finally, we restrict ourself to $\tau< \tau_0$, $\tau_0= O(1)$, as well as $\|v(0)\|_{L^2} \leq \eps$, and complete the proof. 
\end{proof}

\bibliographystyle{plain} 
\bibliography{ambroseHadadifardWright}{}
%
%
%
%
%
%
%
%
%

\end{document}